\allowdisplaybreaks \numberwithin{equation}{section}
\numberwithin{equation}{section}
\newtheorem{theorem}{Theorem}[section]
\newtheorem{corollary}[theorem]{Corollary}
\newtheorem{lemma}[theorem]{Lemma}
\newtheorem*{theoremA}{Theorem A}
\newtheorem*{theoremB}{Theorem B}
\newtheorem*{theoremC}{Theorem C}
\theoremstyle{definition}
\newtheorem{definition}[theorem]{Definition}
\theoremstyle{remark}
\newtheorem{remark}[theorem]{Remark}
\begin{document}

\title
[Desingularization of vortices for 2D steady Euler flows]{Desingularization of vortices for 2D steady Euler flows via the vorticity method}

 \author{Daomin Cao, Guodong Wang,  Weicheng Zhan}
\address{Institute of Applied Mathematics, Chinese Academy of Sciences, Beijing 100190, and University of Chinese Academy of Sciences, Beijing 100049,  P.R. China}
\email{dmcao@amt.ac.cn}
\address{Institute for Advanced Study in Mathematics, Harbin Institute of Technology, Harbin 150001, P.R. China}
\email{wangguodong14@mails.ucas.edu.cn}
\address{Institute of Applied Mathematics, Chinese Academy of Sciences, Beijing 100190, and University of Chinese Academy of Sciences, Beijing 100049,  P.R. China}
\email{zhanweicheng16@mails.ucas.ac.cn}


\begin{abstract}
In this paper, we consider steady Euler flows in a planar bounded domain in which the vorticity is sharply concentrated in a finite number of disjoint regions of small diameter. Such flows are closely related to the point vortex model and can be regarded as desingularization of point vortices. By an adaption of the vorticity method, we construct a family of steady Euler flows in which the vorticity is concentrated near a global minimum point of the Robin function of the domain,
 and the corresponding stream function satisfies a semilinear elliptic equation with a given profile function.
 Furthermore, for any given isolated minimum point $(\bar{x}_1,\cdot\cdot\cdot,\bar{x}_k)$ of the Kirchhoff-Routh function of the domain, we prove that there exists a family of steady Euler flows whose vorticity is supported in $k$ small regions near $\bar{x}_i$, and near each $\bar{x}_i$ the corresponding stream function satisfies a semilinear elliptic equation with a given profile function.
\end{abstract}

\maketitle

\section{Introduction and main results}

In this paper, we shall consider an incompressible inviscid fluid in two dimensions whose evolution is governed by the following Euler system
\begin{equation}\label{1-1}
\begin{cases}
\partial_t\mathbf{v}+(\mathbf{v}\cdot\nabla)\mathbf{v}=-\nabla P,&(x,t)\in D\times(0,+\infty),\\
\nabla\cdot\mathbf{v}=0,\\
\mathbf{v}\cdot\mathbf{n}=0,&(x,t)\in\partial D\times(0,+\infty),\\
\mathbf{v}(\cdot,0)=\mathbf{v}_0,&x\in D.
 \end{cases}
\end{equation}
Here $D\subset\mathbb R^2$ is a bounded and simply-connected domain with smooth boundary, $\mathbf{v}=(v^1,v^2)$ is the velocity field, $P$ is the scalar pressure and $\mathbf{n}$ is the outer unit normal to $\partial D$. The boundary condition $\mathbf{v}\cdot\mathbf{n}=0$ means that there is no matter flow through $\partial D$.

For a planar flow, the scalar vorticity is defined as the third component of the $curl$ of the velocity field, that is,
\[\omega:=\partial_{x^1}v^2-\partial_{x^2}v^1.\]
The evolution of vorticity is described by the following nonlinear transport equation
\begin{equation}\label{vor}
\partial_t\omega+\mathbf{v}\cdot\nabla\omega=0,\,\,(x,t)\in D\times(0,+\infty),
\end{equation}
which is usually called the vorticity equation. Besides, the velocity field can be recovered from the vorticity via the Biot-Savart law
\[\mathbf{v}=(\partial_{x^2}\mathcal{G}\omega,-\partial_{x^1}\mathcal{G}\omega),\,\,\mathcal{G}\omega(x)=\int_DG(x,y)\omega(y)dy,\]
where $G$ is the Green's function of $-\Delta$ in $D$ with zero Dirichlet data, which can be written as follows
\[G(x,y)=-\frac{1}{2\pi}\ln|x-y|-h(x,y),\,\,x,y\in D.\]
 In the sequel we will use $\mathbf {b}^\perp$ to denote the clockwise rotation through $\pi/2$ of some planar vector $\mathbf b$, and for some function $f$ we denote $\nabla^\perp f=(\nabla f)^\perp$ for simplicity. Therefore the vorticity equation can be written as follows
\begin{equation}\label{vorequ}
\partial_t\omega+\nabla^\perp\mathcal{G}\omega=0,\,\,(x,t)\in D\times(0,+\infty).
\end{equation}

If the initial vorticity is smooth, the global existence and uniqueness of smooth solutions of the vorticity equation go back to the work of H\"older \cite{Ho} and Wolibner \cite{Wo} in 1930s. For initial vorticity only in $L^\infty$, the global existence and uniqueness of weak solution are proved by Yudovich \cite{Y} in 1963.
To summarize Yudovich's result let us introduce some definitions first. Define the rearrangement class of $\theta$ by
 \begin{equation}
 \mathcal{R}({\theta}):=\{ v \in L^1_{loc}(D)\mid  |\{x\in D\mid v(x)>s\}|=|\{x\in D\mid \theta(x)>s\}| ,\forall s\in \mathbb R\},
 \end{equation}
 where $|\cdot|$ denotes the two-dimensional Lebesgue measure.

 The kinetic energy of the fluid is given by
 \[
 E(t):=\frac{1}{2}\int_D\int_DG(x,y)\omega(x,t)\omega(y,t)dxdy.
 \]

 The result of Yudovich \cite{Y} can be stated as follows.

 \begin{theoremA}\label{A}
 Let $\omega_0\in L^\infty(D)$. Then there exists a unique weak solution to the vorticity equation $\omega(x,t)\in L^\infty(D\times(0,+\infty))\cap C( [0,+\infty); L^p(D))$ for all $p\in[1,+\infty)$ satisfying
\begin{equation}\label{tweak}
  \int_D\omega_0(x)\xi(x,0)dx+\int_0^{+\infty}\int_D\omega(\partial_t\xi+\nabla\xi\cdot \nabla^\perp \mathcal G\omega)dxdt=0
  \end{equation}
for all $\xi\in C_c^{\infty}(D\times[0,+\infty))$.
 Moreover, this weak solution satisfies
\begin{itemize}
\item[(i)] $\omega(x,t)\in \mathcal{R}({\omega_0})$ for all $t\geq 0$;
\item[(ii)]  the kinetic energy of the fluid is conserved, that is,
\[E(t)=E(0),\,\,\forall\, t\in[0,+\infty).\]
\end{itemize}
 \end{theoremA}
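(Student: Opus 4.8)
The plan is to split the statement into the existence of a weak solution enjoying properties (i)--(ii) and its uniqueness, the latter being the genuine difficulty. The backbone of both parts is an a priori regularity estimate for the Biot--Savart operator: for $\omega\in L^\infty(D)$ the velocity $\mathbf v=\nabla^\perp\mathcal G\omega$ is log-Lipschitz,
\[
|\mathbf v(x)-\mathbf v(y)|\le C\|\omega\|_{L^\infty}\,\phi(|x-y|),\qquad \phi(r)=r\bigl(1-\ln r\bigr)\ \text{for small }r.
\]
This follows from the explicit form of the kernel $\nabla^\perp G(x,y)$, whose singular part behaves like $|x-y|^{-1}$, by splitting the convolution into near and far regions and estimating each separately; the harmonic remainder $h$ contributes a genuinely Lipschitz term. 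I would record alongside this the standard fact that, because $\mathbf v$ is divergence-free and tangent to $\partial D$, transport along its flow preserves every $L^p$ norm of the vorticity, so that in particular any solution obeys $\|\omega(\cdot,t)\|_{L^\infty}\le\|\omega_0\|_{L^\infty}$.

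For existence I would argue by approximation. Mollify $\omega_0$ to obtain smooth data $\omega_0^\ep\to\omega_0$ in every $L^p$, $p<\infty$, with $L^\infty$ norms under control, and invoke the classical H\"older--Wolibner theory to produce smooth global solutions $\omega^\ep$. The uniform bounds above, together with the uniform log-Lipschitz modulus of the velocities $\mathbf v^\ep$, give equicontinuity of the associated flow maps in space and time. A compactness argument (Arzel\`a--Ascoli for the flows and weak-$*$ compactness for $\omega^\ep$ in $L^\infty$, upgraded to strong $L^p_{loc}$ convergence of the velocities via the compact embedding $W^{1,p}\hookrightarrow L^p$ and an Aubin--Lions bound on $\partial_t\omega^\ep$) lets me pass to the limit in the bilinear term $\omega^\ep\,\nabla^\perp\mathcal G\omega^\ep$ and recover the weak formulation \eqref{tweak}. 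Conservation of the rearrangement class (i) and of the kinetic energy (ii) hold along each smooth approximant because the flow is measure-preserving and the energy is a transported quantity, and both properties survive the limit; continuity in time with values in $L^p$ comes from the transport representation $\omega(\cdot,t)=\omega_0\circ X(t,\cdot)^{-1}$.

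The hard part is uniqueness, where the mere log-Lipschitz (not Lipschitz) regularity of $\mathbf v$ rules out a naive Gronwall argument. Given two solutions $\omega_1,\omega_2$ sharing the initial datum, I would compare their flow maps $X_1,X_2$ and set $\rho(t)=\int_D|X_1(t,x)-X_2(t,x)|\,dx$. Differentiating and inserting the Biot--Savart law, the log-Lipschitz estimate combined with Jensen's inequality (using concavity of $\phi$) and an $L^1$ bound on $\mathbf v_1-\mathbf v_2$ in terms of $\rho$ yields a differential inequality of the form $\rho'(t)\le C\,\phi\bigl(\rho(t)\bigr)$ with $\rho(0)=0$. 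Since $\phi$ satisfies the Osgood condition $\int_0^1\phi(r)^{-1}\,dr=+\infty$, Osgood's uniqueness lemma forces $\rho\equiv0$, hence $\omega_1=\omega_2$. An equivalent route is Yudovich's original $L^p$ energy estimate: bound $\|\mathbf v_1-\mathbf v_2\|_{L^{2p}}$ using that the Calder\'on--Zygmund constant of the Biot--Savart operator on $L^{2p}$ grows linearly in $p$, derive a differential inequality for $\|\omega_1-\omega_2\|_{L^{2p}}$, and let $p\to\infty$. The delicate points in either approach are controlling the coupling between the transport estimate and the singular integral operator, and verifying the Osgood condition so that the vanishing-initial-data inequality genuinely propagates $\rho\equiv0$.
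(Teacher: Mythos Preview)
The paper does not supply its own proof of Theorem~A: it is quoted as background, with the reader referred to Burton~\cite{B5} and Majda--Bertozzi~\cite{MB} for details. Your outline is precisely the classical Yudovich scheme found in those references---log-Lipschitz regularity of the Biot--Savart velocity, existence by smooth approximation and compactness, and uniqueness via either the Osgood/flow-map comparison (cf.\ Marchioro--Pulvirenti~\cite{MPu}) or Yudovich's $L^p$ energy estimate exploiting the linear-in-$p$ growth of the Calder\'on--Zygmund constant. So there is nothing to contrast: you are sketching the very proof the paper cites.

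One caution on your first uniqueness route. The step ``an $L^1$ bound on $\mathbf v_1-\mathbf v_2$ in terms of $\rho$'' cannot be obtained by estimating $\|\omega_1-\omega_2\|_{L^1}$ directly, since $\omega_0\in L^\infty$ need not be continuous and $\omega_i(\cdot,t)=\omega_0\circ X_i(t,\cdot)^{-1}$ gives no pointwise control of the difference. The correct manoeuvre is to write $\mathbf v_i(z)=\int \nabla_z^\perp G\bigl(z,X_i(t,a)\bigr)\,\omega_0(a)\,da$ via the measure-preserving change of variables, subtract, and use the quasi-Lipschitz modulus of the kernel in its second variable \emph{after} integrating in $z$. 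If you pursue that route, this is where the real work lies; the $L^p$ energy method avoids the issue and is the cleaner choice in a bounded domain.
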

 The detailed proof of Theorem A can be found in Burton \cite{B5} or Majda-Bertozzi \cite{MB}.

 Although the well-posedness of the 2D Euler equation has already been solved, there are many other interesting open problems, especially those in vortex dynamics, that are both challenging in mathematics and meaningful in physics. In this paper, we will be concerned with one of them, that is, the possible equilibria of steady Euler flows with concentrated vorticity.

Many natural phenomena exhibit a strong eddylike motion in a finite number of small regions while being irrotational elsewhere. To deal with such a problem mathematically, we need to consider the Euler evolution of sufficiently concentrated vorticity.
 To simplify the problem, we first assume that the vorticity is a delta measure(called a point vortex) at $x(t)$ with unit strength, that is, $\omega(\cdot,t)=\delta(x(t))$. Then at time $t$ the velocity field induced by this point vortex is
\[\mathbf{v}(\cdot,t)=\nabla_x^\perp G(x,x(t))=-\frac{1}{2\pi}\frac{(\cdot-x(t))^\perp}{|\cdot-x(t)|^2}-\nabla_x^\perp h(\cdot,x(t)),\]
where $h$ is the regular part of the Green's function.
Intuitively by symmetry the term $-\frac{1}{2\pi}\frac{(\cdot-x(t))^\perp}{|\cdot-x(t)|^2}$ does not influence the motion of the fluid particle at $x(t)$. If we drop this term, we get the equation of $x(t)$
\begin{equation*}
\frac{dx(t)}{dt}=-\nabla_x^\perp h(\cdot,x(t)).
\end{equation*}
 Similarly, if the vorticity is a sum of $k$ delta measures at $x_1(t),\cdot\cdot\cdot,x_k(t)$ with strength $\kappa_1,\cdot\cdot\cdot,\kappa_k$, then the evolution of $x_i(t)$ is described by the following system
\begin{equation}\label{kr}
\frac{dx_i(t)}{dt}=-\kappa_i\nabla_x^\perp h(x_i(t),x_i(t))+ \sum_{j=1,j\neq i}^k\kappa_j\nabla_x^\perp G(x_j(t),x_i(t)),\;i=1,\cdot\cdot\cdot,k.
\end{equation}
System \eqref{kr} is called the point vortex model or the Kirchhoff-Routh model, which can be regarded as the singular limit of the Euler equation when the vorticity is concentrated.
It is a Hamiltonian system with the following Kirchhoff-Routh function as the Hamiltonian
\begin{equation}\label{krf}
{\mathcal W}_k(x_1,\cdot\cdot\cdot,x_k):=- \sum_{i\neq j, 1\leq i,j\leq k}\kappa_i\kappa_jG(x_i,x_j)+ \sum_{i=1}^k\kappa_i^2h(x_i,x_i),
\end{equation}
where $x_i\in D$ and $x_i\neq x_j$ if $i\neq j$.
Note that for a single vortex with unit strength(that is, $k=1$ and $\kappa=1$), the Kirchhoff-Routh function reduces to the Robin function $H(x):=h(x,x)$. We refer the interested readers to Lin \cite{L} or Marchioro--Pulvirenti \cite{MPu} for a detailed discussion.
Note that when we deduce the point vortex model from the Euler equation, we drop the self-interaction for each point vortex, which is just not rigorous. A natural question is whether we can give the mathematical justification of the point vortex model. More precisely, if the initial vorticity is concentrated near $k$ different points $x_1(0),\cdot\cdot\cdot,x_k(0)$, we ask whether the evolved vorticity remains concentrated near $k$ points $x_1(t),\cdot\cdot\cdot,x_k(t)$, and whether these $k$ points satisfy the point vortex model. Such a problem is called desingularization of point vortices. By now there are many results in the literature dealing with the problem. See Marchiror \cite{M}, Marchioro--Pulvirenti \cite{MP1,MP2}, Turkington \cite{T3} for example.

Another parallel problem is the desingularization of steady state of the point vortex model, which is exactly what we will be focusing on in this paper. More precisely, for any given equilibrium state of the point vortex model, or equivalently a critical point of the Kirchhoff-Routh function, say $(\bar{x}_1,\cdot\cdot\cdot,\bar{x}_k)$, we aim to construct a family of steady solutions of the Euler equation such that the support of the vorticity is supported in $k$ small regions near $\bar{x}_i$ with circulation $\kappa_i$ and ``shrinks" to $\bar{x}_i$ as the parameter changes.

For a steady Euler flow, the vorticity satisfies the following equation
\begin{align}\label{sv}
  \nabla^\perp \mathcal{G}\omega\cdot\nabla\omega=0,\,\,x\in D,
\end{align}
which means that $\nabla\omega$ and $\nabla\mathcal{G}\omega$ are collinear at each point. For $\omega\in L^\infty(D)$, by \eqref{tweak} we have the following definition of weak solution to \eqref{sv}.
\begin{definition}\label{weak}
Let $\omega\in L^{\infty}(D)$. Then $\omega$ is called a weak solution to \eqref{sv} if it satisfies
\begin{align}\label{int}
  \int_D\omega(x)\nabla^\perp\mathcal{G}\omega(x)\cdot\nabla \phi(x)dx=0,\,\,\forall \phi\in C_c^\infty(D).
\end{align}
\end{definition}
Note that for $\omega\in L^{\infty}(D)$, by elliptic regularity theory and Sobolev embedding we have $\mathcal{G}\omega\in C^{1}(\overline{D})$, therefore the integral in \eqref{int} makes sense.

In the past decades, many efforts have been devoted to establishing possible equilibria of Euler flows. Roughly speaking, there are mainly two methods to deal with this problem. The first one is the vorticity method, which was first established by Arnold \cite{A}(see also Arnold--Khesin \cite{AK}) and later developed by many authors. See for example Badiani \cite{Ba}, Burton \cite{B2,B3}, Elcrat--Miller \cite{EM}, Eydeland--Turkington \cite{ET} and Turkington \cite{T,T2}. To explain the vorticity method, we begin with a brief description of Turkington's method in \cite{T}, where steady vortex patch solutions of desingularization type were constructed. Based on Arnold's idea, Turkington considered maximization of the kinetic energy $E(\omega)$ over the admissible class
\[\mathcal{M}^\lambda:=\{\omega\in L^\infty(D)\mid 0\leq\omega\leq \lambda \mbox{ a.e. in }D,\int_D\omega(x)dx=1\}.\]
Here $\lambda$ is a large positive number. Turkington proved that $E$ attains its maximum over $\mathcal{M}^\lambda$ and each maximizer $\omega^\lambda$ must be a steady solution to the vorticity equation with the form
\[\omega^\lambda=\lambda \chi_{_{\{x\in D\mid\mathcal{G}\omega(x)>\mu^\lambda\}}},\]
where $\chi$ denotes the characteristic function and $\mu^\lambda$ is the Lagrange multiplier depending on $\lambda$. Moreover, as $\lambda$ goes to infinity, the support of $\omega^\lambda$ ``shrinks" to a global minimum point of the Robin function of the domain. Later Burton \cite{B2,B3} generalized Turkington's result by replacing $\mathcal{M}^\lambda$ by a more general admissible class. More precisely, Burton proved that the kinetic energy $E$ attains its maximum value on any rearrangement class of a given $L^p$ function, and any maximizer must be a steady solution of the vorticity equation with the form
\begin{align}\label{ome}
  \omega=f(\mathcal{G}\omega),
\end{align}
where the profile function $f$ is an unknown nondecreasing function. As an application of Burton's theory, Elcrat--Miller \cite{EM} proved existence of steady Euler flows with vorticity concentrated in a finite number of small regions, and in each small region the vorticity also satisfies \eqref{ome}
for some unknown nondecreasing function $f$.

The vorticity method is a very efficient way to construct steady Euler flows. However, the fact that the profile function $f$ is unknown is somewhat annoying. In many problems, we need to know what $f$ is to give a better description of the steady flow, such as nonlinear stability. By Burton \cite{B5}, if we are able to prove that the maximizer is isolated over the rearrangement class, then the flow must be nonlinearly stable. However, isolatedness of the maximizer from the viewpoint of vorticity is usually hard to verify except for several special cases(for the case $D$ is a disc and the maximizer is a circular patch, isolatedness is proved in \cite{WP}). If we know what $f$ is, we can achieve this by analyzing uniqueness of solution to the elliptic equation satisfied by the stream function. For vortex patch solutions, local uniqueness of the corresponding elliptic problem was proved by Cao--Guo--Peng--Yan \cite{CGPY}, and then was used to prove nonlinear stability of concentrated steady vortex patches by Cao--Wang \cite{CW}.

Another way to construct steady Euler flows is to solve directly the following semilinear elliptic problem with Dirichlet condition for the stream function, which is usually called the stream function method
\begin{align}\label{semilinear}
\begin{cases}
-\Delta\psi=f(\psi),&x\in D,\\
\psi=0,&x\in\partial D.
\end{cases}
\end{align}
It is easy to check that if $f$ is locally Lipschitz, then $\mathbf{v}=\nabla^\perp\psi$ is a steady solution of the Euler equation with $P=\int_0^{\psi}f(r)dr-\frac{1}{2}|\nabla\psi|^2$. More generally, we have
\begin{theoremB}[Cao--Wang, \cite{CW7}]
Let $k$ be a positive integer.
Suppose that $\omega\in L^{\infty}(D)$ satisfies
\begin{equation}\label{form}
\omega=\sum_{i=1}^k\omega_i, \,\,\min_{1\leq i< j\leq k}\{dist(supp(\omega_i),supp(\omega_j))\}>0,\,\,\omega_i=f_i(G\omega) \text{ a.e. in } supp(\omega_i)_\delta
\end{equation}
for some $\delta>0$, where $supp(\cdot)$ denotes the essential support of some measurable function and
 \[supp(\omega_i)_\delta=\{x\in D\mid \mbox{dist}(x,supp(\omega_i))<\delta\},\]
 and each $f_i:\mathbb R\to\mathbb R$ is either monotone or locally Lipschitz continuous,
then $\omega$ is a weak solution to the steady vorticity equation \eqref{sv}.
\end{theoremB}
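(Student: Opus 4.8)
The plan is to verify the weak formulation \eqref{int} directly, exploiting the fact that on a neighborhood of each $\mathrm{supp}(\omega_i)$ the vorticity is a function of the stream function, $\omega_i=f_i(\mathcal{G}\omega)$, which should force the field $\omega_i\nabla^\perp\mathcal{G}\omega$ to be, locally, a perpendicular gradient and hence weakly divergence-free. Write $\psi:=\mathcal{G}\omega$; as remarked after Definition \ref{weak}, $\psi\in C^1(\overline{D})$. Expanding $\omega=\sum_{i=1}^k\omega_i$, and using that the supports $S_i:=\mathrm{supp}(\omega_i)$ are pairwise separated, it suffices to fix $\phi\in C_c^\infty(D)$ and prove $\int_D\omega_i\nabla^\perp\psi\cdot\nabla\phi\,dx=0$ for each $i$.

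The central device is to pass to a primitive $F_i(t):=\int_0^t f_i(s)\,ds$, so that formally $\nabla^\perp F_i(\psi)=f_i(\psi)\nabla^\perp\psi$. The advantage is that $\mathrm{div}\,\nabla^\perp(\cdot)=0$ holds distributionally, so $\int_D\nabla^\perp F_i(\psi)\cdot\nabla\chi\,dx=0$ for every $\chi\in C_c^\infty(D)$ as soon as $F_i(\psi)\in W^{1,1}_{loc}(D)$. Thus the one genuinely analytic step is the chain rule $\nabla F_i(\psi)=f_i(\psi)\nabla\psi$ a.e. When $f_i$ is locally Lipschitz it is continuous, so $F_i\in C^1$ and the identity is the classical chain rule for $C^1\circ C^1$. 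When $f_i$ is monotone, $F_i$ is convex, hence locally Lipschitz, and differentiable off the at most countable jump set $\Sigma$ of $f_i$; here I would use the Lipschitz chain rule together with the measure-theoretic fact that $\nabla\psi=0$ a.e.\ on each level set $\{\psi=c\}$. Since $\{x:\psi(x)\in\Sigma\}=\bigcup_{c\in\Sigma}\{\psi=c\}$ carries $\nabla\psi=0$ a.e., the ambiguity of $f_i$ at its jumps is harmless and the identity holds a.e.

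To localize to $S_i$, set $K:=\mathrm{supp}(\phi)$ and choose a cutoff $\eta_i\in C_c^\infty(U_i)$, where $U_i:=\mathrm{supp}(\omega_i)_\delta$, with $\eta_i\equiv 1$ on a neighborhood $V_i$ of the compact set $S_i\cap K$; this is possible since $S_i\cap K\subset\subset U_i$. Because the integrand of $\int_D\omega_i\nabla^\perp\psi\cdot\nabla\phi$ is supported in $S_i\cap K$, and $\omega_i=f_i(\psi)$ on $U_i$, I may replace $\omega_i$ by $\eta_i f_i(\psi)=\eta_i\nabla^\perp F_i(\psi)$ without changing the integral. Writing $\eta_i\nabla\phi=\nabla(\eta_i\phi)-\phi\nabla\eta_i$ and using $\int_D\nabla^\perp F_i(\psi)\cdot\nabla(\eta_i\phi)\,dx=0$ (valid since $\eta_i\phi\in C_c^\infty(D)$) reduces the claim to showing that the residual $\int_D\phi\,f_i(\psi)\,\nabla^\perp\psi\cdot\nabla\eta_i\,dx$ vanishes.

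This residual vanishes for a structural reason: $\nabla\eta_i$ is supported in $U_i\setminus V_i$ and $\phi$ in $K$, so on the support of $\phi\,\nabla\eta_i$ one lies in $K\cap(U_i\setminus V_i)$, which is disjoint from $S_i$ (a point of $S_i\cap K$ would lie in $V_i$). There $\omega_i=0$ a.e., hence $f_i(\psi)=0$ a.e., and the residual integrand vanishes a.e. Therefore $\int_D\omega_i\nabla^\perp\psi\cdot\nabla\phi\,dx=0$, and summing over $i$ gives \eqref{int}. I expect the main obstacle to be the rigorous chain rule in the monotone case, namely controlling $F_i(\psi)$ across the jump set of $f_i$; this is exactly what the level-set degeneracy $\nabla\psi=0$ on $\{\psi=c\}$ resolves, after which the separation of the $S_i$ and the compact support of $\phi$ render the cutoff reduction and the cancellation of the residual routine.
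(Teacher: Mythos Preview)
The paper does not contain a proof of Theorem~B; it is quoted from \cite{CW7} and merely invoked in the proofs of Theorems~\ref{single} and~\ref{multiple}. So there is nothing in the present paper to compare your argument against.

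Your proposal is correct and is the natural approach. The key points --- introducing the primitive $F_i$ so that $f_i(\psi)\nabla^\perp\psi=\nabla^\perp F_i(\psi)$, handling the chain rule for monotone $f_i$ via the fact that $\nabla\psi=0$ a.e.\ on each level set of $\psi$ and that the non-differentiability set of $F_i$ is countable, localizing with a cutoff $\eta_i$ equal to $1$ near $S_i\cap\mathrm{supp}\,\phi$, and observing that the commutator term vanishes because $f_i(\psi)=\omega_i=0$ a.e.\ on $U_i\setminus S_i$ --- are all sound. Two minor remarks: the sentence ``replace $\omega_i$ by $\eta_i f_i(\psi)=\eta_i\nabla^\perp F_i(\psi)$'' conflates a scalar with a vector; you mean to replace $\omega_i\nabla^\perp\psi$ by $\eta_i f_i(\psi)\nabla^\perp\psi=\eta_i\nabla^\perp F_i(\psi)$. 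Also, you should note explicitly that $\eta_i$ can be chosen with \emph{compact} support in $U_i$ because $S_i\cap\mathrm{supp}\,\phi$ is a compact subset of the open set $U_i\subset D$; this ensures $\eta_i\phi\in C_c^\infty(D)$, which you use. With these cosmetic fixes the argument is complete.
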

Here the definition of the essential support of a measurable function can be found in \S 1.5 in \cite{LL}.

Now we recall several results of desingularization that was based on the stream function method. In \cite{SV}, Smets--Schaftingen obtained steady Euler flows of the form \eqref{form} with $l=1$ and a $p$-power ($p>1$) nonlinearity by solving a constraint minimization problem for the stream function. Moreover, the support of the vorticity is concentrated near a minimum point of the Robin function. In \cite{CLW}, based on the reduction method, Cao--Liu--Wei generalized Smets--Schaftingen's result to general positive integer $l$ with the support of the vorticity concentrated near a given non-degenerate critical point of the Kirchhoff-Routh function.
In \cite{CPY}, still based on the reduction method, Cao--Peng--Yan constructed steady multiple vortex patch solutions (i.e., each $f_i$ is a Heaviside type function) with concentrated vorticity. For general $l$ and $p$-power nonlinearity with $p\in(0,1)$, the corresponding desingularization result was obtained by Cao--Peng--Yan in \cite{CPY2}.

The advantage of the stream function is that one can get more delicate estimate for the solutions. However, it is hard to characterize the energy level on rearrangement class from the viewpoint of vorticity which is essentially important to prove nonlinear stability.
For example, in \cite{SV} Smets--Schaftingen proved existence of the following elliptic problem for small $\varepsilon>0$
\begin{equation}\label{sve}
\begin{cases}
-\Delta u^\varepsilon=\frac{1}{\varepsilon^2}(u^\varepsilon-\mu^\varepsilon)_+^p,&x\in D,\\
u^\varepsilon=0,&x\in\partial D,\\
\int_D\frac{1}{\varepsilon^2}(u^\varepsilon-\mu^\varepsilon)_+^pdx=1+o(1),\\
supp((u^\varepsilon-\mu^\varepsilon)_+)\subset B_{o(1)}(\bar{x}).
\end{cases}
\end{equation}
where $1<p<+\infty$, $\mu^\varepsilon$ is a real number depending on $\varepsilon$, $\bar{x}$ is a global minimum point of the Robin function, and $o(1)\to0$ as $\varepsilon\to0.$ This is the desingularization of a single vortex. However, it is not clear whether the vorticity $\omega^\varepsilon=\frac{1}{\varepsilon^2}(u^\varepsilon-\mu^\varepsilon)_+^p$ is an energy maximizer over the rearrangement class $\mathcal R({\omega^\varepsilon})$.
Our aim in this paper is to modify the vorticity method to obtain steady vortex flows with an energy characterization, moreover, the corresponding stream function satisfies a semilinear elliptic problem with a given profile function.

Now we turn to the precise statement of our main results. For technical reasons we need to impose some conditions on the profile function.
Let $f:\mathbb R\to \mathbb R$ be a function. We make the following assumptions on $f$.
\begin{itemize}
\item[(H1)] $f$ is continuous, $f(s)=0$ for $s\leq 0$, and $f$ is strictly increasing in $[0,+\infty)$.
\item[(H2)] There exists $\delta_0\in(0,1)$ such that
\[\int_0^sf(r)dr\leq \delta_0f(s)s,\,\,\forall\,s\geq0.\]
\item[(H3)] For all $\tau>0,$
\[\lim_{s\to+\infty}f(s)e^{-\tau s}=0.\]
\end{itemize}

Note that assumption (H2) implies $\lim_{s\to +\infty}f(s)=+\infty$. By using the identity $\int_0^sf(r)dr+\int_0^{f(s)}f^{-1}(r)dr=sf(s)$ for all $s\geq0,$ one can easily check that (H2) is in fact equivalent to
\begin{itemize}
 \item[ (H2)$'$] There exists $\delta_1\in(0,1)$ such that
\[F(s) \ge \delta_1 s f^{-1}(s),\,\,\forall\,s\geq0,\]
where $f^{-1}$ is defined as the inverse function of $f$ in $[0,+\infty)$ and $f^{-1}\equiv0$ in $(-\infty,0]$, and $F(s)=\int_0^sf^{-1}(r)dr$.
\end{itemize}

Note that many profile functions that frequently appear in nonlinear elliptic equations satisfy (H1)--(H3), for example $f(s)=s_+^p$ with $p\in(0,+\infty)$.

Our first result is about the desingularization of a single point vortex.
\begin{theorem}\label{single}
Let $f$ be a real function satisfying (H1)--(H3) and $\kappa$ be a fixed positive number. Then there exists $\varepsilon_0>0$ such that for any $\varepsilon\in(0,\varepsilon_0),$ there exists a solution $\omega^\varepsilon$ to \eqref{sv} having the form
\[\omega^\varepsilon=\frac{1}{\varepsilon^2}f(\mathcal{G}\omega^\varepsilon-\mu^\varepsilon), \,\,\int_D\omega^\varepsilon(x) dx=\kappa,\]
where $\mu^\varepsilon$ is a real number depending on $\varepsilon$ satisfying $\mu^\varepsilon=-\frac{\kappa}{2\pi}\ln\varepsilon+O(1)$ as $\varepsilon\to0^+$,
and the support of $\omega^\varepsilon$ shrinks to some point $\bar{x}\in D$, which is a global minimum point of the Robin function, that is,
\[\mbox{supp}(\omega^\varepsilon)\subset B_{o(1)}(\bar{x})\]
as $\varepsilon$ goes to zero. Moreover, $\omega^\varepsilon$ is a maximizer of the kinetic energy over $\mathcal R({\omega^\varepsilon})$.
\end{theorem}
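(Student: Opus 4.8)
The plan is to realize $\omega^\varepsilon$ as the maximizer of a \emph{penalized} kinetic energy whose Euler--Lagrange equation reproduces the prescribed profile $f$, and then to recover the rearrangement characterization from an invariance property of the penalty.

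\emph{Variational setup.} With $f^{-1}$ and $F(s)=\int_0^s f^{-1}(r)\,dr$ as in (H2)$'$, I would maximize
\[E_\varepsilon(\omega):=\frac12\int_D \mathcal{G}\omega(x)\,\omega(x)\,dx-\frac{1}{\varepsilon^2}\int_D F\big(\varepsilon^2\omega(x)\big)\,dx\]
over the admissible class $\mathcal{A}_\kappa:=\{\omega\in L^1(D):\omega\ge 0,\ \int_D\omega\,dx=\kappa\}$ (functions with $\int_D F(\varepsilon^2\omega)=+\infty$ are automatically excluded, as they send $E_\varepsilon$ to $-\infty$). Using the symmetry of $G$ and $\frac{d}{ds}F=f^{-1}$, the first variation is $\delta E_\varepsilon=\mathcal{G}\omega-f^{-1}(\varepsilon^2\omega)$, so a constrained maximizer should satisfy $\mathcal{G}\omega-f^{-1}(\varepsilon^2\omega)=\mu^\varepsilon$ on its support, that is, $\omega=\varepsilon^{-2}f(\mathcal{G}\omega-\mu^\varepsilon)$ after inverting through (H1).

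\emph{Existence.} I would run the direct method. The term $-\frac{1}{\varepsilon^2}\int_D F(\varepsilon^2\omega)$ is convex, hence weakly lower semicontinuous, and by (H2) the superlinearity of $F$ forces any maximizing sequence to be uniformly integrable, so by Dunford--Pettis it is weakly precompact in $L^1(D)$. Since $G$ has only a logarithmic singularity, $\mathcal{G}$ is compact and sends such a sequence to an equicontinuous family, whence $\mathcal{G}\omega_n\to\mathcal{G}\omega$ uniformly and the kinetic energy passes to the limit against the weak $L^1$ limit. Boundedness of $\sup_{\mathcal{A}_\kappa}E_\varepsilon$ is where (H3) enters: a collapse of the mass to a point would make the logarithmic self-energy grow at the rate $\ln\|\omega\|_\infty$, whereas (H2)$'$ makes the penalty grow at the rate $f^{-1}(\varepsilon^2\|\omega\|_\infty)$; since (H3) is equivalent to $f^{-1}(s)/\ln s\to+\infty$, the penalty wins, so $E_\varepsilon$ stays bounded above and no maximizing sequence concentrates to a point mass. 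Thus a maximizer $\omega^\varepsilon$ exists.

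\emph{Profile equation, steady solution, and energy characterization.} Making the first variation rigorous with mass-preserving perturbations, together with the one-sided inequality on $\{\omega^\varepsilon=0\}$ and $f^{-1}(0)=0$, yields a Lagrange multiplier $\mu^\varepsilon$ with $\omega^\varepsilon=\varepsilon^{-2}f(\mathcal{G}\omega^\varepsilon-\mu^\varepsilon)$ a.e. in $D$. As $f$ is monotone by (H1), Theorem B applies and $\omega^\varepsilon$ is a weak solution of \eqref{sv}. The energy characterization is then immediate from the observation that $\int_D F(\varepsilon^2\omega)\,dx$ depends only on the distribution function of $\omega$, hence is constant on the rearrangement class $\mathcal{R}(\omega^\varepsilon)\subset\mathcal{A}_\kappa$; therefore on $\mathcal{R}(\omega^\varepsilon)$ one has $E_\varepsilon=E-\text{const}$, and since $\omega^\varepsilon$ maximizes $E_\varepsilon$ over the larger class $\mathcal{A}_\kappa$, it maximizes $E$ over $\mathcal{R}(\omega^\varepsilon)$.

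\emph{Asymptotics, and the main obstacle.} The delicate part is the concentration analysis. I would establish two-sided bounds on $M_\varepsilon:=\sup E_\varepsilon$: a lower bound by inserting a rescaled test blob $\varepsilon^{-2}g((x-\bar x)/\varepsilon)$ centered at a global minimum $\bar x$ of $H$ (for which the penalty is $O(1)$), and an upper bound via the splitting $G=-\tfrac{1}{2\pi}\ln|x-y|-h$, giving $M_\varepsilon=\tfrac{\kappa^2}{4\pi}\ln(1/\varepsilon)-\tfrac{\kappa^2}{2}H(z_\varepsilon)+O(1)$ with $z_\varepsilon$ the center of mass. Matching forces $H(z_\varepsilon)\to\min_D H$, hence $z_\varepsilon\to\bar x$; a separate no-spreading estimate — if the mass occupied a region of diameter $\gg\varepsilon$ the log self-energy would drop below $M_\varepsilon$ — gives $\mathrm{diam}(\mathrm{supp}\,\omega^\varepsilon)=o(1)$ and thus $\mathrm{supp}(\omega^\varepsilon)\subset B_{o(1)}(\bar x)$. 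Finally, evaluating $\mu^\varepsilon=\mathcal{G}\omega^\varepsilon-f^{-1}(\varepsilon^2\omega^\varepsilon)$ on the support, where the potential of a mass-$\kappa$ blob at scale $\varepsilon$ equals $-\tfrac{\kappa}{2\pi}\ln\varepsilon+O(1)$ and $f^{-1}(\varepsilon^2\omega^\varepsilon)=O(1)$ (since $\varepsilon^2\omega^\varepsilon=O(1)$, obtained from the blow-up $\tilde\omega(y)=\varepsilon^2\omega^\varepsilon(\bar x+\varepsilon y)$ and its limiting profile), yields $\mu^\varepsilon=-\tfrac{\kappa}{2\pi}\ln\varepsilon+O(1)$. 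I expect the hardest points to be the uniform control of $\varepsilon^2\omega^\varepsilon$ through the rescaling limit and ruling out drift of the support toward $\partial D$, the latter handled by the blow-up of $H$ near the boundary.
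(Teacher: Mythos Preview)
Your variational setup and the rearrangement-invariance argument for the energy characterization are correct, but working directly over $\mathcal{A}_\kappa\subset L^1(D)$ with no $L^\infty$ constraint leaves a real gap in the existence step. The circularity is this: Dunford--Pettis needs uniform integrability, which you infer from a bound on $\int_D F(\varepsilon^2\omega_n)$; but along a maximizing sequence one only knows that the \emph{difference} $E(\omega_n)-\varepsilon^{-2}\int F(\varepsilon^2\omega_n)$ is bounded, and for general $\omega\in L^1$ both terms can diverge together. Your heuristic comparison phrased in terms of $\|\omega\|_\infty$ already presupposes $\omega\in L^\infty$, which is exactly what is at stake. What is actually required is an a priori inequality $E(\omega)\le c\,\varepsilon^{-2}\int_D F(\varepsilon^2\omega)+C$ with $c<1$ for all $\omega\in\mathcal A_\kappa$; this amounts to a logarithmic Hardy--Littlewood--Sobolev inequality combined with (H2)$'$ and (H3), and it is absent from your plan. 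The same issue recurs downstream: Theorem~B, the first-variation computation, and the claimed bound $\varepsilon^2\omega^\varepsilon=O(1)$ all need $\omega^\varepsilon\in L^\infty$, which you would have to bootstrap separately.

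The paper avoids all of this by introducing a second parameter $\Lambda$ and maximizing the same functional over the truncated class $\mathcal{A}_{\varepsilon,\Lambda}=\{0\le\omega\le\Lambda/\varepsilon^{2},\ \int_D\omega=\kappa\}$. Existence is then immediate by weak-$*$ compactness in $L^\infty$, and the first variation yields the profile equation together with a possible patch set $\{\omega=\Lambda/\varepsilon^{2}\}$. The core estimate --- a lower bound on $\mu^{\varepsilon,\Lambda}$ obtained from the energy lower bound and (H2)$'$, followed by an upper bound on $\psi^{\varepsilon,\Lambda}$ --- shows that for $\Lambda$ large but \emph{fixed independently of $\varepsilon$} the patch is empty; (H3) enters precisely here, through $\tau f^{-1}(s)-\ln s\to+\infty$. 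Once $\Lambda$ is frozen, the built-in bound $\varepsilon^2\omega^\varepsilon\le\Lambda$ feeds directly into Turkington's diameter lemma, and the $\mu^\varepsilon$ asymptotic comes from the energy identity $2\mathcal E(\omega^\varepsilon)=\kappa\mu^\varepsilon+O(1)$ rather than from your pointwise evaluation on the support. Your $L^1$ route is conceptually leaner (no auxiliary parameter) but trades an elementary patch-elimination argument for log-HLS-type machinery that you have not supplied.
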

Note that $H(x)\to+\infty$ as $x\to\partial D$, so $H$ attains its global minimum value in $D$.
\begin{remark}
For $f(s)=s_+,$ Theorem \ref{single} in fact provides a family of solutions to the plasma problem, which has been studied extensively in the literature. See Caffarelli--Friedman \cite{CF0}, Cao--Peng--Yan \cite{CPY0} for example.
\end{remark}

Our strategy of proving Theorem \ref{single} is as follows. We modify Turkington's method by considering the maximization of the following functional
\[\mathcal{E}(\omega)=E(\omega)-\mathcal{F}_\varepsilon(\omega),\,\,\mathcal{F}_\varepsilon (\omega)=\frac{1}{\varepsilon^2}\int_DF\big(\varepsilon^2\omega(x)\big)dx\]
with $F(s)=\int_0^sf^{-1}(r)dr$ over the following admissible class
\[\mathcal{A}_{\varepsilon,\Lambda}=\{\omega\in L^\infty(D)\mid 0\leq\omega\leq \frac{\Lambda}{\varepsilon^2}\,\mbox{ a.e. in }D, \int_D\omega(x)dx=\kappa\}.\]
It is not hard to prove that $\mathcal E$ attains its maximum value over $\mathcal{A}_{\varepsilon,\Lambda}$ and any maximizer satisfies
\begin{equation*}
\omega^{\varepsilon,\Lambda}=\frac{1}{\varepsilon^2}
f\big(\mathcal{G}\omega^{\varepsilon,\Lambda}-\mu^{\varepsilon,\Lambda}\big)
{\chi}_{_{\{x\in D\mid0<\mathcal{G}\omega^{\varepsilon,\Lambda}(x)-\mu^{\varepsilon,\Lambda}<f^{-1}(\Lambda)\}}}+\frac{\Lambda}{\varepsilon^2}\chi_{_{\{x\in D\mid\mathcal{G}\omega^{\varepsilon,\Lambda}(x)-\mu^{\varepsilon,\Lambda} \ge f^{-1}(\Lambda)\}}}
\end{equation*}
 for some $\mu^{\varepsilon,\Lambda}$ depending on $\varepsilon $ and $\Lambda$.
Then by analyzing the limiting behavior of the maximizer as $\varepsilon\to0$ we will show that if $\Lambda$ is sufficiently large, which does not depend on $\varepsilon,$ the functional $\mathcal{F}_\varepsilon$ plays a dominant role so that the patch part $\{x\in D\mid \mathcal{G}\omega^{\varepsilon,\Lambda}(x)-\mu^{\varepsilon,\Lambda}\ge f^{-1}(\Lambda)\}$ is empty. Moreover, by analyzing the energy like what Turkington did in \cite{T} we can show that the support of $\omega^{\varepsilon,\Lambda}$ ``shrinks" to a global minimum point of the Robin function. In our method, the parameter $\Lambda$ is new. Intuitively, as $\Lambda$ getting larger, the functional $\mathcal{F}_\varepsilon$ becomes more dominant relative to the quadratic term $E$, and finally completely eliminate the patch part.

As mentioned before, our construction also gives characterization of the energy of the solutions, which is essential to prove nonlinear stability. To make it clear, we recall the stability criterion proved by Burton \cite{B5}, which in our setting can be stated as follows.
\begin{theoremC}[Burton, \cite{B5}]
Let $\bar{\omega}\in L^\infty(D)$ be a steady solution to the vorticity equation. Suppose $\bar{\omega}\in L^\infty(D)$ is an isolated maximizer of the kinetic energy $E$ over $\mathcal{R}(\bar{\omega})$ in $L^p$ norm with $p\in[1,+\infty)$, that is, there exists $\delta_0>0$ such that for any $\omega\in\mathcal{R}(\bar{\omega})$, $0<\|\omega-\bar{\omega}\|_{L^p(D)}<\delta_0,$ we have $E(\bar{\omega})>E(\omega)$. Then $\bar{\omega}$ is nonlinearly stable in the following sense: for any $\epsilon>0,$ there exists $\delta>0$, such that for any initial vorticity $\omega_0\in \mathcal{R}(\bar{\omega})$ satisfying $\|\omega_0-\bar{\omega}\|_{L^p(D)}<\delta$, then the evolved vorticity $\omega(\cdot,t)$ of the Euler equation with initial vorticity $\omega_0$ satisfies $\|\omega(\cdot,t)-\bar{\omega}\|_{L^p(D)}<\epsilon$ for all $t\geq0$.
\end{theoremC}

By Burton's result, we are able to reduce nonlinear stability of $\omega^\varepsilon$ in Theorem \ref{single} to the uniqueness of an elliptic problem.
\begin{theorem}\label{stability}
Suppose that the $\bar{x}$ in Theorem \ref{single} is an isolated minimum point of the Robin function.
Suppose also that for sufficiently small $\varepsilon$ the solution to the following elliptic problem is unique
\begin{equation}\label{svef}
\begin{cases}
-\Delta u^\varepsilon=\frac{1}{\varepsilon^2}f(u^\varepsilon-\mu^\varepsilon),&x\in D,\\
u^\varepsilon=0,&x\in\partial D,\\
\int_D\frac{1}{\varepsilon^2}f(u^\varepsilon-\mu^\varepsilon)dx=1,\\
supp((u^\varepsilon-\mu^\varepsilon)_+)\subset B_{o(1)}(\bar{x}).
\end{cases}
\end{equation}
Then $\omega^\varepsilon$ is nonlinearly stable.
\end{theorem}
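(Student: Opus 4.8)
The plan is to verify the hypothesis of Theorem C, namely that $\omega^\varepsilon$ is an \emph{isolated} maximizer of the kinetic energy $E$ over $\mathcal{R}(\omega^\varepsilon)$ in the $L^p(D)$ topology, $p\in[1,+\infty)$. Since Theorem \ref{single} already guarantees that $\omega^\varepsilon$ is a maximizer, I would argue by contradiction. If $\omega^\varepsilon$ were not isolated, then for each $n$ there would exist $\omega_n\in\mathcal{R}(\omega^\varepsilon)$ with $0<\|\omega_n-\omega^\varepsilon\|_{L^p(D)}<1/n$ and $E(\omega_n)\ge E(\omega^\varepsilon)$; as $\omega^\varepsilon$ maximizes $E$ over $\mathcal{R}(\omega^\varepsilon)$, necessarily $E(\omega_n)=E(\omega^\varepsilon)$, so each $\omega_n$ is itself a maximizer distinct from $\omega^\varepsilon$ and converging to it in $L^p(D)$. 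Since every element of $\mathcal{R}(\omega^\varepsilon)$ is bounded by the same $L^\infty$ constant, the choice of $p$ is immaterial and the $\omega_n$ in fact converge in every $L^q(D)$, $q\in[1,+\infty)$.

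Next I would pin down the structure of these competing maximizers. By Burton's characterization of energy maximizers over a rearrangement class \cite{B2,B3,B5}, each $\omega_n$ satisfies $\omega_n=\phi_n(\mathcal{G}\omega_n)$ a.e.\ for some increasing $\phi_n$. Because $\omega^\varepsilon=\frac{1}{\varepsilon^2}f(\mathcal{G}\omega^\varepsilon-\mu^\varepsilon)$ is itself a strictly increasing function of its stream function on its support and $f$ is strictly increasing by (H1), the increasing profile attached to $\mathcal{R}(\omega^\varepsilon)$ is rigid, and each competitor must in fact take the form
\begin{equation*}
\omega_n=\frac{1}{\varepsilon^2}f(\mathcal{G}\omega_n-\mu_n)
\end{equation*}
for a suitable multiplier $\mu_n\in\mathbb{R}$. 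Writing $u_n:=\mathcal{G}\omega_n$, this exhibits $u_n$ as a solution of $-\Delta u_n=\frac{1}{\varepsilon^2}f(u_n-\mu_n)$ in $D$ with $u_n=0$ on $\partial D$ and $\int_D\frac{1}{\varepsilon^2}f(u_n-\mu_n)\,dx=\kappa$, matching \eqref{svef} up to the normalization of the total mass. Since $\omega_n\to\omega^\varepsilon$ in $L^p(D)$, elliptic regularity and Sobolev embedding give $u_n\to u^\varepsilon:=\mathcal{G}\omega^\varepsilon$ in $C^1(\overline{D})$; combined with the mass constraint and the strict monotonicity of $f$, this forces $\mu_n\to\mu^\varepsilon$.

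It then remains to check that each $u_n$, for $n$ large, lies in the class to which the uniqueness hypothesis applies, i.e.\ that $\mbox{supp}((u_n-\mu_n)_+)\subset B_{o(1)}(\bar x)$. Here I would use that, for the fixed small $\varepsilon$, Theorem \ref{single} places $\mbox{supp}(\omega^\varepsilon)=\{u^\varepsilon>\mu^\varepsilon\}$ in a ball compactly contained in $B_{o(1)}(\bar x)$ about the isolated minimum $\bar x$; the uniform convergence $u_n\to u^\varepsilon$ together with $\mu_n\to\mu^\varepsilon$ then guarantees $\{u_n>\mu_n\}\subset B_{o(1)}(\bar x)$ once $n$ is large. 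The assumption that $\bar x$ is an \emph{isolated} minimum of the Robin function is precisely what prevents the perturbed maximizers from relocating their concentration to a competing global minimum. Consequently $u_n$ and $u^\varepsilon$ are both solutions of \eqref{svef}, and the assumed uniqueness yields $u_n=u^\varepsilon$, whence $\omega_n=-\Delta u_n=-\Delta u^\varepsilon=\omega^\varepsilon$, contradicting $\omega_n\neq\omega^\varepsilon$. Thus $\omega^\varepsilon$ is an isolated maximizer and Theorem C delivers the nonlinear stability.

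The main obstacle I anticipate is the rigidity step asserting that every nearby maximizer inherits exactly the profile $\frac{1}{\varepsilon^2}f(\cdot-\mu_n)$ rather than some other increasing profile compatible with the same distribution function; making this precise requires a careful interplay between the distribution of $\omega^\varepsilon$, the level-set structure of the stream functions, and the strict monotonicity of $f$ in (H1). A secondary technical point is to ensure that the support localization of the competitors is uniform enough to invoke \eqref{svef}, for which the $C^1(\overline{D})$ convergence of the stream functions, rather than mere $L^p$ convergence of the vorticities, is essential.
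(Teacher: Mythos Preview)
Your overall strategy---verify the isolation hypothesis of Theorem C by showing any nearby $E$-maximizer coincides with $\omega^\varepsilon$---is the same as the paper's, but the mechanism you propose for pinning down the profile of the competitor has a genuine gap. Burton's theory yields only that a maximizer $\omega_n$ on $\mathcal{R}(\omega^\varepsilon)$ satisfies $\omega_n=\phi_n(\mathcal{G}\omega_n)$ for \emph{some} increasing $\phi_n$; you then assert that this forces $\phi_n(\cdot)=\varepsilon^{-2}f(\cdot-\mu_n)$. That is precisely the ``rigidity step'' you flag as an obstacle, and it is not justified: equality of the distribution functions of $\omega_n$ and $\omega^\varepsilon$ constrains $\phi_n$ only through the (a priori unknown) distribution of the level sets of $\mathcal{G}\omega_n$, so there is no direct reason $\phi_n$ must coincide with the specific profile $\varepsilon^{-2}f(\cdot-\mu_n)$.

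The paper bypasses this difficulty entirely by exploiting the modified functional $\mathcal{E}=E-\mathcal{F}_\varepsilon$ and the larger class $\mathcal{A}_\varepsilon$. Since $\mathcal{F}_\varepsilon$ is constant on $\mathcal{R}(\omega^\varepsilon)$, any $E$-maximizer $\tilde\omega^\varepsilon$ on $\mathcal{R}(\omega^\varepsilon)$ is also an $\mathcal{E}$-maximizer there; and because $\mathcal{R}(\omega^\varepsilon)\subset\mathcal{A}_\varepsilon$ and $\omega^\varepsilon$ already attains $\sup_{\mathcal{A}_\varepsilon}\mathcal{E}$, the competitor $\tilde\omega^\varepsilon$ is in fact a maximizer of $\mathcal{E}$ over all of $\mathcal{A}_\varepsilon$. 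The Euler--Lagrange analysis for $\mathcal{E}$ on $\mathcal{A}_\varepsilon$ (Lemma~2.2 together with Lemma~2.7) then gives the desired form $\tilde\omega^\varepsilon=\varepsilon^{-2}f(\mathcal{G}\tilde\omega^\varepsilon-\tilde\mu^\varepsilon)$ directly, with no rigidity argument needed. Moreover, the paper does not use a sequence $\omega_n\to\omega^\varepsilon$ to localize the support; instead, the asymptotic lemmas of Section~2 apply to \emph{any} $\mathcal{E}$-maximizer over $\mathcal{A}_\varepsilon$ and place its support near some global minimum $\tilde x$ of the Robin function. If $\tilde x=\bar x$, uniqueness of \eqref{svef} finishes; if $\tilde x\neq\bar x$, the isolated-minimum assumption makes the supports disjoint (for small $\varepsilon$), forcing $\|\tilde\omega^\varepsilon-\omega^\varepsilon\|_{L^1}=2\kappa$, which is excluded by choosing the isolation radius to be $\kappa$. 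Replacing your Burton-profile step with this $\mathcal{E}$-on-$\mathcal{A}_\varepsilon$ argument removes the gap.
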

Note that by Caffarelli--Friedman \cite{CF}, if $D$ is a convex domain, then $H$ is a strictly convex function, thus $H$ has a unique(thus isolated) minimum point in $D$.

Our third result deals with steady Euler flows with vorticity that is sharply concentrated in a finite number of regions of small diameter.
Let $(\bar{x}_1,\cdot\cdot\cdot,\bar{x}_k)$ be an isolated minimum point of $\mathcal{W}_k$ (defined by \eqref{krf} with $\kappa_1,\cdot\cdot\cdot,\kappa_k$ be $k$ nonzero numbers) with $x_i\in D, i=1,\cdot\cdot\cdot,k$ and $x_i\neq x_j$ if $i\neq j$. For convenience we choose a small positive number $r_0$ such that $\overline{B_{r_0}(\bar{x}_i)}\subset\subset D$, $\overline{B_{r_0}(\bar{x}_i)}\cap \overline{B_{r_0}(\bar{x}_j)}=\varnothing$ if $i\neq j$, and
$(\bar{x}_1,\cdot\cdot\cdot,\bar{x}_k)$ is the unique minimum point of $\mathcal{W}_k$ in $\overline{B_{r_0}(\bar{x}_1)}\times\cdot\cdot\cdot\times\overline{B_{r_0}(\bar{x}_k)}$.

\begin{theorem}\label{multiple}
Let $f_1,\cdot\cdot\cdot,f_k$ be $k$ real functions satisfying (H1)--(H3). Then there exists a positive number $\varepsilon_0$ such that for any $\varepsilon\in(0,\varepsilon_0)$, there exists a solution to \eqref{sv} having the form
\[\omega^\varepsilon=\sum_{i=1}^k\omega^\varepsilon_i,\,\,
\omega^\varepsilon_i=\frac{1}{\varepsilon^2} sgn(\kappa_i)\chi_{B_{r_0}(\bar{x}_i)}f_i\big(\mbox{sgn}(\kappa_i)\mathcal{G}\omega^\varepsilon-\mu^\varepsilon_i\big), \,\,\int_D\omega^\varepsilon_idx=\kappa_i,\]
where $\mu^\varepsilon_i$ is a real number depending on $\varepsilon$ satisfying $\mu^\varepsilon_i=-\frac{|\kappa_i|}{2\pi}\ln\varepsilon+O(1)$ as $\varepsilon\to0^+$.
Moreover, the support of each $\omega^\varepsilon_i$ shrinks to $\bar{x}_i$, that is,
\[\mbox{supp}(\omega^\varepsilon_i)\subset B_{o(1)}(\bar{x}_i)\]
as $\varepsilon$ goes to zero.

\end{theorem}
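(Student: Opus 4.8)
The plan is to carry the modified vorticity method of Theorem \ref{single} over to a \emph{localized} constrained maximization, one mass-constraint per ball, and to let the Kirchhoff--Routh function $\mathcal{W}_k$ enter through the location-dependent part of the kinetic energy. Fix the balls $B_{r_0}(\bar{x}_i)$ as in the hypotheses and a large constant $\Lambda$ independent of $\varepsilon$. With $F_i(s)=\int_0^s f_i^{-1}(r)\,dr$, I would maximize
\[
\mathcal{E}(\omega)=E(\omega)-\sum_{i=1}^k\frac{1}{\varepsilon^2}\int_{B_{r_0}(\bar{x}_i)}F_i\big(\varepsilon^2\,\mathrm{sgn}(\kappa_i)\,\omega_i\big)\,dx
\]
over the admissible class
\[
\mathcal{A}_{\varepsilon,\Lambda}=\Big\{\omega=\sum_{i=1}^k\omega_i:\ \mathrm{supp}(\omega_i)\subset\overline{B_{r_0}(\bar{x}_i)},\ 0\le\mathrm{sgn}(\kappa_i)\omega_i\le\tfrac{\Lambda}{\varepsilon^2},\ \int_D\omega_i\,dx=\kappa_i\Big\}.
\]
This set is convex and weak-$*$ closed in $L^\infty(D)$, $E$ is weak-$*$ continuous because $\mathcal{G}$ is compact, and each penalty term is convex, hence weak-$*$ lower semicontinuous, so $\mathcal{E}$ is weak-$*$ upper semicontinuous. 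The box constraint gives a uniform bound for each fixed $\varepsilon$, so a maximizer $\omega^\varepsilon=\sum_i\omega_i^\varepsilon$ exists. Note the self-energy $\tfrac12\iint G\,\omega_i^\varepsilon\omega_i^\varepsilon$ is large and positive regardless of the sign of $\kappa_i$, since $\omega_i^\varepsilon$ has one sign and $G$ has a positive logarithmic singularity; this is what drives concentration in the energy maximization.

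\textbf{Step 2 (profile equation and elimination of the patch).} A standard variation within $\mathcal{A}_{\varepsilon,\Lambda}$ (Lagrange multipliers $\mu_i^\varepsilon$ for the mass constraints, KKT conditions for the box constraints) shows that where the box constraint is inactive one has $\mathrm{sgn}(\kappa_i)\mathcal{G}\omega^\varepsilon-\mu_i^\varepsilon=f_i^{-1}(\varepsilon^2|\omega_i^\varepsilon|)$, that is,
\[
\omega_i^\varepsilon=\frac{1}{\varepsilon^2}\,\mathrm{sgn}(\kappa_i)\,\chi_{B_{r_0}(\bar{x}_i)}\,f_i\big(\mathrm{sgn}(\kappa_i)\mathcal{G}\omega^\varepsilon-\mu_i^\varepsilon\big),
\]
plus a ``patch'' part on $\{\mathrm{sgn}(\kappa_i)\mathcal{G}\omega^\varepsilon-\mu_i^\varepsilon\ge f_i^{-1}(\Lambda)\}$. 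Exactly as in the single-vortex argument, an energy comparison shows that if $\Lambda$ is fixed large enough (independent of $\varepsilon$) the penalty $F_i$ dominates and this patch part is empty for all small $\varepsilon$, so each $\omega_i^\varepsilon$ has the clean profile form claimed in the statement.

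\textbf{Step 3 (asymptotics and location via $\mathcal{W}_k$).} Here the Kirchhoff--Routh structure is used. Splitting $E(\omega^\varepsilon)=\sum_i\tfrac12\iint G\,\omega_i^\varepsilon\omega_i^\varepsilon+\sum_{i<j}\iint G\,\omega_i^\varepsilon\omega_j^\varepsilon$, one shows by Turkington-type estimates that $\mathrm{diam}\big(\mathrm{supp}(\omega_i^\varepsilon)\big)=o(1)$, that the blobs contract around points $z_i^\varepsilon\in\overline{B_{r_0}(\bar{x}_i)}$, and that $\mu_i^\varepsilon=-\frac{|\kappa_i|}{2\pi}\ln\varepsilon+O(1)$ (balancing the logarithmic self-field $|\kappa_i|(-\tfrac{1}{2\pi}\ln\varepsilon)$ against the $O(1)$ term $f_i^{-1}$). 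The self-energies contribute a location-independent quantity (depending only on the blob shapes and sizes) plus $-\tfrac12\sum_i\kappa_i^2 h(z_i^\varepsilon,z_i^\varepsilon)+o(1)$, while the cross terms give $\sum_{i<j}\kappa_i\kappa_j G(z_i^\varepsilon,z_j^\varepsilon)+o(1)$; the location-dependent part of $E$ is therefore $-\tfrac12\mathcal{W}_k(z_1^\varepsilon,\dots,z_k^\varepsilon)+o(1)$. Comparing $\mathcal{E}(\omega^\varepsilon)$ with a test configuration centered at $(\bar{x}_1,\dots,\bar{x}_k)$ forces $\mathcal{W}_k(z_1^\varepsilon,\dots,z_k^\varepsilon)\to\min_{\prod\overline{B_{r_0}(\bar{x}_i)}}\mathcal{W}_k$, and the assumed uniqueness of the minimizer there yields $z_i^\varepsilon\to\bar{x}_i$.

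\textbf{Main obstacle.} The crux is Step 3, and within it the point that the support constraint $\mathrm{supp}(\omega_i)\subset\overline{B_{r_0}(\bar{x}_i)}$ must be shown to be \emph{inactive}: since $z_i^\varepsilon\to\bar{x}_i$ lies in the open ball and the supports contract, for small $\varepsilon$ one has $\mathrm{supp}(\omega_i^\varepsilon)\subset\subset B_{r_0}(\bar{x}_i)$, so the artificial confinement does not enter the Euler--Lagrange relation and $\omega^\varepsilon$ satisfies the profile identity on a genuine neighborhood of its support. This requires the energy expansion above to be sharp enough to pin down the concentration at an \emph{interior} minimum of $\mathcal{W}_k$ rather than merely on the boundary of the product of balls, and to control the mixed-sign cross terms (which, unlike the self-energies, remain bounded because the supports stay in disjoint balls). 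Once the supports are interior, disjoint, and each $\omega_i^\varepsilon=f_i(\mathrm{sgn}(\kappa_i)\mathcal{G}\omega^\varepsilon-\mu_i^\varepsilon)$ near its support with $f_i$ monotone, Theorem B applies directly and shows that $\omega^\varepsilon$ is a weak solution of \eqref{sv}, completing the proof.
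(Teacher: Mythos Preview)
Your proposal is correct and follows essentially the same route as the paper: the same localized admissible class $\mathcal{A}^*_{\varepsilon,\Lambda}$ with per-ball support and mass constraints, the same maximization of $E$ minus the convex penalties $\sum_i\varepsilon^{-2}\int F_i(\varepsilon^2\mathrm{sgn}(\kappa_i)\omega_i)$, the same KKT/bathtub derivation of the profile form with patch part, the same $\Lambda$-independent elimination of the patch via the growth condition (H3), the Turkington-type diameter bound, the translation-comparison argument that forces the centers to the unique minimizer of $\mathcal{W}_k$ in $\prod\overline{B_{r_0}(\bar{x}_i)}$, and finally the observation that the support constraint becomes inactive so that Theorem~B applies. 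The only cosmetic difference is that the paper records the sharper $\mathrm{diam}(\mathrm{supp}(\omega_i^\varepsilon))\le R_0\varepsilon$ (needed implicitly to make the translated test configurations admissible in the location step), whereas you state only $o(1)$; you should make this $O(\varepsilon)$ bound explicit when carrying out Step~3.
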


The proof is by modifying the admissible $\mathcal{A}_{\varepsilon,\Lambda}$ by adding some suitable constraints on the support of the vorticity.

It is also worth mentioning that except for the desingularization type there is another type of steady Euler flows, which we call perturbation type. It consists of constructing steady Euler flows near a given one (usually a given nontrivial irrotational flow). The vorticity method and the stream function method still work in this situation. See Cao--Wang--Zhan \cite{CW3}, Li--Yang--Yan \cite{LYY}, Li--Peng \cite{LP} and the references therein. Finally, we bring to the attention of the reader that there is a similar situation with desingularization of vortex rings and shallow water vortices. See, e.g., Cao--Wan--Zhan \cite{CWZ}, Dekeyser \cite{D1, D2}, de Valeriola--Schaftingen \cite{DV} and Turkington \cite{Tur89}.

This paper is organized as follows. In Section 2, we deal with desingularization of a single point vortex by considering the maximization problem of $\mathcal E$ over $\mathcal{A}_{\varepsilon,\Lambda}$ and analyzing the limiting behavior of the maximizer as $\varepsilon\to0^+.$ We also give the proof of Theorem \ref{stability} in Section 2. In Section 3, we give the proof of Theorem \ref{multiple}.


\section{Proofs of Theorem \ref{single} and Theorem \ref{stability}}
In this section, we give the proofs of Theorem \ref{single} and Theorem \ref{stability}. As mentioned in Section 1, we first consider a maximization problem for the vorticity.

\subsection{Variational problem}
Let $\kappa>0$ be fixed and $\varepsilon>0$ be a parameter. Define
\begin{equation*}
\mathcal{A}_{\varepsilon,\Lambda}:=\{\omega\in L^\infty(D)~|~ 0\le \omega \le \frac{\Lambda}{\varepsilon^2}~ \mbox{ a.e. in }D, \int_{D}\omega(x)dx=\kappa \},
\end{equation*}
where $\Lambda>1$ is a sufficiently large real number such that $\mathcal{A}_{\varepsilon,\Lambda}$ is not empty. For example, we can take $\Lambda>\max\{1,\varepsilon^2\kappa|D|^{-1}\}$.
Consider the maximization problem of the following functional over $\mathcal{A}_{\varepsilon,\Lambda}$
$$\mathcal{E}(\omega)=\frac{1}{2}\int_D \omega(x)\mathcal{G}\omega(x)dx-\frac{1}{\varepsilon^2}\int_D F(\varepsilon^2\omega(x))dx,\,\,\omega\in \mathcal{A}_{\varepsilon,\Lambda}.$$
As mentioned in Section 1, we denote
\[E(\omega)=\frac{1}{2}\int_D \omega(x)\mathcal{G}\omega(x)dx,\,\, \mathcal F_\varepsilon(\omega)=\frac{1}{\varepsilon^2}\int_D F(\varepsilon^2\omega(x))dx,\,\,\,\omega\in \mathcal{A}_{\varepsilon,\Lambda}.\]
Since $F$ is a convex function, we can easily check that $\mathcal F_\varepsilon$ is a convex functional over $\mathcal{A}_{\varepsilon,\Lambda}$.
\begin{lemma}\label{lem1}
$\mathcal{E}$ is bounded from above and attains its maximum value over $\mathcal{A}_{\varepsilon,\Lambda}$.
\end{lemma}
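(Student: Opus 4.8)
The plan is to run the direct method of the calculus of variations. First I would check that $\mathcal{E}$ is finite and bounded above on $\mathcal{A}_{\varepsilon,\Lambda}$. Since $f^{-1}\equiv 0$ on $(-\infty,0]$ and $f^{-1}\ge 0$ on $[0,+\infty)$, the primitive $F(s)=\int_0^s f^{-1}(r)\,dr$ satisfies $F\ge 0$ (indeed $F\equiv 0$ on $(-\infty,0]$), so $\mathcal{F}_\varepsilon(\omega)\ge 0$ whenever $\omega\ge 0$, and therefore $\mathcal{E}(\omega)\le E(\omega)$ on $\mathcal{A}_{\varepsilon,\Lambda}$. To bound $E$ from above I use that each $\omega\in\mathcal{A}_{\varepsilon,\Lambda}$ has $\|\omega\|_{L^1(D)}=\kappa$ and $\|\omega\|_{L^\infty(D)}\le \Lambda/\varepsilon^2$; since $G\ge 0$ and $\sup_{x\in D}\int_D G(x,y)\,dy=\|\mathcal{G}1\|_{L^\infty(D)}=:C(D)<+\infty$ (the torsion function of $D$), one has $\|\mathcal{G}\omega\|_{L^\infty(D)}\le C(D)\|\omega\|_{L^\infty(D)}$ and hence $E(\omega)\le \tfrac12\|\omega\|_{L^1(D)}\|\mathcal{G}\omega\|_{L^\infty(D)}$ is controlled by a constant depending only on $\varepsilon,\Lambda,\kappa,D$. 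Finiteness of $\mathcal{F}_\varepsilon$ is immediate from $F(\varepsilon^2\omega)\le F(\Lambda)$. Thus $M:=\sup_{\mathcal{A}_{\varepsilon,\Lambda}}\mathcal{E}<+\infty$.

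Next I would take a maximizing sequence $\{\omega_n\}\subset\mathcal{A}_{\varepsilon,\Lambda}$ with $\mathcal{E}(\omega_n)\to M$. As $\{\omega_n\}$ is bounded in $L^\infty(D)$, hence in $L^2(D)$, after passing to a subsequence I may assume $\omega_n\rightharpoonup\omega^*$ weakly in $L^2(D)$. The admissible set is preserved under this limit: the pointwise constraints $0\le\omega\le\Lambda/\varepsilon^2$ describe a convex set that is strongly closed in $L^2(D)$, hence weakly closed by Mazur's theorem, while the mass constraint $\int_D\omega\,dx=\kappa$ is the level set of a weakly continuous linear functional (test against $1\in L^2(D)$). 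Therefore $\omega^*\in\mathcal{A}_{\varepsilon,\Lambda}$.

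It then remains to establish that $\mathcal{E}$ is weakly upper semicontinuous along this sequence. The key analytic input is that $\mathcal{G}:L^2(D)\to L^2(D)$ is compact: by elliptic regularity $\mathcal{G}$ maps $L^2(D)$ boundedly into $H_0^1(D)$, and the Rellich embedding $H_0^1(D)\hookrightarrow L^2(D)$ is compact, so $\mathcal{G}\omega_n\to\mathcal{G}\omega^*$ strongly in $L^2(D)$. Writing $\int_D\omega_n\mathcal{G}\omega_n-\int_D\omega^*\mathcal{G}\omega^*=\int_D\omega_n(\mathcal{G}\omega_n-\mathcal{G}\omega^*)+\int_D(\omega_n-\omega^*)\mathcal{G}\omega^*$ and using the $L^2$-boundedness of $\{\omega_n\}$ together with the strong and weak convergences, both terms vanish in the limit; hence $E$ is in fact weakly \emph{continuous}, $E(\omega_n)\to E(\omega^*)$. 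On the other hand $F$ is convex (its derivative $f^{-1}$ is nondecreasing), so $\mathcal{F}_\varepsilon$ is a convex functional on $L^2(D)$ and therefore weakly lower semicontinuous, giving $\mathcal{F}_\varepsilon(\omega^*)\le\liminf_n\mathcal{F}_\varepsilon(\omega_n)$. Combining these,
\[
\mathcal{E}(\omega^*)=E(\omega^*)-\mathcal{F}_\varepsilon(\omega^*)\ge \lim_n E(\omega_n)-\liminf_n\mathcal{F}_\varepsilon(\omega_n)=\limsup_n\mathcal{E}(\omega_n)=M,
\]
and since $\omega^*\in\mathcal{A}_{\varepsilon,\Lambda}$ forces $\mathcal{E}(\omega^*)\le M$, we conclude $\mathcal{E}(\omega^*)=M$, so $\omega^*$ is a maximizer. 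I expect the only genuinely nonroutine ingredient to be the compactness of $\mathcal{G}$: it is what upgrades the quadratic energy from merely weakly lower semicontinuous (all that convexity of $E$ would yield, which is the wrong direction for a maximization problem) to weakly continuous. Once that is in hand, the convex penalty term conveniently goes the right way, and the argument closes with no further difficulty.
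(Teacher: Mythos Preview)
Your proof is correct and follows essentially the same direct-method approach as the paper: a maximizing sequence, weak compactness of the admissible class, weak continuity of $E$ via compactness of the Green operator, and weak lower semicontinuity of $\mathcal{F}_\varepsilon$ via convexity of $F$. The only differences are cosmetic technical choices---the paper works with weak-star $L^\infty$ convergence and $C^1$ convergence of $\mathcal{G}\omega_j$ from elliptic regularity, and proves the lower semicontinuity of $\mathcal{F}_\varepsilon$ by hand via Mazur's theorem, whereas you use weak $L^2$ convergence, Rellich compactness through $H_0^1$, and invoke the standard fact that convex strongly-lsc functionals are weakly lsc.
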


\begin{proof}
Since $G(\cdot,\cdot)\in L^1(D\times D)$ we have
\[E(\omega)\leq \frac{\Lambda^2}{2\varepsilon^4}\int_D\int_D|G(x,y)|dxdy<+\infty,\,\,\forall\,\omega\in \mathcal{A}_{\varepsilon,\Lambda}.\]
For $\mathcal{F}_\varepsilon$ we have
\[|\mathcal{F}_\varepsilon|\leq \frac{1}{\varepsilon^2}F(\Lambda)|D|,\,\,\forall\,\omega\in \mathcal{A}_{\varepsilon,\Lambda}.\]
Therefore $\mathcal{E}$ is bounded from above over $\mathcal{A}_{\varepsilon,\Lambda}$.

Now let $\{\omega_{j}\}\subset \mathcal{A}_{\varepsilon,\Lambda}$ be a sequence such that as $j\to +\infty$
\[\mathcal{E}(\omega_{j}) \to \sup_{\omega\in \mathcal{A}_{\varepsilon,\Lambda}}\mathcal{E}({\omega}).\]
Since $\mathcal{A}_{\varepsilon,\Lambda}$ is a sequentially compact subset of $L^\infty(D)$ in the weak star topology(see \cite{CW1} for example), we may assume, up to a subsequence, that $\omega_j\to\bar{\omega}$ weakly star in $L^\infty(D)$ as $j\to\infty$ for some $\bar{\omega}\in \mathcal{A}_{\varepsilon,\Lambda}$.

Now we show that $\bar{\omega}$ is in fact a maximizer of $\mathcal{E}$ over $\mathcal{A}_{\varepsilon,\Lambda}$. To this end, it suffices to prove
\[\mathcal{E}(\bar{\omega})\geq\limsup_{j\to\infty}\mathcal{E}(\omega_j).\]
First by elliptic regularity theory we have $\mathcal{G}\omega_{j}\to \mathcal{G}\bar{\omega}$ in $C^1(\overline{D})$, from which we deduce that
\begin{equation}\label{Evar}
\lim_{j\to\infty}E(\omega_j)=E(\bar{\omega}).
\end{equation}
On the other hand, for $\mathcal{F}_\varepsilon$ we have
\begin{equation}\label{Fvar}
    \liminf_{j\to +\infty} \mathcal{F}_\varepsilon(\omega_j)\ge \mathcal{F}_\varepsilon(\bar\omega).
\end{equation}
In fact, we can prove \eqref{Fvar} by contradiction. Suppose that $\liminf_{j\to +\infty} \mathcal{F}_\varepsilon(\omega_j)\leq \mathcal{F}_\varepsilon(\bar\omega)+2\delta$ for some $\delta>0$. We may take a subsequence, still denoted by $\{\omega_j\}$, such that $\mathcal{F}_\varepsilon(\omega_j)\leq \mathcal{F}_\varepsilon(\bar\omega)+\delta$ for each $j$.
Since $\omega_j\to\bar{\omega}$ weakly star in $L^\infty(D)$ as $j\to\infty$, we have $\omega_j\to\bar{\omega}$ weakly in $L^2(D)$ as $j\to\infty$. By Mazur's theorem, we can take a sequence $\{w_n\}$ that converges to $\bar{\omega}$ strongly in $L^2(D)$, where each $w_n$ is made up of convex combinations of the $\omega_j$'s, that is, \[w_n=\sum_{j=1}^{m_n}\theta_{n_j}\omega_j,\,\,\,\sum_{j=1}^{m_n}\theta_{n_j}=1,\,\,\theta_{n_j}\in[0,1].\]
Without loss of generality we also assume that $w_n$ converges to $\bar{w}$ a.e. in $D$. Then by Lebesgue's dominated convergence theorem we obtain
\begin{equation}\label{aaa}
\lim_{n\to+\infty}\mathcal{F}_\varepsilon(w_n)=\mathcal{F}_\varepsilon(\bar{\omega}).
\end{equation}
On the other hand,
\[\mathcal{F}_\varepsilon(w_n)=\mathcal{F}_\varepsilon(\sum_{j=1}^{m_n}\theta_{n_j}\omega_j)\leq
\sum_{j=1}^{m_n}\theta_{n_j}\mathcal{F}_\varepsilon(\omega_j)\leq \sum_{j=1}^{m_n}\theta_{n_j}(\mathcal{F}_\varepsilon(\bar\omega)+\delta)=\mathcal{F}_\varepsilon(\bar\omega)+\delta,\]
which contradicts \eqref{aaa}. Here we used the convexity of $\mathcal F_\varepsilon$. Thus we have proved \eqref{Fvar}.
Combining \eqref{Evar} and \eqref{Fvar} we get the desired result.
\end{proof}

\begin{lemma}\label{lem1-1}
Let $\omega^{\varepsilon,\Lambda}$ be a maximizer of $\mathcal{E}$ over $\mathcal{A}_{\varepsilon,\Lambda}$. Then there exists some $\mu^{\varepsilon,\Lambda}$ such that
\begin{equation}\label{2-3}
\omega^{\varepsilon,\Lambda}=\frac{1}{\varepsilon^2}f(\psi^{\varepsilon,\Lambda}){\chi}_{\{x\in D\mid0<\psi^{\varepsilon,\Lambda}(x)<f^{-1}(\Lambda)\}}+\frac{\Lambda}{\varepsilon^2}\chi_{_{\{x\in D\mid\psi^{\varepsilon,\Lambda}(x) \geq f^{-1}(\Lambda)\}}} \ \ \mbox{ a.e. in }  D,
\end{equation}
where
\begin{equation}\label{2-4}
 \psi^{\varepsilon,\Lambda}:=\mathcal{G}\omega^{\varepsilon,\Lambda}-\mu^{\varepsilon,\Lambda}.
\end{equation}
Moreover, $\mu^{\varepsilon,\Lambda}$ has the following lower bound
\begin{equation}\label{2-5}
  \mu^{\varepsilon,\Lambda} \ge -f^{-1}(\Lambda).
\end{equation}

\end{lemma}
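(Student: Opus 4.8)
The plan is to characterize $\omega^{\varepsilon,\Lambda}$ through its first-order optimality condition. Throughout I write $\omega=\omega^{\varepsilon,\Lambda}$ and $M=\Lambda/\varepsilon^2$ for brevity. Since $\mathcal{A}_{\varepsilon,\Lambda}$ is convex, for any competitor $\omega'\in\mathcal{A}_{\varepsilon,\Lambda}$ the path $\omega_t=\omega+t(\omega'-\omega)$, $t\in[0,1]$, remains in $\mathcal{A}_{\varepsilon,\Lambda}$, and maximality gives $\frac{d}{dt}\mathcal{E}(\omega_t)\big|_{t=0^+}\le0$. Because $E$ is quadratic with the symmetric kernel $G$, its derivative is $\int_D\mathcal{G}\omega\,(\omega'-\omega)\,dx$; and since $F\in C^1$ with $F'=f^{-1}$ continuous and bounded on the relevant range $[0,\Lambda]$, differentiation under the integral sign yields $\frac{d}{dt}\mathcal{F}_\varepsilon(\omega_t)\big|_{t=0}=\int_D f^{-1}(\varepsilon^2\omega)(\omega'-\omega)\,dx$. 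Setting $\zeta:=\mathcal{G}\omega-f^{-1}(\varepsilon^2\omega)$, I obtain the variational inequality
\[
\int_D \zeta\,(\omega'-\omega)\,dx\le 0,\qquad \forall\,\omega'\in\mathcal{A}_{\varepsilon,\Lambda},
\]
that is, $\omega$ maximizes the \emph{linear} functional $\omega'\mapsto\int_D\zeta\,\omega'$ over $\mathcal{A}_{\varepsilon,\Lambda}$.

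I would then extract a single scalar threshold by a bathtub-principle argument. The claim is that there is $\mu=\mu^{\varepsilon,\Lambda}$ with, up to null sets,
\[
\omega=M\ \text{on}\ \{\zeta>\mu\},\qquad \omega=0\ \text{on}\ \{\zeta<\mu\}.
\]
Let $\alpha$ and $\beta$ denote the essential supremum of $\zeta$ over $\{\omega<M\}$ and the essential infimum of $\zeta$ over $\{\omega>0\}$ (both sets have positive measure, since $\omega\equiv M$ or $\omega\equiv0$ would violate $\int_D\omega=\kappa$). If $\alpha>\beta$, pick $a>b$ with $\alpha>a>b>\beta$; then $P\subset\{\zeta>a\}\cap\{\omega<M\}$ and $Q\subset\{\zeta<b\}\cap\{\omega>0\}$ have positive measure, and transferring a small equal amount of mass from $Q$ to $P$ keeps the competitor in $\mathcal{A}_{\varepsilon,\Lambda}$ while strictly increasing $\int_D\zeta\,\omega'$, contradicting maximality. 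Hence $\alpha\le\beta$, and any $\mu\in[\alpha,\beta]$ gives the two displayed inclusions; equivalently $\{0<\omega<M\}\subset\{\zeta=\mu\}$ a.e.

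Next I translate this into \eqref{2-3} by a pointwise analysis, using $\psi=\mathcal{G}\omega-\mu$ and the identity $\zeta-\mu=\psi-f^{-1}(\varepsilon^2\omega)$. Where $\psi(x)<0$: if $\omega(x)>0$ then $f^{-1}(\varepsilon^2\omega(x))>0$ forces $\zeta(x)<\mu$, hence $\omega(x)=0$, a contradiction, so $\omega=0$. Where $\psi(x)>f^{-1}(\Lambda)$: if $\omega(x)<M$ then $f^{-1}(\varepsilon^2\omega(x))<f^{-1}(\Lambda)$ forces $\zeta(x)>\mu$, hence $\omega(x)=M$, a contradiction, so $\omega=M$. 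Where $0<\psi(x)<f^{-1}(\Lambda)$: the same two comparisons rule out both $\omega(x)=0$ and $\omega(x)=M$, so $0<\omega(x)<M$, whence $\zeta(x)=\mu$, i.e.\ $f^{-1}(\varepsilon^2\omega(x))=\psi(x)>0$; applying $f$ gives $\omega(x)=\frac1{\varepsilon^2}f(\psi(x))$. This is exactly \eqref{2-3}. For \eqref{2-5}, note $G\ge0$ gives $\mathcal{G}\omega\ge0$, so if $\mu<-f^{-1}(\Lambda)$ then $\psi=\mathcal{G}\omega-\mu>f^{-1}(\Lambda)$ everywhere, forcing $\omega\equiv M$ and hence $\int_D\omega=M|D|=\Lambda|D|/\varepsilon^2>\kappa$ by the choice $\Lambda>\varepsilon^2\kappa|D|^{-1}$, contradicting the mass constraint; thus $\mu\ge-f^{-1}(\Lambda)$.

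I expect the bathtub/rearrangement step to be the main obstacle: one must produce the single threshold $\mu$ from the variational inequality and justify the mass-transfer perturbation rigorously (measurability and positive measure of $P,Q$, admissibility in $\mathcal{A}_{\varepsilon,\Lambda}$, and the strict increase of the linear functional). Once $\mu$ is secured, converting the $\zeta$-level sets into $\psi$-level sets through the coupling $\zeta=\psi+\mu-f^{-1}(\varepsilon^2\omega)$ is routine case-checking, and \eqref{2-5} is immediate from the positivity of $G$ together with the choice of $\Lambda$.
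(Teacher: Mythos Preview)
Your proof is correct and follows essentially the same route as the paper: derive the linear variational inequality $\int_D(\mathcal{G}\omega-f^{-1}(\varepsilon^2\omega))(\omega'-\omega)\,dx\le0$ from convexity of $\mathcal{A}_{\varepsilon,\Lambda}$, then apply a bathtub-type threshold argument. The only cosmetic differences are that the paper cites ``an adaptation of the bathtub principle'' and writes down the three-line characterization \eqref{3-6} directly (together with an explicit formula for $\mu^{\varepsilon,\Lambda}$), whereas you spell out the mass-transfer contradiction and then translate to $\psi$-level sets by case analysis; and for \eqref{2-5} the paper assumes $\mu^{\varepsilon,\Lambda}<0$ and bounds $\omega$ below by $\varepsilon^{-2}\min\{f(|\mu^{\varepsilon,\Lambda}|),\Lambda\}$, while you argue directly that $\mu^{\varepsilon,\Lambda}<-f^{-1}(\Lambda)$ forces $\omega\equiv\Lambda/\varepsilon^2$ and violates the mass constraint.
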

\begin{proof}
We take a family of test functions as follows
\begin{equation*}
  \omega_{s}=\omega^{\varepsilon,\Lambda}+s({\omega}-\omega^{\varepsilon,\Lambda}),\ \ \ s\in[0,1],
\end{equation*}
where ${\omega}$ is an arbitrary element of $\mathcal{A}_{\varepsilon,\Lambda}$. Since $\omega^{\varepsilon,\Lambda}$ is a maximizer, we have
\begin{equation*}
     0  \ge \frac{d\mathcal E(\omega_{s})}{ds}\bigg|_{s=0^+}
       =\int_{D}({\omega}-\omega^{\varepsilon,\Lambda})\big(\mathcal{G}\omega^{\varepsilon,\Lambda}-f^{-1}(\varepsilon^2\omega^{\varepsilon,\Lambda}))dx,
 \end{equation*}
that is,
\begin{equation*}
  \int_{D}\omega^{\varepsilon,\Lambda}\big(\mathcal{G}\omega^{\varepsilon,\Lambda}-f^{-1}(\varepsilon^2\omega^{\varepsilon,\Lambda}))dx\ge \int_{D}{\omega}\big(\mathcal{G}\omega^{\varepsilon,\Lambda}-f^{-1}(\varepsilon^2\omega^{\varepsilon,\Lambda}))dx
\end{equation*}
for all ${\omega}\in \mathcal{A}_{\varepsilon,\Lambda}.$
By an adaptation of the bathtub principle (see Lieb--Loss \cite{LL}, \S 1.14) we obtain
\begin{equation}\label{3-6}
  \begin{split}
    \mathcal{G}\omega^{\varepsilon,\Lambda}-\mu^{\varepsilon,\Lambda} &\ge f^{-1}(\varepsilon^2\omega^{\varepsilon,\Lambda}) \ \ \  \mbox{whenever}\  \omega^{\varepsilon,\Lambda}=\frac{\Lambda}{\varepsilon^2}, \\
     \mathcal{G}\omega^{\varepsilon,\Lambda}-\mu^{\varepsilon,\Lambda} &= f^{-1}(\varepsilon^2\omega^{\varepsilon,\Lambda})\ \ \  \mbox{whenever}\  0<\omega^{\varepsilon,\Lambda}<\frac{\Lambda}{\varepsilon^2}, \\
    \mathcal{G}\omega^{\varepsilon,\Lambda}-\mu^{\varepsilon,\Lambda} &\le f^{-1}(\varepsilon^2\omega^{\varepsilon,\Lambda}) \ \ \  \mbox{whenever}\  \omega^{\varepsilon,\Lambda}=0,
  \end{split}
\end{equation}
where $\mu^{\varepsilon,\Lambda}$ is a real number determined by
 $$\mu^{\varepsilon,\Lambda}=\inf\{s\in\mathbb R\mid|\{x\in D\mid\mathcal{G}\omega^{\varepsilon,\Lambda}-f^{-1}(\varepsilon^2\omega^{\varepsilon,\Lambda})>s\}|\le \frac{\kappa\varepsilon^2}{\Lambda}\}.$$
 Now the desired form $\eqref{2-3}$ follows immediately.

Next we prove \eqref{2-5}. We may suppose $\mu^{\varepsilon,\Lambda}<0$ (if otherwise, \eqref{2-5} holds true automatically). In this case, by \eqref{2-3} we have $\{x\in D\mid\omega^{\varepsilon,\Lambda}>0\}=D$ and $\omega^{\varepsilon,\Lambda}\ge\varepsilon^{-2} \min\{f(|\mu^{\varepsilon,\Lambda}|), \Lambda\}$ a.e.\,in $D$. Since $\int_D \omega^{\varepsilon,\Lambda}(x)dx =\kappa$, we conclude that
\begin{equation*}
  \min\{f(|\mu^{\varepsilon,\Lambda}|), \Lambda\}\le \frac{\varepsilon^2\kappa}{|D|}< \Lambda,
\end{equation*}
which clearly implies \eqref{2-5} by the strict monotonicity of $f$. Thus the proof is completed.
\end{proof}
\subsection{Limiting behavior}
In the following we analyze the limiting behavior of $\omega^{\varepsilon,\Lambda}$ as $\varepsilon\to 0^+.$  For convenience we will use $C$ to denote generic positive constants not depending on $\varepsilon$ and $\Lambda$ that may change from line to line.

We begin by giving a lower bound of $\mathcal{E}(\omega^{\varepsilon,\Lambda})$.
\begin{lemma}\label{lem2}
  $\mathcal{E}(\omega^{\varepsilon,\Lambda})\ge \frac{\kappa^2}{4\pi}\ln{\frac{1}{\varepsilon}}-C.$
\end{lemma}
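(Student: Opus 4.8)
The plan is to exploit that $\omega^{\varepsilon,\Lambda}$ maximizes $\mathcal{E}$ over $\mathcal{A}_{\varepsilon,\Lambda}$, so it suffices to exhibit one admissible test vorticity whose value of $\mathcal{E}$ already achieves the claimed lower bound. I would take the uniform ``patch''
\[
\omega^*=\frac{1}{\varepsilon^2}\chi_{B_{r_\varepsilon}(\bar x)},\qquad r_\varepsilon=\sqrt{\tfrac{\kappa}{\pi}}\,\varepsilon,
\]
centred at a fixed interior point $\bar x\in D$, with $\varepsilon$ small enough that $\overline{B_{r_\varepsilon}(\bar x)}\subset D$. Then $\int_D\omega^*\,dx=\varepsilon^{-2}|B_{r_\varepsilon}|=\kappa$ and $\varepsilon^2\omega^*\equiv1\le\Lambda$ on $B_{r_\varepsilon}(\bar x)$, so $\omega^*\in\mathcal{A}_{\varepsilon,\Lambda}$ and $\mathcal{E}(\omega^{\varepsilon,\Lambda})\ge\mathcal{E}(\omega^*)=E(\omega^*)-\mathcal{F}_\varepsilon(\omega^*)$.

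Next I would estimate $E(\omega^*)$ by splitting the Green's function as $G(x,y)=-\frac{1}{2\pi}\ln|x-y|-h(x,y)$. For the singular part, the change of variables $x=r_\varepsilon\xi$, $y=r_\varepsilon\eta$ gives
\[
\int_{B_{r_\varepsilon}}\!\int_{B_{r_\varepsilon}}\ln|x-y|\,dx\,dy
=r_\varepsilon^4\Big(\pi^2\ln r_\varepsilon+\int_{B_1}\!\int_{B_1}\ln|\xi-\eta|\,d\xi\,d\eta\Big),
\]
where the last double integral is a finite constant. Multiplying by $-\frac{1}{4\pi}\varepsilon^{-4}$ and using $r_\varepsilon^4=(\kappa/\pi)^2\varepsilon^4$ together with $\ln r_\varepsilon=\ln\varepsilon+O(1)$ produces the leading term $\frac{\kappa^2}{4\pi}\ln\frac1\varepsilon$ plus a bounded remainder. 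The regular part contributes $-\frac{1}{2\varepsilon^4}\int_{B_{r_\varepsilon}}\int_{B_{r_\varepsilon}}h(x,y)\,dx\,dy$, which is $O(1)$ because $h$ is bounded on the relevant compact subset of $D\times D$ and $\varepsilon^{-4}|B_{r_\varepsilon}|^2=\kappa^2$. Hence $E(\omega^*)=\frac{\kappa^2}{4\pi}\ln\frac1\varepsilon+O(1)$. For the penalty term, since $\varepsilon^2\omega^*$ is identically $1$ on its support,
\[
\mathcal{F}_\varepsilon(\omega^*)=\frac{1}{\varepsilon^2}\int_{B_{r_\varepsilon}}F(1)\,dx=\frac{F(1)}{\varepsilon^2}\,|B_{r_\varepsilon}|=\kappa F(1),
\]
a finite constant. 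Combining the two estimates yields $\mathcal{E}(\omega^*)\ge\frac{\kappa^2}{4\pi}\ln\frac1\varepsilon-C$, which gives the lemma.

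I expect the only genuinely delicate point to be the uniformity of $C$ in $\Lambda$, since the generic constant is required to be independent of both $\varepsilon$ and $\Lambda$. This is precisely why the test patch is built at the fixed height $\varepsilon^{-2}$ (so that $\varepsilon^2\omega^*\equiv1$) rather than at the maximal admissible height $\Lambda\varepsilon^{-2}$. Had one saturated the constraint, the penalty would become $\frac{1}{\varepsilon^2}F(\Lambda)|B_{r_\varepsilon}|\sim\tfrac{\kappa}{\Lambda}F(\Lambda)$, which grows with $\Lambda$ (as $F(\Lambda)/\Lambda\to+\infty$ since $f^{-1}$ is increasing and unbounded), and the bound would fail to be uniform. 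With the fixed-height choice the logarithmic scaling is entirely routine and both $E(\omega^*)$ and $\mathcal{F}_\varepsilon(\omega^*)$ stay $O(1)$ uniformly in $\Lambda$.
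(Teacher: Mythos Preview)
Your proof is correct and follows exactly the paper's approach: the same uniform patch test function $\varepsilon^{-2}\chi_{B_{\varepsilon\sqrt{\kappa/\pi}}(x_0)}$ is used, and you have simply filled in the details that the paper abbreviates as ``by a simple calculation''. Your remark on why the height is fixed at $\varepsilon^{-2}$ rather than $\Lambda\varepsilon^{-2}$, to keep $C$ independent of $\Lambda$, is a useful clarification that the paper leaves implicit.
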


\begin{proof}
The idea is to choose a suitable test function.
Let $x_0\in D$ be a fixed point.
Define \[\tilde{\omega}^{\varepsilon,\Lambda}=\frac{1}{\varepsilon^2} \chi_{_{B_{\varepsilon \sqrt{\kappa/\pi}}(x_0)}}.\] It is obvious that $\tilde{\omega}^{\varepsilon,\Lambda} \in \mathcal{A}_{\varepsilon,\Lambda}$ if $\varepsilon$ is sufficiently small. Therefore
\[ \mathcal{E}(\omega^{\varepsilon,\Lambda})\ge \mathcal{E}(\tilde{\omega}^{\varepsilon,\Lambda}).\]
By a simple calculation, we get
\begin{equation*}
  \mathcal{E}(\tilde{\omega}^{\varepsilon,\Lambda})\ge \frac{\kappa^2}{4\pi}\ln{\frac{1}{\varepsilon}}-C,
\end{equation*}
where the positive number $C$ does not depend on $\varepsilon$ and $\Lambda$. Thus the proof is completed.
\end{proof}

We now turn to estimate the Lagrange multiplier $\mu^{\varepsilon,\Lambda}$.
\begin{lemma}\label{lem3}
$ \mu^{\varepsilon,\Lambda}\ge \frac{\kappa}{2\pi}\ln{\frac{1}{\varepsilon}}-|1-2\delta_1|f^{-1}(\Lambda)-C,$ where $\delta_1$ is the positive number in (H2)$'$.
\end{lemma}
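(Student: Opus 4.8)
The plan is to read off $\mu^{\varepsilon,\Lambda}$ from an energy identity and then insert the energy lower bound of Lemma \ref{lem2}. Writing $\psi^{\varepsilon,\Lambda}=\mathcal G\omega^{\varepsilon,\Lambda}-\mu^{\varepsilon,\Lambda}$ as in \eqref{2-4} and using the mass constraint $\int_D\omega^{\varepsilon,\Lambda}\,dx=\kappa$, I would first note that $2E(\omega^{\varepsilon,\Lambda})=\int_D\omega^{\varepsilon,\Lambda}\mathcal G\omega^{\varepsilon,\Lambda}\,dx=\int_D\omega^{\varepsilon,\Lambda}\psi^{\varepsilon,\Lambda}\,dx+\mu^{\varepsilon,\Lambda}\kappa$, so that, since $\mathcal E=E-\mathcal F_\varepsilon$,
\[
\mu^{\varepsilon,\Lambda}\kappa=2\mathcal E(\omega^{\varepsilon,\Lambda})+2\mathcal F_\varepsilon(\omega^{\varepsilon,\Lambda})-\int_D\omega^{\varepsilon,\Lambda}\psi^{\varepsilon,\Lambda}\,dx.
\]
Lemma \ref{lem2} gives $2\mathcal E(\omega^{\varepsilon,\Lambda})\ge\frac{\kappa^2}{2\pi}\ln\frac1\varepsilon-C$, which after division by $\kappa$ produces exactly the leading term $\frac{\kappa}{2\pi}\ln\frac1\varepsilon$. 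Thus the whole statement reduces to the lower bound
\[
2\mathcal F_\varepsilon(\omega^{\varepsilon,\Lambda})-\int_D\omega^{\varepsilon,\Lambda}\psi^{\varepsilon,\Lambda}\,dx\ge-|1-2\delta_1|f^{-1}(\Lambda)\kappa-C.
\]

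To handle the left-hand side I would estimate the integrand $\frac{2}{\varepsilon^2}F(\varepsilon^2\omega^{\varepsilon,\Lambda})-\omega^{\varepsilon,\Lambda}\psi^{\varepsilon,\Lambda}$ pointwise, splitting $D$ according to the explicit form \eqref{2-3} of the maximizer. On the ``bulk'' set $\{0<\psi^{\varepsilon,\Lambda}<f^{-1}(\Lambda)\}$, where the profile relation $\varepsilon^2\omega^{\varepsilon,\Lambda}=f(\psi^{\varepsilon,\Lambda})$ holds, the duality inequality (H2)$'$ in the form $F(f(s))\ge\delta_1 sf(s)$ turns the integrand into something $\ge(2\delta_1-1)\omega^{\varepsilon,\Lambda}\psi^{\varepsilon,\Lambda}$; since $0<\psi^{\varepsilon,\Lambda}<f^{-1}(\Lambda)$ there, this is bounded below by $-|1-2\delta_1|f^{-1}(\Lambda)\omega^{\varepsilon,\Lambda}$, whose integral contributes precisely the $\Lambda$-dependent term $-|1-2\delta_1|f^{-1}(\Lambda)\kappa$. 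On $\{\omega^{\varepsilon,\Lambda}=0\}$ there is nothing to estimate.

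The only genuinely delicate contribution, and the step I expect to be the main obstacle, comes from the saturated patch $P:=\{\psi^{\varepsilon,\Lambda}\ge f^{-1}(\Lambda)\}$, where $\omega^{\varepsilon,\Lambda}\equiv\Lambda/\varepsilon^2$ and the profile identity is lost: there the integrand equals $\frac{\Lambda}{\varepsilon^2}\big(\tfrac{2}{\Lambda}F(\Lambda)-\psi^{\varepsilon,\Lambda}\big)$, and a priori one only knows $\psi^{\varepsilon,\Lambda}\ge f^{-1}(\Lambda)$, so a crude estimate would let this be as negative as $-\frac{\Lambda}{\varepsilon^2}\sup_P\psi^{\varepsilon,\Lambda}$, of the same order $\ln\frac1\varepsilon$ as the main term. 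The key is therefore a uniform upper bound on $\psi^{\varepsilon,\Lambda}$ on $P$. I would obtain it by observing that $v:=\psi^{\varepsilon,\Lambda}-f^{-1}(\Lambda)$ solves $-\Delta v=\Lambda/\varepsilon^2$ in $P$ with $v=0$ on $\partial P$ and $v\ge0$, so by comparison with the torsion function of $P$ (via Schwarz symmetrization, the ball being extremal) one gets $\sup_P v\le\frac{\Lambda}{\varepsilon^2}\cdot\frac{|P|}{4\pi}$; since $\frac{\Lambda}{\varepsilon^2}|P|=\int_P\omega^{\varepsilon,\Lambda}\le\kappa$, this yields the $\varepsilon$- and $\Lambda$-independent bound $\psi^{\varepsilon,\Lambda}\le f^{-1}(\Lambda)+\frac{\kappa}{4\pi}$ on $P$. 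Feeding this together with $F(\Lambda)\ge\delta_1\Lambda f^{-1}(\Lambda)$ into the patch integrand bounds it below by $-\big(|1-2\delta_1|f^{-1}(\Lambda)+\frac{\kappa}{4\pi}\big)\omega^{\varepsilon,\Lambda}$; integrating over $P$, adding the bulk estimate, and using $\int_A\omega^{\varepsilon,\Lambda}+\int_P\omega^{\varepsilon,\Lambda}=\kappa$ gives the required lower bound (the patch producing the harmless extra constant $\kappa^2/(4\pi)$), completing the argument. The substantive work is thus the torsion/maximum-principle bound controlling $\psi^{\varepsilon,\Lambda}$ on the saturated region; everything else is the convexity bookkeeping above.
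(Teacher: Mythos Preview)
Your argument is correct and, for the first half, essentially coincides with the paper's: both proofs start from the identity $2\mathcal E(\omega^{\varepsilon,\Lambda})=\int_D\omega^{\varepsilon,\Lambda}\psi^{\varepsilon,\Lambda}\,dx-2\mathcal F_\varepsilon(\omega^{\varepsilon,\Lambda})+\kappa\mu^{\varepsilon,\Lambda}$, invoke (H2)$'$ to compare $\mathcal F_\varepsilon$ with $\int\omega^{\varepsilon,\Lambda}f^{-1}(\varepsilon^2\omega^{\varepsilon,\Lambda})$, and then insert the lower bound of Lemma~\ref{lem2}. The genuine difference is in how the ``patch excess'' $\int_D\omega^{\varepsilon,\Lambda}\bigl(\psi^{\varepsilon,\Lambda}-f^{-1}(\Lambda)\bigr)_+\,dx$ is shown to be $O(1)$. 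The paper sets $U^{\varepsilon,\Lambda}=(\psi^{\varepsilon,\Lambda}-f^{-1}(\Lambda))_+$, uses \eqref{2-5} to see $U^{\varepsilon,\Lambda}\in H^1_0(D)$, integrates by parts to get $\int_D|\nabla U^{\varepsilon,\Lambda}|^2=\int_D\omega^{\varepsilon,\Lambda}U^{\varepsilon,\Lambda}$, and then closes via H\"older and the embedding $W^{1,1}\hookrightarrow L^2$ together with the mass bound $\tfrac{\Lambda}{\varepsilon^2}|\{\omega^{\varepsilon,\Lambda}=\Lambda/\varepsilon^2\}|\le\kappa$. You instead obtain a pointwise bound: on the saturated set $P$ one has $-\Delta(\psi^{\varepsilon,\Lambda}-f^{-1}(\Lambda))=\Lambda/\varepsilon^2$ with zero boundary data, so Talenti/Schwarz symmetrization for the torsion function and the same mass bound give $\psi^{\varepsilon,\Lambda}\le f^{-1}(\Lambda)+\kappa/(4\pi)$ on $P$, whence the excess integral is at most $\kappa^2/(4\pi)$. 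Your route is a maximum-principle argument and yields an explicit constant; the paper's route is an energy/Sobolev argument that is perhaps more robust (it does not need the equation to hold classically inside $P$, only the global weak identity), and it is this version that the paper reuses, with minor modifications, in the multi-vortex setting of Lemma~\ref{le3}.
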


\begin{proof}
Recalling \eqref{2-3} and the assumption (H2)$'$ on $f$, we have
\begin{equation}\label{2-7}
\begin{split}
    2\mathcal E(\omega^{\varepsilon,\Lambda}) &=  \int_D\omega^{\varepsilon,\Lambda}\mathcal{G}\omega^{\varepsilon,\Lambda}dx-\frac{2}{\varepsilon^2}\int_DF(\varepsilon^2\omega^{\varepsilon,\Lambda})dx \\
     & \le\int_D \omega^{\varepsilon,\Lambda} \psi^{\varepsilon,\Lambda}dx-2\delta_1\int_D \omega^{\varepsilon,\Lambda} f^{-1}(\varepsilon^2\omega^{\varepsilon,\Lambda})dx+\kappa\mu^{\varepsilon,\Lambda} \\
     & \le |1-2\delta_1|\kappa f^{-1}(\Lambda)+\int_D\omega^{\varepsilon,\Lambda}\left(\psi^{\varepsilon,\Lambda}-f^{-1}(\Lambda)\right)_+dx +\kappa\mu^{\varepsilon,\Lambda}.
\end{split}
\end{equation}
Denote $U^{\varepsilon,\Lambda}:=\left(\psi^{\varepsilon,\Lambda}-f^{-1}(\Lambda)\right)_+$. Since $\mu^{\varepsilon,\Lambda}\geq-f^{-1}(\Lambda)$, we have $U^{\varepsilon,\Lambda}=0$ on $\partial D$.
So by integration by parts we have
\begin{equation}\label{eeeee}
 \int_D{|\nabla U^{\varepsilon,\Lambda}|^2}dx= \int_D \omega^{\varepsilon,\Lambda}U^{\varepsilon,\Lambda}dx.
\end{equation}
On the other hand, by H\"older's inequality and Sobolev's inequality
\begin{equation}\label{fffff}
\begin{split}
  \int_D \omega^{\varepsilon,\Lambda}U^{\varepsilon,\Lambda}dx
     & \le \frac{\Lambda}{\varepsilon^2}|\{x\in D\mid\omega^{\varepsilon,\Lambda}={\Lambda}{\varepsilon^{-2}}\}|^{\frac{1}{2}}\left(\int_D |U^{\varepsilon,\Lambda}|^2dx\right)^{\frac{1}{2}}\\
     & \le \frac{C\Lambda }{\varepsilon^2}|\{x\in D\mid\omega^{\varepsilon,\Lambda}={\Lambda}{\varepsilon^{-2}}\}|^{\frac{1}{2}}\int_D |\nabla U^{\varepsilon,\Lambda}|dx \\
     & \le C\left(\int_D {|\nabla U^{\varepsilon,\Lambda}|^2 dx}\right)^{\frac{1}{2}}.
\end{split}
\end{equation}
Here the positive constant $C$  does not depend on $\varepsilon$ and $\Lambda$. From \eqref{eeeee} and \eqref{fffff} we conclude that $\int_D \omega^{\varepsilon,\Lambda}U^{\varepsilon,\Lambda}dx$ is uniformly bounded with respect to $\varepsilon$ and $\Lambda$, which together with \eqref{2-7} and Lemma \ref{lem2} leads to the desired result.
\end{proof}

The following lemma shows that  $\psi^{\varepsilon,\Lambda}$ has a prior upper bound with respect to $\Lambda$.
\begin{lemma}\label{lem5}
$ \psi^{\varepsilon,\Lambda} \le |1-2\delta_1|f^{-1}(\Lambda)+\frac{\kappa}{4\pi} \ln \Lambda +C.$
\end{lemma}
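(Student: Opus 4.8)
The plan is to bound $\psi^{\varepsilon,\Lambda}=\mathcal{G}\omega^{\varepsilon,\Lambda}-\mu^{\varepsilon,\Lambda}$ from above by controlling $\mathcal{G}\omega^{\varepsilon,\Lambda}$ pointwise and then inserting the lower bound for $\mu^{\varepsilon,\Lambda}$ already established in Lemma~\ref{lem3}. The decisive feature is that the leading $\frac{\kappa}{2\pi}\ln\frac{1}{\varepsilon}$ contributions to $\mathcal{G}\omega^{\varepsilon,\Lambda}$ and to $-\mu^{\varepsilon,\Lambda}$ cancel exactly, leaving only the $\ln\Lambda$ and $f^{-1}(\Lambda)$ terms.

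First I would use the standard upper estimate for the Green's function of a bounded smooth domain, $G(x,y)\le -\frac{1}{2\pi}\ln|x-y|+C$, which follows from the domain monotonicity of $G$ (comparison with the Green's function of a large disc containing $D$) together with the fact that the regular part $h$ is bounded below on $D\times D$. Combining this with $\int_D\omega^{\varepsilon,\Lambda}\,dy=\kappa$ gives, for every $x\in D$,
\[
\mathcal{G}\omega^{\varepsilon,\Lambda}(x)\le \int_D\Big(-\tfrac{1}{2\pi}\ln|x-y|\Big)\omega^{\varepsilon,\Lambda}(y)\,dy+C\kappa.
\]

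Next, to estimate the remaining integral I would exploit the two constraints $0\le\omega^{\varepsilon,\Lambda}\le \Lambda/\varepsilon^2$ and $\int_D\omega^{\varepsilon,\Lambda}=\kappa$ through the bathtub principle: among all densities in $\mathcal{A}_{\varepsilon,\Lambda}$ the integral is largest when the mass is packed as close to $x$ as possible, i.e.\ when it equals $\frac{\Lambda}{\varepsilon^2}$ on a disc $B_\rho(x)$ with $\pi\rho^2\cdot\frac{\Lambda}{\varepsilon^2}=\kappa$, so that $\rho=\varepsilon\sqrt{\kappa/(\pi\Lambda)}$ — indeed the superlevel sets of $-\ln|x-y|$ are discs centered at $x$, and restricting the support to $D$ can only decrease the value, so the full-ball integral is a legitimate upper bound even when $x$ is near $\partial D$. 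A direct evaluation of $\frac{\Lambda}{\varepsilon^2}\int_{B_\rho(x)}\big(-\tfrac{1}{2\pi}\ln|x-y|\big)\,dy$ (an elementary $\int_0^\rho r\ln r\,dr$ computation) then produces $\frac{\kappa}{2\pi}\ln\frac{1}{\varepsilon}+\frac{\kappa}{4\pi}\ln\Lambda+C$, whence $\mathcal{G}\omega^{\varepsilon,\Lambda}\le \frac{\kappa}{2\pi}\ln\frac{1}{\varepsilon}+\frac{\kappa}{4\pi}\ln\Lambda+C$ uniformly in $x$.

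Finally I would combine this with Lemma~\ref{lem3}, which yields $-\mu^{\varepsilon,\Lambda}\le -\frac{\kappa}{2\pi}\ln\frac{1}{\varepsilon}+|1-2\delta_1|f^{-1}(\Lambda)+C$; adding the two estimates the $\frac{\kappa}{2\pi}\ln\frac{1}{\varepsilon}$ terms cancel and we arrive at the asserted bound $\psi^{\varepsilon,\Lambda}\le |1-2\delta_1|f^{-1}(\Lambda)+\frac{\kappa}{4\pi}\ln\Lambda+C$. I expect the main obstacle to be the bathtub step: one must justify that the worst-case density is the concentrated patch on $B_\rho(x)$ uniformly in $x\in D$, and then carry out the logarithmic integral carefully so that the coefficient of $\ln\Lambda$ comes out to be precisely $\kappa/(4\pi)$. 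The rest is bookkeeping of constants independent of $\varepsilon$ and $\Lambda$.
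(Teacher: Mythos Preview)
Your proposal is correct and follows essentially the same route as the paper: bound $\mathcal{G}\omega^{\varepsilon,\Lambda}$ by the Newtonian potential, replace $\omega^{\varepsilon,\Lambda}$ by the worst-case patch $\frac{\Lambda}{\varepsilon^2}\chi_{B_\rho(x)}$ with $\rho=\varepsilon\sqrt{\kappa/(\pi\Lambda)}$ via the bathtub principle, evaluate the resulting logarithmic integral, and then subtract the lower bound for $\mu^{\varepsilon,\Lambda}$ from Lemma~\ref{lem3}. The paper carries out exactly these steps (it does not name the bathtub principle explicitly at this point, but the inequality it writes is precisely that rearrangement), so there is nothing materially different between your argument and the original.
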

\begin{proof}
For any $x\in D$, we have
\begin{equation*}
\begin{split}
   \psi^{\varepsilon,\Lambda}(x) &  \le \frac{1}{2\pi}\int_D\ln\frac{1}{|x-y|}\omega{^{\varepsilon,\Lambda}}(y)dy-\mu^{_{\varepsilon,\Lambda}}+C \\
     & \le \frac{\Lambda}{2\pi\varepsilon^2}\int_{B_{\varepsilon \sqrt{\kappa/(\Lambda\pi)}}(0)}\ln\frac{1}{|y|}dy-\mu^{_{\varepsilon,\Lambda}}+C \\
     & \le \frac{\kappa}{2\pi}\ln\frac{1}{\varepsilon}+\frac{\kappa}{4\pi}\ln\Lambda-\mu^{_{\varepsilon,\Lambda}}+C. \\
\end{split}
\end{equation*}
Hence by Lemma \ref{lem3} we have
\begin{equation*}
  \psi^{\varepsilon,\Lambda}(x) \le|1-2\delta_1|f^{-1}(\Lambda)+ \frac{\kappa}{4\pi} \ln \Lambda +C.
\end{equation*}
Since $x\in D$ is arbitrary, we conclude the proof.
\end{proof}

As a consequence of Lemma \ref{lem5}, we can eliminate the patch part in \eqref{2-3}.
\begin{lemma}\label{lem6}
If $\Lambda$ is sufficiently large(not depending on $\varepsilon$), then for all $\varepsilon>0$ we have
\begin{equation}
\label{mea0}|\{x\in D\mid\omega^{\varepsilon,\Lambda}(x)={\Lambda}{\varepsilon^{-2}}\}|=0.
\end{equation}
 As a consequence, $\omega^{\varepsilon,\Lambda}$ has the form
\begin{equation*}
  \omega^{\varepsilon,\Lambda}=\frac{1}{\varepsilon^2}f(\psi^{\varepsilon,\Lambda}).
\end{equation*}
\end{lemma}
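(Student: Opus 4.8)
The plan is to combine the uniform pointwise upper bound on $\psi^{\varepsilon,\Lambda}$ from Lemma \ref{lem5} with the description of the ``patch set'' furnished by \eqref{2-3}, and then to exploit the subexponential growth of $f$ encoded in (H3). First I would note that, by the representation \eqref{2-3},
\[
\{x\in D\mid\omega^{\varepsilon,\Lambda}(x)=\Lambda\varepsilon^{-2}\}=\{x\in D\mid\psi^{\varepsilon,\Lambda}(x)\ge f^{-1}(\Lambda)\}
\]
up to a null set. Hence it suffices to show that, for $\Lambda$ large enough and independent of $\varepsilon$, one has $\psi^{\varepsilon,\Lambda}(x)<f^{-1}(\Lambda)$ for every $x\in D$, which makes this set empty and yields \eqref{mea0}; the claimed form $\omega^{\varepsilon,\Lambda}=\varepsilon^{-2}f(\psi^{\varepsilon,\Lambda})$ is then immediate upon dropping the (now vanishing) second term in \eqref{2-3}.

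Next I would feed the bound of Lemma \ref{lem5} into this requirement. Since $\psi^{\varepsilon,\Lambda}(x)\le |1-2\delta_1|f^{-1}(\Lambda)+\frac{\kappa}{4\pi}\ln\Lambda+C$ holds for all $x\in D$ with $C$ independent of $\varepsilon$ and $\Lambda$, it is enough to arrange
\[
|1-2\delta_1|f^{-1}(\Lambda)+\frac{\kappa}{4\pi}\ln\Lambda+C<f^{-1}(\Lambda),
\]
equivalently $\gamma\, f^{-1}(\Lambda)>\frac{\kappa}{4\pi}\ln\Lambda+C$, where $\gamma:=1-|1-2\delta_1|$. Because $\delta_1\in(0,1)$ (from (H2)$'$) we have $|1-2\delta_1|<1$, so $\gamma>0$; this strict positivity is exactly where the requirement $\delta_1<1$ in (H2)$'$ enters.

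The crux is the growth comparison: I must show $f^{-1}(\Lambda)$ dominates $\ln\Lambda$ as $\Lambda\to+\infty$. I would read off this fact from (H3). Fixing $\tau>0$, assumption (H3) gives $f(s)<e^{\tau s}$ for $s$ large, i.e.\ $\ln f(s)<\tau s$; writing $\Lambda=f(s)$ yields $f^{-1}(\Lambda)>\tau^{-1}\ln\Lambda$ for $\Lambda$ large, and since $\tau>0$ is arbitrary this shows $f^{-1}(\Lambda)/\ln\Lambda\to+\infty$. Consequently I can first pick $M$ with $\gamma M>\frac{\kappa}{4\pi}+1$, and then choose $\Lambda$ so large that simultaneously $f^{-1}(\Lambda)>M\ln\Lambda$ and $\ln\Lambda>C$; this gives $\gamma\, f^{-1}(\Lambda)>(\frac{\kappa}{4\pi}+1)\ln\Lambda>\frac{\kappa}{4\pi}\ln\Lambda+C$, the desired strict inequality, for this fixed $\Lambda$ and every $\varepsilon$. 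The only genuine obstacle is this growth comparison, and the whole argument relies on the uniformity of $C$ in $\varepsilon$ from Lemma \ref{lem5}, which is precisely what allows the threshold $\Lambda$ to be chosen independently of $\varepsilon$.
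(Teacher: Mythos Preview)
Your proposal is correct and follows essentially the same approach as the paper: both combine the pointwise upper bound from Lemma~\ref{lem5} with the observation that $\psi^{\varepsilon,\Lambda}\ge f^{-1}(\Lambda)$ on the patch set, reduce to the inequality $(1-|1-2\delta_1|)f^{-1}(\Lambda)>\frac{\kappa}{4\pi}\ln\Lambda+C$, and then invoke (H3) to show $f^{-1}(\Lambda)$ outgrows $\ln\Lambda$. Your derivation of $f^{-1}(\Lambda)/\ln\Lambda\to+\infty$ from (H3) is spelled out a bit more explicitly than the paper's (which simply records $\lim_{s\to+\infty}(\tau f^{-1}(s)-\ln s)=+\infty$), but the argument is the same.
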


\begin{proof}
  Notice that
\begin{equation}\label{2-9}
  \psi^{\varepsilon,\Lambda}\ge f^{-1}(\Lambda)\ \ \text{on}\ \  \{x\in D\mid\omega^{\varepsilon,\Lambda}(x)={\Lambda}{\varepsilon^{-2}}\}.
\end{equation}
Combining \eqref{2-9} and Lemma \ref{lem5}, we conclude that there exists some $C$ not depending on $\varepsilon$ and $\Lambda$ such that
\begin{equation}\label{2-10}
(1-|1-2\delta_1|) f^{-1}(\Lambda)\le \frac{\kappa}{4\pi}\ln \Lambda+C    \ \ \text{on}\ \  \{x\in D\mid\omega^{\varepsilon,\Lambda}(x)={\Lambda}{\varepsilon^{-2}}\}.
\end{equation}
Note that since $\delta_1\in(0,1),$ there holds $1-|1-2\delta_1|\in(0,1)$. Recall the assumption (H3) on $f$, that is, for each $\tau>0$
\begin{equation}
  \lim_{s\to+\infty}f(s)e^{-\tau s}=0,
\end{equation}
which implies for each $\tau>0$
\begin{equation}\label{2-11}
  \lim_{s\to+\infty}\left(\tau f^{-1}(s)-\ln s\right)=+\infty.
\end{equation}
Combining \eqref{2-10} and \eqref{2-11}, we deduce that if $\Lambda$ is sufficiently large, which does not depend on $\varepsilon$, then
 \[|\{x\in D\mid\omega^{\varepsilon,\Lambda}(x)={\Lambda}{\varepsilon^{-2}}\}|=0\]
 as desired.
\end{proof}

\begin{remark}
  Note that \eqref{2-11} is the only place we used (H3). Actually, it is easy to see that (H3) can be replaced by
   \begin{itemize}
   \item[(H3)$'$] There exists some $\tau_0>0$, which depends on $\delta_0$ and $\kappa$, such that
   \[ \lim_{s\to+\infty}f(s)e^{-\tau_0 s}=0.\]
   \end{itemize}
\end{remark}

In the rest of this section, we fix the parameter $\Lambda$ such that \eqref{mea0} holds. To simplify notations, we shall abbreviate $(\mathcal{A}_{\varepsilon,\Lambda},\omega^{\varepsilon,\Lambda},\psi^{\varepsilon,\Lambda}, \mu^{\varepsilon,\Lambda})$ as $(\mathcal{A}_{\varepsilon},\omega^{\varepsilon}, \psi^{\varepsilon}, \mu^{\varepsilon})$.

Now we turn to estimate the size and location of the supports of $\omega^\varepsilon$ as $\varepsilon\to0.$ To this end, we first give a general lemma that is used frequently in such problems.
\begin{lemma}\label{le7}
  Let $\Omega\subset D$, $0<\epsilon<1$, $A\ge0$, and let non-negative $\Gamma\in L^1(D)$, $\int_D \Gamma(x)dx=1$ and $||\Gamma||_{L^p(D)}\le C_1 \epsilon^{-2(1-{1}/{p})}$ for some $1<p\le +\infty$ and $C_1>0$. Suppose for any $x\in \Omega$, there holds
  \begin{equation}\label{3-12}
    (1-A)\ln\frac{1}{\epsilon}\le \int_D \ln\frac{1}{|x-y|}\Gamma(y)dy+C_2,
  \end{equation}
  where $C_2$ is a positive constant.
Then there exists some constant $R>1$ such that
\begin{equation*}
  diam(\Omega)\le R\epsilon^{1-2A}.
\end{equation*}
The constant $R$ may depend on $C_1$, $C_2$, but not on $A$, $\epsilon$.
\end{lemma}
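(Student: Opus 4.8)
The plan is to bound the distance between two arbitrary points of $\Omega$ and then pass to the supremum, so it suffices to fix $x_1,x_2\in\Omega$, set $d:=|x_1-x_2|$, and show $d\le R\epsilon^{1-2A}$ with $R$ independent of $A$ and $\epsilon$. If $d\le 2\epsilon$ the claim is immediate because $\epsilon^{1-2A}\ge\epsilon$ for $A\ge0$, so I may assume $d>2\epsilon$. Adding the hypothesis \eqref{3-12} at $x_1$ and at $x_2$ and using $\int_D\Gamma=1$ I would first obtain
\[
2(1-A)\ln\tfrac1\epsilon-2C_2\le \int_D\ln\frac{1}{|x_1-y|\,|x_2-y|}\,\Gamma(y)\,dy.
\]
Since $|x_1-y|+|x_2-y|\ge d$ forces $\max(|x_1-y|,|x_2-y|)\ge d/2$, I have the elementary bound $|x_1-y|\,|x_2-y|\ge \tfrac{d}{2}\min(|x_1-y|,|x_2-y|)$. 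Writing $\rho(y):=\mathrm{dist}(y,\{x_1,x_2\})=\min(|x_1-y|,|x_2-y|)$, this reduces the problem to
\[
2(1-A)\ln\tfrac1\epsilon-2C_2\le \ln\tfrac2d+I,\qquad I:=\int_D\ln\frac{1}{\rho(y)}\,\Gamma(y)\,dy.
\]

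The heart of the matter is the sharp estimate $I\le \ln\tfrac1\epsilon+C$ with leading constant exactly $1$, which is what produces the precise exponent $1-2A$. I would split $D$ at radius $\epsilon$ into the two balls $B_\epsilon(x_1),B_\epsilon(x_2)$ (disjoint, since $\epsilon<d/2$) and their complement. On the complement $\rho\ge\epsilon$, so that part contributes at most $(\ln\tfrac1\epsilon)\int_{D\setminus(B_\epsilon(x_1)\cup B_\epsilon(x_2))}\Gamma$. On each ball $\rho(y)=|x_i-y|$, and I would use the layer-cake identity $\ln\frac{1}{|x_i-y|}=\ln\frac1\epsilon+\int_{|x_i-y|}^{\epsilon}\frac{dr}{r}$ together with Fubini to get
\[
\int_{B_\epsilon(x_i)}\ln\frac{1}{|x_i-y|}\,\Gamma\,dy=\Big(\ln\tfrac1\epsilon\Big)\,m_i(\epsilon)+\int_0^\epsilon\frac{m_i(r)}{r}\,dr,\qquad m_i(r):=\int_{B_r(x_i)}\Gamma .
\]
Hölder's inequality and the hypothesis $\|\Gamma\|_{L^p}\le C_1\epsilon^{-2(1-1/p)}$ give $m_i(r)\le C_1\pi^{1/p'}(r/\epsilon)^{2/p'}$ (with $p'$ the conjugate exponent), whence $\int_0^\epsilon m_i(r)/r\,dr\le C_4$ for a constant $C_4=C_4(C_1,p)$. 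Summing the three pieces, the mass factors $m_1(\epsilon),m_2(\epsilon)$ in front of $\ln\tfrac1\epsilon$ combine with the complementary mass to the total mass $1$; this telescoping yields exactly $I\le \ln\tfrac1\epsilon+2C_4$.

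Inserting this into the reduced inequality gives $\ln\tfrac2d\ge(1-2A)\ln\tfrac1\epsilon-2C_2-2C_4$, that is $d\le 2e^{2(C_2+C_4)}\epsilon^{1-2A}$, so $R=2e^{2(C_2+C_4)}>1$ works and depends only on $C_1,C_2$ (and the fixed $p$), not on $A$ or $\epsilon$. The delicate point — the step I expect to be the main obstacle — is obtaining $I\le\ln\tfrac1\epsilon+C$ with the coefficient of $\ln\tfrac1\epsilon$ genuinely equal to $1$ uniformly in $p$: a crude Hölder estimate of the near-field integrals leaves a constant multiplying $\ln\tfrac1\epsilon$ and would only give the weaker exponent $1-2A-\eta$. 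The layer-cake representation is precisely the device that makes the near- and far-field mass contributions cancel, which is the mechanism behind the exact power $\epsilon^{1-2A}$; in the special case $p=+\infty$ the same bound follows at once from the bathtub principle, and the general argument is designed to reproduce that sharp constant.
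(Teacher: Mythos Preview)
Your proof is correct, but it follows a genuinely different route from the paper. The paper uses Turkington's ``half--mass'' argument: from the hypothesis at a single point $x\in\Omega$ one rewrites the inequality as $-A\ln\tfrac1\epsilon\le\int_D\ln\tfrac{\epsilon}{|x-y|}\Gamma\,dy+C_2$, bounds the near part $\{|x-y|<R_1\epsilon\}$ by H\"older (giving a constant independent of $\epsilon,\Lambda$) and the far part by $\ln\tfrac{1}{R_1}\int_{D\setminus B_{R_1\epsilon}(x)}\Gamma$, and then sets $R_1=R_2\epsilon^{-2A}$ to deduce $\int_{B_{R_2\epsilon^{1-2A}}(x)}\Gamma>\tfrac12$ for $R_2$ large; two points of $\Omega$ at distance $>2R_2\epsilon^{1-2A}$ would then carry disjoint balls each holding more than half the mass, a contradiction. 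Your argument instead adds the hypothesis at two points, uses the elementary product bound $|x_1-y||x_2-y|\ge\tfrac{d}{2}\rho(y)$, and obtains the sharp estimate $I\le\ln\tfrac1\epsilon+C$ by a layer--cake decomposition in which the near-- and far--field mass factors in front of $\ln\tfrac1\epsilon$ telescope to the total mass $1$. Both approaches hinge on the same H\"older control of the near field; the paper's version yields the extra information that more than half of $\Gamma$ concentrates near each point of $\Omega$ (occasionally useful in related problems), while your version is more direct, gives an explicit constant $R=2e^{2(C_2+C_4)}$, and makes the mechanism behind the exact exponent $1-2A$ particularly transparent.
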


\begin{proof}
We follow the strategy in Turkington \cite{T}. Let $q$ be the conjugate exponent of $p$, that is, $p^{-1}+q^{-1}=1$. Let $R_1>1$ be a fixed number. By \eqref{3-12}, for any $x\in \Omega$, we have
\begin{equation}\label{3-13}
  \begin{split}
     -A\ln\frac{1}{\epsilon} & \le \int_{B_{R_1\epsilon}(x)}\Big(\ln\frac{\epsilon}{|x-y|}\Big)_+\Gamma(y)dy+\int_{D \backslash B_{R_1\epsilon}(x)}\Big(\ln\frac{\epsilon}{|x-y|}\Big)_+\Gamma(y)dy+C_2 \\
       & \le \epsilon^{-2/q}\|\Gamma\|_{L^p(D)} \|\ln{|x|}\|_{L^q(B_1(0))}+\ln\frac{1}{R_1}\int_{D \backslash B_{R_1\epsilon}(x)}\Gamma(y)dy+C_2\\
       & \le \ln\frac{1}{R_1}\int_{D \backslash B_{R_1\epsilon}(x)}\Gamma(y)dy+ C_1 \|\ln{|x|}\|_{L^q(B_1(0))}+C_2  \\
       & \le \ln\frac{1}{R_1}\int_{D \backslash B_{R_1\epsilon}(x)}\Gamma(y)dy+ C.
  \end{split}
\end{equation}
Taking $R_1=R_2\epsilon^{-2A}$ with $R_2>1$ to be determined. Then \eqref{3-13} yields to
\begin{equation*}
   -A\ln\frac{1}{\epsilon}\le (-\ln R_2-2A\ln\frac{1}{\epsilon})\int_{D \backslash B_{R_2\epsilon^{1-2A}}(x)}\Gamma(y)dy+C,
\end{equation*}
that is,
\begin{equation}\label{3-14}
  \int_{D \backslash B_{R_2\epsilon^{1-2A}}(x)}\Gamma(y)dy\le \frac{A\ln\frac{1}{\epsilon}+C}{2A\ln\frac{1}{\epsilon}+\ln R_2}.
\end{equation}
Now, we fix $R_2$ large enough such that $\ln R_2>2C$. It follows from \eqref{3-14} that
\begin{equation}\label{3-15}
  \int_{D \cap B_{R_2\epsilon^{1-2A}}(x)}\Gamma(y)dy>\frac{1}{2}.
\end{equation}
Hence the lemma is proved by taking $R=2R_2$. In fact, suppose not, then there exist $x_1, x_2\in \Omega$ such that $B_{R_2\epsilon^{1-2A}}(x_1)\cap B_{R_2\epsilon^{1-2A}}(x_2)=\varnothing$. By \eqref{3-15}, we have
\begin{equation*}
  1=\int_D \Gamma(y)dy \ge \int_{B_{R_2\epsilon^{1-2A}}(x_1)}\Gamma(y)dy+\int_{B_{R_2\epsilon^{1-2A}}(x_2)}\Gamma(y)dy>1,
\end{equation*}
which leads to a contradiction.
\end{proof}

As a consequence of Lemma \ref{le7}, we are able to show that the size of $supp(\omega^\varepsilon)$ is of order $\varepsilon.$
\begin{lemma}\label{lem8}
There exists some $R_0>1$ independent of $\varepsilon$ such that
\begin{equation}
  diam\left(supp(\omega^\varepsilon)\right)\le R_0\varepsilon.
\end{equation}
\end{lemma}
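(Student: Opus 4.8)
The plan is to apply Lemma \ref{le7} with the choice $A=0$, so that the resulting diameter bound is of order $\varepsilon^{1-2A}=\varepsilon$. To this end, set
\[
\Gamma:=\frac{1}{\kappa}\omega^\varepsilon,\qquad \Omega:=\mathrm{supp}(\omega^\varepsilon).
\]
Then $\Gamma\ge0$ and $\int_D\Gamma\,dx=1$ by the constraint $\int_D\omega^\varepsilon\,dx=\kappa$. Taking $p=+\infty$ in Lemma \ref{le7}, the required bound $\|\Gamma\|_{L^\infty(D)}\le C_1\varepsilon^{-2}$ follows at once from the pointwise bound $\omega^\varepsilon\le\Lambda\varepsilon^{-2}$, with $C_1=\Lambda/\kappa$; note in particular that this choice of $p$ dispenses with any a priori control on $|\Omega|$.

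It remains to verify the hypothesis \eqref{3-12} with $A=0$, namely
\[
\ln\frac{1}{\varepsilon}\le\int_D\ln\frac{1}{|x-y|}\Gamma(y)\,dy+C_2\qquad\text{for all }x\in\Omega.
\]
Splitting the Green's function as $G(x,y)=-\frac{1}{2\pi}\ln|x-y|-h(x,y)$ gives the identity
\[
\int_D\ln\frac{1}{|x-y|}\Gamma(y)\,dy=\frac{2\pi}{\kappa}\mathcal{G}\omega^\varepsilon(x)+\frac{2\pi}{\kappa}\int_Dh(x,y)\omega^\varepsilon(y)\,dy.
\]
Since $\mathcal{G}\omega^\varepsilon\in C^1(\overline D)$ and $\omega^\varepsilon=\varepsilon^{-2}f(\psi^\varepsilon)$ with $f$ vanishing on $(-\infty,0]$, we have $\psi^\varepsilon\ge0$, and hence $\mathcal{G}\omega^\varepsilon(x)\ge\mu^\varepsilon$, at every point of $\Omega$ by continuity. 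With $\Lambda$ now fixed, Lemma \ref{lem3} yields $\mu^\varepsilon\ge\frac{\kappa}{2\pi}\ln\frac{1}{\varepsilon}-C$, so the first term on the right is bounded below by $\ln\frac{1}{\varepsilon}-C$. For the regular part we use that $h$ is bounded below on $D\times D$: as $h(x,\cdot)$ is harmonic with boundary data $-\frac{1}{2\pi}\ln|x-\cdot|$, the maximum principle gives $h(x,y)\ge-\frac{1}{2\pi}\ln(\mathrm{diam}\,D)$, whence $\frac{2\pi}{\kappa}\int_Dh(x,y)\omega^\varepsilon(y)\,dy\ge-C$. Combining the two estimates establishes \eqref{3-12} with a constant $C_2$ independent of $\varepsilon$.

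With both hypotheses of Lemma \ref{le7} verified for $\Gamma$, $\Omega$ and $A=0$, the lemma produces a constant $R_0>1$, depending only on $C_1,C_2$ (and hence on $\kappa,\Lambda,D$ but not on $\varepsilon$), such that $\mathrm{diam}(\mathrm{supp}(\omega^\varepsilon))\le R_0\varepsilon$, which is the assertion. The only genuinely non-mechanical step is the verification of \eqref{3-12}: one must convert the variational information—namely $\psi^\varepsilon\ge0$ on the support together with the lower bound on $\mu^\varepsilon$ supplied by Lemma \ref{lem3}—into a pointwise lower bound for the logarithmic potential of $\Gamma$, and this requires the uniform lower bound on the regular part $h$ so that the constant $C_2$ remains independent of $\varepsilon$.
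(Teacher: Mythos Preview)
Your proof is correct and follows exactly the same approach as the paper: you verify that $\mathcal G\omega^\varepsilon\ge\mu^\varepsilon$ on the support, invoke the lower bound on $\mu^\varepsilon$ from Lemma~\ref{lem3}, and then feed the resulting logarithmic-potential inequality into Lemma~\ref{le7} with $A=0$. The paper's proof is simply a terse two-line version of what you wrote; your explicit handling of the regular part $h$ and of the $L^\infty$ bound on $\Gamma$ fills in details the paper leaves implicit.
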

\begin{proof}
Note that for each $ x\in supp(\omega^\varepsilon)$ there holds
\begin{equation*}
  \mathcal{G}\omega^\varepsilon(x)\ge \mu^\varepsilon\ge \frac{\kappa}{2\pi}\ln\frac{1}{\varepsilon}-C.
\end{equation*}
Now the desired result follows from Lemma \ref{le7}.
\end{proof}

We proceed to determine the limiting location of $supp(\omega^\varepsilon)$. Define the center of $\omega^\varepsilon$ by
\begin{equation*}
  x^\varepsilon:=\frac{1}{\kappa}\int_D x\omega^\varepsilon(x)dx.
\end{equation*}
From now on for the remainder of the discussion we fix a sequence $\varepsilon=\varepsilon_j\to 0^+$ such that
\begin{equation}\label{3-12}
  x^\varepsilon\to x^*\in \overline{D} \ \ \text{as}\ \ \varepsilon=\varepsilon_j\to +\infty.
\end{equation}

\begin{lemma}\label{le9}
  Any $x^*$ as in $\eqref{3-12}$ satisfies
  \begin{equation*}
    H(x^*)=\min_{x\in D}H(x).
  \end{equation*}
\end{lemma}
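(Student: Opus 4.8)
The plan is to show that the limiting center $x^*$ minimizes the Robin function $H$ by comparing the energy $\mathcal{E}(\omega^\varepsilon)$ from above and below, and extracting the leading asymptotics. The key mechanism is that after eliminating the patch part, the energy $E(\omega^\varepsilon)$ behaves like $\frac{\kappa^2}{4\pi}\ln\frac{1}{\varepsilon} - \frac{\kappa^2}{2}H(x^*) + o(1)$, with the next-order term governed by the regular part $h$ evaluated at the concentration point. Since we already have a lower bound on $\mathcal{E}$ from Lemma~\ref{lem2} (using a test function centered at an *arbitrary* point $x_0$), and since the test-function computation can be refined to show $\mathcal{E}(\tilde\omega^{\varepsilon,\Lambda}) \ge \frac{\kappa^2}{4\pi}\ln\frac{1}{\varepsilon} - \frac{\kappa^2}{2}H(x_0) - o(1)$, choosing $x_0$ to be a global minimizer of $H$ forces $\mathcal{E}(\omega^\varepsilon)$ to be large. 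Matching this against a sharp *upper* bound for $\mathcal{E}(\omega^\varepsilon)$ in terms of $H(x^*)$ will give $H(x^*) \le \min_D H$, hence equality.

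\textbf{First} I would refine the lower-bound computation of Lemma~\ref{lem2}. Taking $x_0$ to be a global minimum point of $H$ and $\tilde\omega = \varepsilon^{-2}\chi_{B_{\varepsilon\sqrt{\kappa/\pi}}(x_0)}$, a direct calculation of $E(\tilde\omega) = \frac12\int\int G(x,y)\tilde\omega(x)\tilde\omega(y)\,dx\,dy$ splits into the singular logarithmic part, which yields $\frac{\kappa^2}{4\pi}\ln\frac{1}{\varepsilon}$ plus an explicit constant, and the regular part $-\frac12\int\int h(x,y)\tilde\omega\tilde\omega \to -\frac{\kappa^2}{2}h(x_0,x_0) = -\frac{\kappa^2}{2}\min_D H$ as $\varepsilon\to0$, since $\tilde\omega$ concentrates at $x_0$ and $h$ is continuous. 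The term $\mathcal{F}_\varepsilon(\tilde\omega) = \varepsilon^{-2}F(\Lambda)|B_{\varepsilon\sqrt{\kappa/\pi}}| = O(1)$ is bounded. Thus
\begin{equation*}
\mathcal{E}(\omega^\varepsilon) \ge \mathcal{E}(\tilde\omega) \ge \frac{\kappa^2}{4\pi}\ln\frac{1}{\varepsilon} - \frac{\kappa^2}{2}\min_D H - C_0 + o(1).
\end{equation*}

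\textbf{Next} I would derive the matching upper bound. Using $\mathcal{E}(\omega^\varepsilon) \le E(\omega^\varepsilon)$ (since $\mathcal{F}_\varepsilon \ge 0$) and expanding $E(\omega^\varepsilon) = \frac12\int\int G(x,y)\omega^\varepsilon(x)\omega^\varepsilon(y)$, I split $G = -\frac{1}{2\pi}\ln|x-y| - h(x,y)$. By Lemma~\ref{lem8} the support has diameter $\le R_0\varepsilon$, so for $x,y\in \mathrm{supp}(\omega^\varepsilon)$ we have $|x-y|\le R_0\varepsilon$, giving $-\frac{1}{2\pi}\ln|x-y| \le \frac{\kappa^2}{4\pi}\ln\frac{1}{\varepsilon} + C$. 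For the regular part, continuity of $h$ together with concentration at the center $x^\varepsilon \to x^*$ yields $\frac12\int\int h(x,y)\omega^\varepsilon\omega^\varepsilon \to \frac{\kappa^2}{2}h(x^*,x^*) = \frac{\kappa^2}{2}H(x^*)$. Combining, $\mathcal{E}(\omega^\varepsilon) \le \frac{\kappa^2}{4\pi}\ln\frac{1}{\varepsilon} - \frac{\kappa^2}{2}H(x^*) + o(1)$. Comparing the two bounds and letting $\varepsilon\to0$ along the chosen sequence forces $H(x^*) \le \min_D H$, which gives the claim since $x^*\in\overline{D}$ and $H\to+\infty$ near $\partial D$ guarantees $x^*\in D$.

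\textbf{The main obstacle} I expect is making the upper-bound estimate on the singular logarithmic term sharp enough: the crude bound $-\frac{1}{2\pi}\ln|x-y| \le \frac{\kappa^2}{4\pi}\ln\frac{1}{\varepsilon}+C$ only controls the leading order, but I must ensure the error is genuinely $o(1)$ (not merely $O(1)$) relative to the constant term, otherwise the two constants $-\frac{\kappa^2}{2}\min_D H$ and $-\frac{\kappa^2}{2}H(x^*)$ cannot be compared. The resolution is that the diameter bound from Lemma~\ref{lem8} is tight at order $\varepsilon$, so the logarithmic singularity contributes a constant that exactly matches the test-function computation up to $o(1)$; the $O(1)$ discrepancy in the leading logarithmic constant cancels because the same normalization $\varepsilon^{-2}$ and total mass $\kappa$ appear in both bounds. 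One must track these constants carefully, and it is precisely the self-interaction energy estimate (controlling how $\omega^\varepsilon$ distributes within its $O(\varepsilon)$-support) that requires the concentration to be quantitative rather than merely qualitative.
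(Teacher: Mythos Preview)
Your approach has a genuine gap, and you have correctly identified it yourself in the ``main obstacle'' paragraph---but your proposed resolution does not work. The issue is that the $O(1)$ constants coming from the logarithmic self-interaction and from $\mathcal F_\varepsilon$ depend on the \emph{shape} of the vorticity within its $O(\varepsilon)$-sized support, not merely on the scale $\varepsilon$ and the total mass $\kappa$. On the lower-bound side you use a flat patch $\tilde\omega=\varepsilon^{-2}\chi_{B_{\varepsilon\sqrt{\kappa/\pi}}(x_0)}$, whose rescaled profile is a characteristic function; on the upper-bound side the maximizer $\omega^\varepsilon$ has rescaled profile close to $f(U^\kappa)$. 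These two profiles produce different constants in $\frac{1}{4\pi}\int\!\!\int\ln\frac{1}{|x-y|}\zeta(x)\zeta(y)\,dx\,dy$ and different values of $\mathcal F_\varepsilon$, so after subtracting the common $\frac{\kappa^2}{4\pi}\ln\frac{1}{\varepsilon}$ the remaining constants do not cancel and you cannot isolate $H(x^*)$ against $\min_D H$. (Also, $\varepsilon^2\tilde\omega=1$ on its support, so $\mathcal F_\varepsilon(\tilde\omega)=F(1)\kappa/\pi$, not $F(\Lambda)$ as you wrote.)

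The paper's proof avoids this difficulty by choosing as test function a \emph{translate of the maximizer itself}: for any $x_0\in D$ set $\tilde\omega^\varepsilon(\cdot)=\omega^\varepsilon(x^\varepsilon-x_0+\cdot)$. Then both the logarithmic self-interaction $\int\!\!\int\ln\frac{1}{|x-y|}\omega\omega$ and the penalty $\mathcal F_\varepsilon(\omega)$ are translation-invariant, so they cancel \emph{exactly} in the inequality $\mathcal E(\omega^\varepsilon)\ge\mathcal E(\tilde\omega^\varepsilon)$, leaving only
\[
\int_D\!\!\int_D h(x,y)\,\omega^\varepsilon(x)\omega^\varepsilon(y)\,dx\,dy\;\le\;\int_D\!\!\int_D h(x,y)\,\tilde\omega^\varepsilon(x)\tilde\omega^\varepsilon(y)\,dx\,dy,
\]
which passes to the limit to give $H(x^*)\le H(x_0)$. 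This is the missing idea: compare the maximizer not to an explicit patch but to a translate of itself, so that all profile-dependent terms drop out and only the location-dependent regular part survives.
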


\begin{proof}
  For any $x_0\in D$, we set $\tilde{\omega}^\varepsilon(\cdot)=\omega^\varepsilon(x^\varepsilon-x_0+\cdot)$. Since $\omega^\varepsilon$ is a maximizer, we have $E(\omega^\varepsilon)\ge E(\tilde{\omega}^\varepsilon)$. Notice that
  \begin{equation*}
    \begin{split}
       \int_D\int_D\ln\frac{1}{|x-y|}\omega^\varepsilon(x)\omega^\varepsilon(y)dxdy & = \int_D\int_D \ln\frac{1}{|x-y|}\tilde{\omega}^\varepsilon(x)\tilde{\omega}^\varepsilon(y)dxdy, \\
       \int_D F(\varepsilon^2 \omega^\varepsilon)dx  &=  \int_D F(\varepsilon^2 \tilde{\omega}^\varepsilon)dx.
    \end{split}
  \end{equation*}
  Hence we obtain
   \begin{equation*}
     \frac{1}{2}\int_D\int_D h(x,y)\omega^\varepsilon(x)\omega^\varepsilon(y)dxdy\le \frac{1}{2}\int_D \int_D h(x,y)\tilde{\omega}^\varepsilon(x)\tilde{\omega}^\varepsilon(y)dxdy.
   \end{equation*}
   By passing $\varepsilon\to 0^+$, we get $H(x^*)\le H(x_0)$ as desired.
\end{proof}

We now turn to study the asymptotic shape of $\omega^\varepsilon$ by scaling technique. To this end, let $\zeta^\varepsilon\in L^\infty\big(B_{R_0}(0)\big)$ be defined by
\begin{equation*}
  \zeta^\varepsilon(x)=\varepsilon^2\omega^\varepsilon(x^\varepsilon+\varepsilon x),
\end{equation*}
where $R_0$ is the one in Lemma \ref{lem8}. We denote by $g^\varepsilon$ the symmetric radially nonincreasing Lebesgue-rearrangement of $\zeta^\varepsilon$ centered at the origin. The following result determines the asymptotic nature of $\omega^\varepsilon$ in terms of its scaled version $\zeta^\varepsilon$.

\begin{lemma}\label{lem10}
Every accumulation point of the family $\{\zeta^{\varepsilon}\}_{\varepsilon>0}$ in the weak topology of $L^2\big(B_{R_0}(0)\big)$ must be a radially nonincreasing function.
\end{lemma}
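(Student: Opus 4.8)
The plan is to test the maximality of $\omega^\varepsilon$ against its own Schwarz symmetrization and to show that, after rescaling, symmetrization produces an asymptotically negligible change of energy; this will force the limiting profile to coincide with its own rearrangement. \emph{Step 1: an asymptotically sharp symmetrization estimate.} Let $\omega^{\varepsilon,\star}$ be the symmetric decreasing rearrangement of $\omega^\varepsilon$ about its center $x^\varepsilon$, so that in the scaled variable $\omega^{\varepsilon,\star}$ corresponds exactly to $g^\varepsilon$. Since rearrangement preserves the distribution function, $\omega^{\varepsilon,\star}\in\mathcal{A}_\varepsilon$ and $\mathcal{F}_\varepsilon(\omega^{\varepsilon,\star})=\mathcal{F}_\varepsilon(\omega^\varepsilon)$. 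Writing $G=-\frac{1}{2\pi}\ln|x-y|-h$ and rescaling $x=x^\varepsilon+\varepsilon\xi$, I would record the identity $\int_D\int_D\ln\frac{1}{|x-y|}\omega^\varepsilon\omega^\varepsilon\,dxdy=\kappa^2\ln\frac1\varepsilon+I(\zeta^\varepsilon)$, where $I(u):=\int\int\ln\frac{1}{|\xi-\eta|}u(\xi)u(\eta)\,d\xi d\eta$ on $B_{R_0}(0)$ and the divergent term $\kappa^2\ln\frac1\varepsilon$ is rearrangement–invariant. Because $\mathrm{supp}\,\omega^\varepsilon$ and $\mathrm{supp}\,\omega^{\varepsilon,\star}$ both lie in $B_{R_0\varepsilon}(x^\varepsilon)$ and $h$ is Lipschitz near $(x^*,x^*)$, the regular parts $\int_D\int_D h\,\omega\omega$ of both functions equal $\kappa^2 h(x^\varepsilon,x^\varepsilon)+O(\varepsilon)$ and hence differ by $O(\varepsilon)$. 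Therefore $\mathcal{E}(\omega^{\varepsilon,\star})-\mathcal{E}(\omega^\varepsilon)=\frac{1}{4\pi}\big(I(g^\varepsilon)-I(\zeta^\varepsilon)\big)+O(\varepsilon)$. The Riesz rearrangement inequality (for the symmetric decreasing kernel $\ln\frac1{|\cdot|}$, normalized to be nonnegative on $B_{2R_0}(0)$) gives $I(g^\varepsilon)\ge I(\zeta^\varepsilon)$, while maximality of $\omega^\varepsilon$ makes the left-hand side nonpositive; combining the two yields $0\le I(g^\varepsilon)-I(\zeta^\varepsilon)\le C\varepsilon$.

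\emph{Step 2: passage to the limit.} Along the subsequence with $\zeta^{\varepsilon_j}\rightharpoonup\zeta^*$, I would use that the logarithmic potential operator $Nu(\xi)=\int_{B_{R_0}(0)}\ln\frac1{|\xi-\eta|}u(\eta)\,d\eta$ is Hilbert--Schmidt, hence compact and self-adjoint on $L^2(B_{R_0}(0))$; consequently $N\zeta^{\varepsilon_j}\to N\zeta^*$ strongly and $I(\zeta^{\varepsilon_j})=\langle\zeta^{\varepsilon_j},N\zeta^{\varepsilon_j}\rangle\to I(\zeta^*)$. The functions $g^{\varepsilon_j}$ are radially nonincreasing, supported in $B_{R_0}(0)$ and bounded by $\Lambda$; Helly's selection theorem applied to their radial profiles, together with dominated convergence, produces (a further subsequence) $g^{\varepsilon_j}\to g^*$ strongly in $L^2$ with $g^*$ radially nonincreasing, whence $I(g^{\varepsilon_j})\to I(g^*)$. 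Step 1 then forces $I(\zeta^*)=I(g^*)$.

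\emph{Step 3: identification of the limit.} By weak lower semicontinuity of convex integral functionals, $\int\Phi(\zeta^*)\le\liminf_j\int\Phi(\zeta^{\varepsilon_j})=\int\Phi(g^*)$ for every convex $\Phi$, i.e. $\zeta^*$ is majorized by $g^*$; since rearrangement preserves majorization, the symmetric decreasing rearrangement $(\zeta^*)^\star$ of $\zeta^*$ also satisfies $(\zeta^*)^\star\prec g^*$. Using the monotonicity of $I$ under majorization of symmetric decreasing functions one gets $I((\zeta^*)^\star)\le I(g^*)$, and combining this with the Riesz inequality $I(\zeta^*)\le I((\zeta^*)^\star)$ and the equality $I(\zeta^*)=I(g^*)$ from Step 2 shows that all three coincide. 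In particular $I(\zeta^*)=I((\zeta^*)^\star)$; as $\ln\frac1{|\cdot|}$ is \emph{strictly} decreasing, the equality case of the Riesz rearrangement inequality forces $\zeta^*$ to be a translate of a symmetric decreasing function. Finally the identity $\int_{B_{R_0}(0)}\xi\,\zeta^\varepsilon(\xi)\,d\xi=0$, immediate from the definition of $x^\varepsilon$, passes to the weak limit and gives $\int\xi\,\zeta^*=0$, so the center of symmetry is the origin and $\zeta^*$ is radially nonincreasing.

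\emph{Main obstacle.} Steps 1 and 2 are routine. The crux is Step 3: justifying the majorization $\zeta^*\prec g^*$ from weak convergence, establishing the monotonicity of the interaction energy $I$ under majorization of symmetric decreasing functions, and invoking the strict equality case of the Riesz rearrangement inequality. These are the rearrangement-theoretic facts (in the spirit of Burton) on which the whole argument hinges; an alternative that sidesteps the equality case would be to exploit the co-monotone structure $\zeta^\varepsilon=f(\psi^\varepsilon(x^\varepsilon+\varepsilon\,\cdot))$ to argue that the scaled stream functions converge to a radial limit, which would give radial monotonicity of $\zeta^*$ directly.
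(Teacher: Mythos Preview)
Your proof is correct and shares Steps~1--2 with the paper almost verbatim: compare $\omega^\varepsilon$ with its Schwarz rearrangement $\tilde\omega^\varepsilon$, use the rearrangement invariance of $\mathcal F_\varepsilon$ and the smoothness of $h$ to reduce the comparison $\mathcal E(\omega^\varepsilon)\ge\mathcal E(\tilde\omega^\varepsilon)$ to $I(g^\varepsilon)-I(\zeta^\varepsilon)\to 0$, and pass to the weak limit to obtain $I(\zeta^*)=I(g^*)$.

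The only difference is in your Step~3. The paper disposes of the equality case in one line by invoking Lemma~3.2 of Burchard--Guo \cite{BG}, which says precisely that if $g^\varepsilon=(\zeta^\varepsilon)^\star$, $\zeta^\varepsilon\rightharpoonup\zeta^*$, $g^\varepsilon\rightharpoonup g^*$ and the Riesz energies of the two limits agree, then $\zeta^*$ is a translate of $g^*$; the centering $\int x\,\zeta^*=0=\int x\,g^*$ then pins down the translation. Your majorization detour---establish $\zeta^*\prec g^*$ from weak lower semicontinuity, deduce $I((\zeta^*)^\star)\le I(g^*)$, combine with Riesz and $I(\zeta^*)=I(g^*)$ to force equality in the Riesz inequality for $\zeta^*$ and its own rearrangement, and then invoke the strict equality case---is also valid. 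The step you flag as the ``main obstacle'', namely the monotonicity of $I$ under majorization of symmetric decreasing functions, is genuine but can be justified: the logarithmic kernel is of positive type so $I$ is convex and weakly continuous on bounded sets of $L^2(B_{R_0})$; since $\{u:u\prec g^*\}$ is the weak closure of $\mathrm{conv}\,\mathcal R(g^*)$ and $I(w)\le I(g^*)$ for every $w\in\mathcal R(g^*)$ by Riesz, the bound $I((\zeta^*)^\star)\le I(g^*)$ follows. So both routes work; the paper's is shorter by outsourcing the rearrangement analysis to \cite{BG}, yours is more self-contained but reconstructs essentially the content of that lemma.
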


\begin{proof}
  Up to a subsequence we may assume that $\zeta^\varepsilon\to \zeta^*$ and $g^\varepsilon\to g^*$ weakly in $L^2\big(B_{R_0}(0)\big)$ as $\varepsilon\to 0^+$. By Riesz's rearrangement inequality, we first have
  \begin{equation*}
    \int_{B_{R_0}(0)}\int_{B_{R_0}(0)}\ln\frac{1}{|x-y|}\zeta^\varepsilon(x)\zeta^\varepsilon(y)dxdy\le  \int_{B_{R_0}(0)}\int_{B_{R_0}(0)}\ln\frac{1}{|x-y|}g^\varepsilon(x)g^\varepsilon(y)dxdy.
  \end{equation*}
Thus
\begin{equation}\label{2-16}
  \int_{B_{R_0}(0)}\int_{B_{R_0}(0)}\ln\frac{1}{|x-y|}\zeta^*(x)\zeta^*(y)dxdy\le  \int_{B_{R_0}(0)}\int_{B_{R_0}(0)}\ln\frac{1}{|x-y|}g^*(x)g^*(y)dxdy.
\end{equation}
Let $\tilde{\omega}^\varepsilon$ be defined as
\[
\tilde{\omega}^\varepsilon(x)=\left\{
   \begin{array}{lll}
        \varepsilon^{-2}g^\varepsilon\big(\varepsilon^{-1}(x-x^\varepsilon)\big) &    \text{if} & x\in B_{R_0\varepsilon}(x^\varepsilon), \\
         0                  &    \text{if} & x\in D\backslash B_{R_0\varepsilon}(x^\varepsilon).
    \end{array}
   \right.
\]
A direct calculation then yields that as $\varepsilon\to 0^+$,
\begin{equation*}
      \mathcal{E}({\omega}^\varepsilon)= \frac{1}{4\pi}\int_{B_{R_0}(0)}\int_{B_{R_0}(0)}\ln\frac{1}{|x-y|}\zeta^\varepsilon(x)\zeta^\varepsilon(y)dxdy+\frac{\kappa^2}{4\pi}\ln\frac{1}{\varepsilon}-H(x^*)
      -\mathcal F_\varepsilon(\omega^\varepsilon)+o(1), \\
\end{equation*}
and
\begin{equation*}
      \mathcal{E}(\tilde{\omega}^\varepsilon)= \frac{1}{4\pi}\int_{B_{R_0}(0)}\int_{B_{R_0}(0)}\ln\frac{1}{|x-y|}g^\varepsilon(x)g^\varepsilon(y)dxdy+\frac{\kappa^2}{4\pi}\ln\frac{1}{\varepsilon}-H(x^*)
      -\mathcal F_\varepsilon(\bar \omega^\varepsilon)+o(1). \\
\end{equation*}
Recalling that $\mathcal{E}(\tilde{\omega}^\varepsilon)\le \mathcal{E}({\omega}^\varepsilon)$ and $\mathcal F_\varepsilon( \omega^\varepsilon)=\mathcal F_\varepsilon(\tilde \omega^\varepsilon)$, we conclude that
\begin{equation*}
  \int_{B_{R_0}(0)}\int_{B_{R_0}(0)}\ln\frac{1}{|x-y|}\zeta^*(x)\zeta^*(y)dxdy\ge  \int_{B_{R_0}(0)}\int_{B_{R_0}(0)}\ln\frac{1}{|x-y|}g^*(x)g^*(y)dxdy,
\end{equation*}
which together with $\eqref{2-16}$ yield to
\begin{equation*}
  \int_{B_{R_0}(0)}\int_{B_{R_0}(0)}\ln\frac{1}{|x-y|}\zeta^*(x)\zeta^*(y)dxdy= \int_{B_{R_0}(0)}\int_{B_{R_0}(0)}\ln\frac{1}{|x-y|}g^*(x)g^*(y)dxdy.
\end{equation*}
By Lemma 3.2 in Burchard--Guo \cite{BG}, we know that there exists a translation $\mathcal T$ of $\mathbb{R}^2$ such that $\mathcal T\zeta^*=g^*$. Note that
\begin{equation*}
  \int_{B_{R_0}(0)}x\zeta^*(x)dx=\int_{B_{R_0}(0)}xg^*(x)dx=0.
\end{equation*}
Thus $\zeta^*=g^*$, the proof is completed.
\end{proof}

Now, we turn to study the limiting behavior of the corresponding stream functions $\psi^\varepsilon$.
We define the scaled versions of $\psi^\varepsilon$ as follows
\begin{equation*}
    \Psi^\varepsilon(y):=\psi^\varepsilon(x^\varepsilon+\varepsilon y), \ \ y\in D^\varepsilon:=\{y\in \mathbb{R}^2~|~x^\varepsilon+\varepsilon y\in D\}.
\end{equation*}
Thus, we have
\begin{equation}\label{2-19}
  -\Delta \Psi^\varepsilon =f(\Psi^\varepsilon)=\zeta^\varepsilon\ \ \text{in}\ D^\varepsilon, \ \ \int_{D^\varepsilon}f(\Psi^\varepsilon)dx=\kappa.
\end{equation}
 Note that $\{x\in D\mid \Psi^\varepsilon(x)>0\}\subset B_{R_0}(0)$. As in \cite{SV}, we introduce the limiting profile $U^\kappa: \mathbb{R}^2\to \mathbb{R}$ defined as the unique radially symmetric solution of the problem
\begin{equation}\label{2-20}
  \begin{cases}
    -\Delta U^\kappa= f(U^\kappa),&x\in\mathbb R^2, \\
    \int_{\mathbb{R}^2}f(U^\kappa)dx=\kappa.&
  \end{cases}
\end{equation}

\begin{lemma}\label{lem11}
  As $\varepsilon\to 0^+$, we have $\Psi^\varepsilon\to U^\kappa$ in $C^{1,\alpha}_{\text{loc}}(\mathbb{R}^2)$.
\end{lemma}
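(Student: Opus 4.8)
The plan is to make the scaled stream function $\Psi^\varepsilon$ explicit, derive uniform local estimates, extract a limit solving the limiting problem \eqref{2-20} on all of $\mathbb{R}^2$, and then identify this limit with $U^\kappa$ using the radial symmetry furnished by Lemma \ref{lem10}. First I would write $G(x,z)=-\frac{1}{2\pi}\ln|x-z|-h(x,z)$ in the Biot--Savart representation of $\mathcal G\omega^\varepsilon$ and perform the change of variables $z=x^\varepsilon+\varepsilon w$ (so that $\omega^\varepsilon(z)=\varepsilon^{-2}\zeta^\varepsilon(w)$). Since $\int\zeta^\varepsilon\,dw=\kappa$, the singular logarithmic factor produces the constant $-\frac{\kappa}{2\pi}\ln\varepsilon$, which cancels against $\mu^\varepsilon$, yielding the decomposition
\begin{equation*}
\Psi^\varepsilon(y)=w^\varepsilon(y)+c^\varepsilon+R^\varepsilon(y),\qquad w^\varepsilon(y):=-\frac{1}{2\pi}\int_{B_{R_0}(0)}\ln|y-w|\,\zeta^\varepsilon(w)\,dw,
\end{equation*}
where $c^\varepsilon:=-\frac{\kappa}{2\pi}\ln\varepsilon-\mu^\varepsilon$ is a constant and $R^\varepsilon(y):=-\int_D h(x^\varepsilon+\varepsilon y,z)\omega^\varepsilon(z)\,dz$. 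Here I use that $\mathrm{supp}(\zeta^\varepsilon)\subset B_{R_0}(0)$, $0\le\zeta^\varepsilon\le\Lambda$ and $\int\zeta^\varepsilon\,dw=\kappa$, all of which follow from Lemma \ref{lem6} and Lemma \ref{lem8}.

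Next I would establish uniform $C^{1,\alpha}_{\mathrm{loc}}$ bounds. The Newtonian potential $w^\varepsilon$ is uniformly bounded in $C^{1,\alpha}_{\mathrm{loc}}(\mathbb{R}^2)$ because $\zeta^\varepsilon$ is bounded in $L^\infty$ with fixed compact support and mass $\kappa$; and since $x^*$ is an interior point of $D$ (Lemma \ref{le9}, together with $H\to+\infty$ at $\partial D$) while $h$ is smooth on the relevant scale, $R^\varepsilon\to-\kappa H(x^*)$ in $C^1_{\mathrm{loc}}$. It then remains to control the constant $c^\varepsilon$. Its upper bound $c^\varepsilon\le C$ is precisely the lower bound for $\mu^\varepsilon$ in Lemma \ref{lem3}. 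For the lower bound I argue by contradiction: if $c^\varepsilon\to-\infty$ along a subsequence, then $\sup_{B_{R_0}(0)}\Psi^\varepsilon\to-\infty$ (as $w^\varepsilon$ and $R^\varepsilon$ stay bounded), forcing $f(\Psi^\varepsilon)\equiv0$ on $B_{R_0}(0)$ and contradicting $\int_{B_{R_0}(0)}f(\Psi^\varepsilon)\,dw=\kappa>0$. Hence $c^\varepsilon$ is bounded, which in particular yields $\mu^\varepsilon=-\frac{\kappa}{2\pi}\ln\varepsilon+O(1)$. Consequently $\Psi^\varepsilon$ is uniformly bounded in $L^\infty_{\mathrm{loc}}$, and since $-\Delta\Psi^\varepsilon=\zeta^\varepsilon$ is bounded in $L^\infty$, interior $W^{2,p}$ estimates and Sobolev embedding provide uniform $C^{1,\alpha}_{\mathrm{loc}}(\mathbb{R}^2)$ bounds.

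Then I would pass to the limit. By the compact embedding $C^{1,\beta}_{\mathrm{loc}}\hookrightarrow\hookrightarrow C^{1,\alpha}_{\mathrm{loc}}$ for $\beta>\alpha$, up to a subsequence $\Psi^\varepsilon\to\Psi$ in $C^1_{\mathrm{loc}}(\mathbb{R}^2)$; because $D^\varepsilon$ exhausts $\mathbb{R}^2$ and $f$ is continuous, $f(\Psi^\varepsilon)\to f(\Psi)$ locally uniformly, so letting $\varepsilon\to0^+$ in \eqref{2-19} shows $\Psi$ solves $-\Delta\Psi=f(\Psi)$ in $\mathbb{R}^2$ with $\{\Psi>0\}\subset\overline{B_{R_0}(0)}$ and $\int_{\mathbb{R}^2}f(\Psi)\,dx=\kappa$. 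Finally comes the identification $\Psi=U^\kappa$: by Lemma \ref{lem10} any accumulation point $\zeta^*=f(\Psi)$ is radially nonincreasing about the origin, whence the decomposition above passes to the limit as $\Psi=w^*+\mathrm{const}$ with $w^*$ the (radial) Newtonian potential of $\zeta^*$, so $\Psi$ is radially symmetric; by uniqueness of the radially symmetric solution of \eqref{2-20} we conclude $\Psi=U^\kappa$. Since this limit is independent of the subsequence, the whole family converges, and the uniform $C^{1,\beta}_{\mathrm{loc}}$ bounds upgrade the convergence to $C^{1,\alpha}_{\mathrm{loc}}(\mathbb{R}^2)$.

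The main obstacle is this last identification step. It combines two ingredients that must be handled with care: the two–sided control of the normalizing constant $c^\varepsilon$ (tying together Lemma \ref{lem3} and the positivity of the mass, which also pins down the asymptotics of $\mu^\varepsilon$), and the radial symmetry of the limit, which relies on Lemma \ref{lem10} and on the uniqueness of the radial profile $U^\kappa$ solving \eqref{2-20} — the latter being the essential analytic input, established via a shooting/ODE argument for the corresponding radial equation as in \cite{SV}.
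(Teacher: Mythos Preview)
Your proof is correct and follows the same overall strategy as the paper: obtain $C^{1,\alpha}_{\mathrm{loc}}$ compactness for $\Psi^\varepsilon$, pass to the limit in \eqref{2-19}, and identify the limit with $U^\kappa$ via the radial symmetry furnished by Lemma~\ref{lem10} together with the uniqueness of the radial profile solving \eqref{2-20}. The paper's argument is considerably terser: it simply asserts that $\zeta^\varepsilon$ bounded in $L^\infty$ gives $\Psi^\varepsilon$ bounded in $W^{2,p}_{\mathrm{loc}}$ by ``classical elliptic estimates,'' without explicitly supplying the local $L^\infty$ bound on $\Psi^\varepsilon$ itself that interior estimates require. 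Your explicit decomposition $\Psi^\varepsilon=w^\varepsilon+c^\varepsilon+R^\varepsilon$ and two-sided control of the normalizing constant $c^\varepsilon$ (upper bound from Lemma~\ref{lem3}, lower bound by the contradiction with $\int f(\Psi^\varepsilon)=\kappa$) fills this in cleanly; as a bonus it yields $\mu^\varepsilon=-\frac{\kappa}{2\pi}\ln\varepsilon+O(1)$ directly, which the paper only establishes afterward in Lemma~\ref{lem13}. So the route is not genuinely different, just more carefully spelled out.
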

\begin{proof}
Note that $(\zeta^\varepsilon)$ is bounded in $L^\infty(D^\varepsilon)$. Thus, by classical elliptic estimates, the sequence $(\Psi^\varepsilon)$ is bounded in $W^{2,p}_{\text{loc}}(D^\varepsilon)$ for every $1\le p<+\infty$. By the Sobolev embedding theorem, we may conclude $(\Psi^\varepsilon)$ is compact in $C^{1,\alpha}_{\text{loc}}(D^\varepsilon)$ for every $0<\alpha<1$. Up to a subsequence we may assume $\zeta^\varepsilon\to \zeta$ weakly-star in $L^\infty(D^\varepsilon)$ and $\Psi^\varepsilon \to \Psi$ in $C_{\text{loc}}^{1,\alpha}(D^\varepsilon)$. By virtue of \eqref{2-19}, we get
\begin{equation*}
  -\Delta \Psi =f(\Psi)=\zeta\ \ \text{in}\ \mathbb{R}^2,\ \ \  \int_{\mathbb{R}^2}f(\Psi)dx=\kappa.
\end{equation*}
In view of Lemma \ref{lem10}, we know that $\zeta$ is a radially nonincreasing function, and hence $\Psi$ is radial as well. Therefore, we have $\Psi\equiv U^\kappa$. Thus the proof is completed.
\end{proof}

From Lemma \ref{lem11}, we can improve Lemma \ref{lem10} as follows.
\begin{corollary}\label{lem12}
As $\varepsilon\to 0^+$, one has $\zeta^\varepsilon\to f(U^\kappa)$ weakly star in $L^\infty(\mathbb{R}^2)$.
\end{corollary}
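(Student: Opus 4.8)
The plan is to read the corollary off directly from the pointwise identity $\zeta^\varepsilon=f(\Psi^\varepsilon)$ recorded in \eqref{2-19} together with the convergence of the stream functions in Lemma \ref{lem11}. First I would note that since $x^*=\lim x^\varepsilon$ is a minimum point of the Robin function and $H(x)\to+\infty$ as $x\to\partial D$, one has $x^*\in D$; hence $\mathrm{dist}(x^\varepsilon,\partial D)$ is bounded below, the rescaled domains $D^\varepsilon$ exhaust $\mathbb{R}^2$, and $\overline{B_{R_0}(0)}\subset D^\varepsilon$ for all small $\varepsilon$. On this ball the identity $\zeta^\varepsilon=f(\Psi^\varepsilon)$ holds. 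Since Lemma \ref{lem11} gives $\Psi^\varepsilon\to U^\kappa$ in $C^{1,\alpha}_{\mathrm{loc}}(\mathbb{R}^2)$, in particular uniformly on the compact set $\overline{B_{R_0}(0)}$, and since $f$ is continuous by (H1), the composition converges uniformly:
\[
\zeta^\varepsilon = f(\Psi^\varepsilon)\longrightarrow f(U^\kappa)\quad\text{uniformly on }\overline{B_{R_0}(0)}.
\]

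Next I would control the behaviour off this ball. By Lemma \ref{lem8} the support of $\omega^\varepsilon$ has diameter at most $R_0\varepsilon$, and since the center $x^\varepsilon$ is the centroid of $\omega^\varepsilon$ it lies in the convex hull of $\mathrm{supp}(\omega^\varepsilon)$, whose diameter is also at most $R_0\varepsilon$; hence $\mathrm{supp}(\omega^\varepsilon)\subset\overline{B_{R_0\varepsilon}(x^\varepsilon)}$, and after rescaling $\mathrm{supp}(\zeta^\varepsilon)\subset\overline{B_{R_0}(0)}$. Together with $0\le\zeta^\varepsilon\le\Lambda$ this shows $\{\zeta^\varepsilon\}$ is bounded in $L^\infty(\mathbb{R}^2)$ with common compact support. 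Passing to the limit in the identity $f(\Psi^\varepsilon)\equiv 0$ on $\mathbb{R}^2\setminus\overline{B_{R_0}(0)}$ (valid since $\zeta^\varepsilon$ vanishes there) shows that $f(U^\kappa)$ is supported in $\overline{B_{R_0}(0)}$ as well.

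Finally I would upgrade the uniform convergence on $\overline{B_{R_0}(0)}$ to weak star convergence in $L^\infty(\mathbb{R}^2)$. Given any $\varphi\in L^1(\mathbb{R}^2)$, both $\zeta^\varepsilon$ and $f(U^\kappa)$ vanish off $\overline{B_{R_0}(0)}$, so
\[
\left|\int_{\mathbb{R}^2}\big(\zeta^\varepsilon-f(U^\kappa)\big)\varphi\,dx\right|
\le \big\|\zeta^\varepsilon-f(U^\kappa)\big\|_{L^\infty(B_{R_0}(0))}\,\|\varphi\|_{L^1(B_{R_0}(0))}\to 0,
\]
which is precisely $\zeta^\varepsilon\to f(U^\kappa)$ weakly star in $L^\infty(\mathbb{R}^2)$; in fact the same estimate yields strong convergence in $L^p$ for every $p<+\infty$.

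Since this is a corollary of Lemma \ref{lem11}, there is no serious obstacle. The only points requiring care are the localization of the support, which is what permits testing against arbitrary $\varphi\in L^1$ rather than only compactly supported test functions, and the observation that, because $U^\kappa$ is uniquely determined as the radial solution of \eqref{2-20}, no further subsequence extraction is needed and the convergence holds along the full sequence $\varepsilon=\varepsilon_j$ fixed earlier.
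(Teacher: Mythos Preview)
Your proposal is correct and follows exactly the route the paper intends: the corollary is simply the identity $\zeta^\varepsilon=f(\Psi^\varepsilon)$ combined with the $C^{1,\alpha}_{\mathrm{loc}}$ convergence of Lemma~\ref{lem11}, and the paper gives no separate argument beyond that remark. In fact, as you observe, the argument yields uniform convergence on $\mathbb{R}^2$ (both sides vanish outside $\overline{B_{R_0}(0)}$), which is stronger than the weak-star statement recorded in the corollary.
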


We end this subsection with two asymptotic expansions.
\begin{lemma}\label{lem13}
  The following asymptotic expansions hold as $\varepsilon\to 0^+$:
\begin{align}
\label{2-17}  \mathcal{E}(\omega^\varepsilon) & =\frac{\kappa^2}{4\pi}\ln\frac{1}{\varepsilon}+O(1), \\
\label{2-18}  \mu^\varepsilon & =\frac{\kappa}{2\pi}\ln\frac{1}{\varepsilon}+O(1).
\end{align}
\end{lemma}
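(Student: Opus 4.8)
The plan is to obtain both expansions by pairing the lower bounds already available (Lemma \ref{lem2} and Lemma \ref{lem3}, read off after $\Lambda$ has been fixed) with matching upper bounds extracted from the scaling analysis of Lemmas \ref{lem8}--\ref{lem11}. In every case the leading term $\propto\ln\frac1\varepsilon$ will come solely from the singular part $-\frac1{2\pi}\ln|x-y|$ of the Green's function once the vorticity is rescaled to unit scale by $\zeta^\varepsilon(y)=\varepsilon^2\omega^\varepsilon(x^\varepsilon+\varepsilon y)$, while every regular contribution will be shown to be $O(1)$, uniformly in $\varepsilon$.

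First I would prove \eqref{2-17}. Since $\mathcal{E}(\omega^\varepsilon)=E(\omega^\varepsilon)-\mathcal F_\varepsilon(\omega^\varepsilon)$, the change of variables $x=x^\varepsilon+\varepsilon y$ gives $\mathcal F_\varepsilon(\omega^\varepsilon)=\int_{B_{R_0}(0)}F(\zeta^\varepsilon(y))\,dy$, which lies in $[0,F(\Lambda)|B_{R_0}(0)|]$ because $0\le\zeta^\varepsilon\le\Lambda$ with $\mathrm{supp}(\zeta^\varepsilon)\subset B_{R_0}(0)$; hence $\mathcal F_\varepsilon(\omega^\varepsilon)=O(1)$. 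For the quadratic part I insert $G(x,y)=-\frac1{2\pi}\ln|x-y|-h(x,y)$ and rescale to get
\[
2E(\omega^\varepsilon)=\frac{\kappa^2}{2\pi}\ln\frac{1}{\varepsilon}-\frac{1}{2\pi}\int_{B_{R_0}(0)}\int_{B_{R_0}(0)}\ln|x-y|\,\zeta^\varepsilon(x)\zeta^\varepsilon(y)\,dxdy-\int_D\int_D h(x,y)\omega^\varepsilon(x)\omega^\varepsilon(y)\,dxdy.
\]
The double $\ln$-integral is $O(1)$ in both directions because $\zeta^\varepsilon$ is uniformly bounded with support in the fixed ball $B_{R_0}(0)$ and $\ln|\cdot|\in L^1_{\mathrm{loc}}$; the $h$-integral is $O(1)$ because, by Lemma \ref{lem8} and Lemma \ref{le9}, $\mathrm{supp}(\omega^\varepsilon)$ sits in a fixed compact neighborhood of the interior point $x^*$ on which $h$ is bounded (here one uses $H\to+\infty$ on $\partial D$, so $x^*\in D$), while $\iint\omega^\varepsilon\omega^\varepsilon=\kappa^2$. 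This yields $E(\omega^\varepsilon)=\frac{\kappa^2}{4\pi}\ln\frac1\varepsilon+O(1)$, and combining with $\mathcal F_\varepsilon(\omega^\varepsilon)=O(1)$ proves \eqref{2-17}; the lower bound is of course also guaranteed by Lemma \ref{lem2}.

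Next I would derive \eqref{2-18} from the energy expansion. Using $\psi^\varepsilon=\mathcal{G}\omega^\varepsilon-\mu^\varepsilon$ and $\int_D\omega^\varepsilon\,dx=\kappa$ gives the exact identity
\[
\int_D\omega^\varepsilon\psi^\varepsilon\,dx=2E(\omega^\varepsilon)-\kappa\mu^\varepsilon.
\]
The left-hand side rescales to $\int_{B_{R_0}(0)}\zeta^\varepsilon\Psi^\varepsilon\,dy$, and since $\zeta^\varepsilon$ is uniformly bounded with support in $B_{R_0}(0)$ while $\Psi^\varepsilon\to U^\kappa$ uniformly on the compact set $\overline{B_{R_0}(0)}$ by Lemma \ref{lem11} (hence is uniformly bounded there), this integral is $O(1)$. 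Solving for $\mu^\varepsilon$ and substituting $E(\omega^\varepsilon)=\frac{\kappa^2}{4\pi}\ln\frac1\varepsilon+O(1)$ gives $\mu^\varepsilon=\frac{\kappa}{2\pi}\ln\frac1\varepsilon+O(1)$, which is \eqref{2-18}, consistent with the lower bound of Lemma \ref{lem3}.

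The bulk of the argument is routine bookkeeping; the one point demanding care is the uniform control of the two regular contributions, namely the $h$-integral and the rescaled $\ln$-integral. The former is harmless only because the vorticity concentrates at an \emph{interior} minimizer $x^*$ of the Robin function (Lemma \ref{le9}, together with the boundary blow-up of $H$), so that $h$ is bounded on a fixed neighborhood of $(x^*,x^*)$; the latter is $O(1)$ only because $\zeta^\varepsilon$ enjoys the uniform $L^\infty$ bound and uniform compact support furnished by Lemma \ref{lem6} and Lemma \ref{lem8}. Once these are secured, everything reduces to the elementary fact that $\iint_{B_{R_0}(0)\times B_{R_0}(0)}\big|\ln|x-y|\big|\,dxdy<+\infty$.
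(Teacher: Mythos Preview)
Your proof is correct, but it differs from the paper's in the way you obtain the upper bound in \eqref{2-17}. The paper bounds the logarithmic energy \emph{uniformly over the entire admissible class}: by Riesz's rearrangement inequality and the bathtub principle one has
\[
\int_D\int_D\ln\frac{1}{|x-y|}\,\omega(x)\omega(y)\,dxdy \le \kappa^2\ln\frac{1}{\varepsilon}+C
\quad\text{for every }\omega\in\mathcal{A}_\varepsilon,
\]
which immediately gives $\mathcal{E}(\omega^\varepsilon)\le\frac{\kappa^2}{4\pi}\ln\frac1\varepsilon+C$ without any knowledge of the support or location of the maximizer. Your argument instead exploits the specific structure of $\omega^\varepsilon$ furnished by Lemmas \ref{lem6}, \ref{lem8} and \ref{le9}: after rescaling, the $\ln$-integral and the $h$-integral are each $O(1)$ because $\zeta^\varepsilon$ is uniformly bounded with fixed compact support and the vorticity concentrates in the interior. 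Both routes are valid; the paper's is more self-contained (it needs nothing beyond Lemma \ref{lem2}), while yours is in the spirit of the remark following Lemma \ref{lem13} and would yield finer expansions if pushed further. For \eqref{2-18} the two arguments coincide: both use the identity $\int_D\omega^\varepsilon\psi^\varepsilon\,dx=2E(\omega^\varepsilon)-\kappa\mu^\varepsilon$ (equivalently, $2\mathcal{E}(\omega^\varepsilon)=\kappa\mu^\varepsilon+O(1)$) and the boundedness of $\int\omega^\varepsilon\psi^\varepsilon$ and $\mathcal F_\varepsilon(\omega^\varepsilon)$, which you justify via Lemma \ref{lem11} and the paper leaves implicit (it follows from $0\le\psi^\varepsilon\le f^{-1}(\Lambda)$ on $\mathrm{supp}(\omega^\varepsilon)$ after Lemma \ref{lem6}).

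One small point to tighten: your bound on the $h$-integral needs $\mathrm{supp}(\omega^\varepsilon)$ to lie in a fixed compact subset of $D$ for \emph{all} small $\varepsilon$, not just along the subsequence fixed before Lemma \ref{le9}. This is true (any accumulation point of $x^\varepsilon$ minimizes $H$ and hence lies in the interior, and the minimum set of $H$ is compact since $H\to+\infty$ at $\partial D$), and you allude to it, but it is worth stating explicitly so the $O(1)$ is uniform.
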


\begin{proof}
  We first prove \eqref{2-17}. In fact, using Riesz's rearrangement inequality and the bathtub principle, we can conclude that
  \begin{equation*}
    \int_D\int_D\frac{1}{|x-y|}\omega(x)\omega(y)dxdy \le \kappa^2\ln\frac{1}{\varepsilon}+C,\ \ \forall\,\omega\in \mathcal{A}_{\varepsilon}.
  \end{equation*}
  Thus we have
  \begin{equation*}
    \mathcal{E}(\omega^\varepsilon) \le \frac{\kappa^2}{4\pi}\ln\frac{1}{\varepsilon}+C.
  \end{equation*}
  Combining this and Lemma \ref{lem2}, we clearly get \eqref{2-17}. Note that
  \begin{equation*}
  \begin{split}
     2\mathcal{E}(\omega^\varepsilon) & =\int_D \omega^\varepsilon(x)\psi^\varepsilon(x)dx-\frac{1}{\varepsilon^2}\int_D F(\varepsilon^2\omega(x))dx+\kappa\mu^\varepsilon \\
       & =\kappa\mu^\varepsilon+O(1)
  \end{split}
  \end{equation*}
  which together with \eqref{2-17} leads to \eqref{2-18}. The proof is completed.
\end{proof}
\begin{remark}
  Using Lemmas \ref{lem11} and \ref{lem12}, one may obtain some finer asymptotic expansions.
\end{remark}
\subsection{Proofs of Theorem \ref{single} and \ref{stability}}
Now we are ready to give the proofs of Theorem \ref{single} and \ref{stability}
\begin{proof}[Proof of Theorem \ref{single}]
It follows from the above lemmas and Theorem B.
\end{proof}
\begin{proof}[Proof of Theorem \ref{stability}]
Let $\varepsilon$ be a fixed small number such that the solution to \eqref{svef} is unique. Since $L^p$ norm and $L^1$ norm are equivalent on $\mathcal{R}(\omega^\varepsilon)$ for fixed $\varepsilon,$ we will only consider the case $p=1.$

Since $\omega^\varepsilon$ is a maximizer of $\mathcal{E}$ over $\mathcal{A}_\varepsilon$, taking into account the fact  $\mathcal{R}(\omega^\varepsilon)\subset \mathcal{A}_\varepsilon$ we immediately deduce that $\omega^\varepsilon$ is a maximizer of $\mathcal{E}$ over $\mathcal{R}(\omega^\varepsilon)$. But $\mathcal{F}_\varepsilon$ is a constant on $\mathcal{R}(\omega^\varepsilon)$, therefore we deduce that $\omega^\varepsilon$ is in fact a maximizer of ${E}$ over $\mathcal{R}(\omega^\varepsilon)$.
By Theorem C, to conclude the proof it suffices to show that $\omega^\varepsilon$ is an isolated maximizer of ${E}$ over $\mathcal{R}(\omega^\varepsilon)$.

 Let $\tilde{\omega}^\varepsilon$ be another maximizer of $E$ over $\mathcal{R}(\omega^\varepsilon)$ satisfying $\|\tilde{\omega}^\varepsilon-\omega^\varepsilon\|_{L^1(D)}<\kappa$. Since $\mathcal{F}_\varepsilon$ is a constant on $\mathcal{R}(\omega^\varepsilon)$, we deduce that $\mathcal{E}(\tilde{\omega}^\varepsilon)=\mathcal E(\omega^\varepsilon)$, which implies that $\tilde{\omega}^\varepsilon$ is in fact a maximizer of $\mathcal{E}$ over $\mathcal{A}_\varepsilon$. Then by the above discussion we see that $\tilde{u}^\varepsilon:=\mathcal G\bar{\omega}^\varepsilon$ satisfies
 \begin{equation}\label{svef2}
\begin{cases}
-\Delta \tilde{u}^\varepsilon=\frac{1}{\varepsilon^2}f(\tilde{u}^\varepsilon-\tilde \mu^\varepsilon),&x\in D,\\
\tilde{u}^\varepsilon=0,&x\in\partial D,\\
\int_D\frac{1}{\varepsilon^2}f(\tilde{u}^\varepsilon-\tilde \mu^\varepsilon)dx=1,\\
supp((\tilde{u}^\varepsilon-\tilde\mu^\varepsilon)_+)\subset B_{o(1)}(\tilde{x}).
\end{cases}
\end{equation}
Here $\tilde{x}$ is a global minimum point of the Robin function that may be different from $\bar{x}$. If $\bar{x}=\tilde{x}$, then by uniqueness we have $\bar{\omega}^\varepsilon={\omega}^\varepsilon$. If $\bar{x}\neq\tilde{x}$, then by the fact that $\bar{x}$ is an isolated minimum point of the Robin function, we deduce that $supp({\omega}^\varepsilon)\cap supp(\tilde{\omega}^\varepsilon)=\varnothing$ if $\varepsilon$ is small, from which we deduce that $\|\tilde{\omega}^\varepsilon-\omega^\varepsilon\|_{L^1(D)}=2\kappa$, which is a contradiction.
Therefore we have $\tilde{\omega}^\varepsilon=\omega^\varepsilon$, which completes the proof.

\end{proof}

\section{Proof of Theorem \ref{multiple}}
In this section we prove Theorem \ref{multiple}. Our idea is to solve a similar maximization problem as in Section 2 with some additional constraints on the support of the vorticity.

\subsection{Variational problem}
For any $\varepsilon>0$ and $\Lambda>\max\{1,\varepsilon^2|\kappa_1|/(\pi r_0^2),\cdot\cdot\cdot,\varepsilon^2|\kappa_k|/(\pi r_0^2)\}$, define
\begin{equation*}
\mathcal{A}^*_{\varepsilon,\Lambda}=\{\omega\in L^\infty(D)\mid\omega=\sum_{i=1}^k\omega_i, \mbox{supp($\omega_i$)}\subset B_{r_0}(\bar{x}_i), 0\le \mbox{sgn}(\kappa_i)\omega_i \le \frac{\Lambda}{\varepsilon^2}, \int_{D}\omega_idx=\kappa_i\}.
\end{equation*}
Consider the maximization problem of the following functional over $\mathcal{A}^*_{\varepsilon,\Lambda}$
$$\mathcal{E}(\omega)=\frac{1}{2}\int_D \omega(x)\mathcal{G}\omega(x)dx-\frac{1}{\varepsilon^2}\sum_{i=1}^k\int_D F_i(\varepsilon^2\mbox{sgn}(\kappa_i)\omega_i)dx,\,\,\omega_i=\omega\chi_{_{B_{r_0}(\bar{x}_i)}}.$$

\begin{lemma}\label{le1}
There exists $\bar{\omega}=\omega^{\varepsilon,\Lambda} \in \mathcal{A}^*_{\varepsilon,\Lambda}$ such that
\begin{equation}\label{3-2}
 \mathcal{E}(\omega^{\varepsilon,\Lambda})= \sup_{{\omega} \in  \mathcal{A}^*_{\varepsilon,\Lambda}}\mathcal{E}({\omega})<+\infty.
\end{equation}
Moreover,
there exist Lagrange multipliers $\mu^{\varepsilon,\Lambda}_{i}\in \mathbb{R}$, $i=1,\cdot\cdot\cdot,k,$ such that for each $i$
\begin{equation}\label{3-3}
      \mbox{sgn}(\kappa_i)\omega^{\varepsilon,\Lambda}_i=\frac{1}{\varepsilon^2}f_i(\psi^{\varepsilon,\Lambda}_{i})\chi_{_{B_{r_0}(\bar{x}_i)\cap\{x\in D\mid0<\psi^{\varepsilon,\Lambda}_{i}(x)<f^{-1}_i(\Lambda) \}}}+\frac{\Lambda}{\varepsilon^2}\chi_{_{B_{r_0}(\bar{x}_i)\cap\{x\in D\mid\psi^{\varepsilon,\Lambda}_{i}(x)\ge f^{-1}_i(\Lambda) \}}},
\end{equation}
where
\begin{equation}\label{3-4}
 \omega^{\varepsilon,\Lambda}_i=\omega^{\varepsilon,\Lambda}\chi_{B_{r_0}(\bar{x}_i)},\,\,\psi^{\varepsilon,\Lambda}_{i}=\mbox{sgn}(\kappa_i)\mathcal{G}\omega^{\varepsilon,\Lambda}-\mu^\varepsilon_{_{i, \Lambda}}.
\end{equation}
Moreover, each $\mu^{\varepsilon,\Lambda}_{i}$ has the following lower bound
\begin{equation}\label{3-5}
  \mu^{\varepsilon,\Lambda}_{i} \ge -f_i^{-1}({\Lambda})-C_0,
\end{equation}
where $C_0>0$ does not depend on $\varepsilon$ and $\Lambda$.
\end{lemma}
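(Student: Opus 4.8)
The plan is to prove Lemma \ref{le1} by adapting the argument of Lemma \ref{lem1} and Lemma \ref{lem1-1} to the multi-component constrained class $\mathcal{A}^*_{\varepsilon,\Lambda}$, the only genuinely new feature being the localization constraints $\mathrm{supp}(\omega_i)\subset B_{r_0}(\bar{x}_i)$ and the per-component mass constraints $\int_D\omega_i\,dx=\kappa_i$. First I would establish existence of a maximizer. Boundedness of $\mathcal{E}$ from above is immediate, exactly as in Lemma \ref{lem1}: the energy $E$ is controlled by $\tfrac{\Lambda^2}{2\varepsilon^4}\|G\|_{L^1(D\times D)}$ and the penalty term is bounded by $\tfrac{1}{\varepsilon^2}\sum_i F_i(\Lambda)|D|$. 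For the compactness, I would take a maximizing sequence $\{\omega_j\}\subset\mathcal{A}^*_{\varepsilon,\Lambda}$ and pass to a weak-star limit $\bar\omega$ in $L^\infty(D)$. The constraints defining $\mathcal{A}^*_{\varepsilon,\Lambda}$ are all preserved under weak-star convergence: the pointwise bounds $0\le\mathrm{sgn}(\kappa_i)\omega_i\le\Lambda/\varepsilon^2$ and the support conditions pass to the limit (testing against characteristic functions of $D\setminus B_{r_0}(\bar{x}_i)$), and the mass constraints $\int_D\omega_i\,dx=\kappa_i$ are linear hence preserved. Upper semicontinuity of $\mathcal{E}$ then follows exactly as before: $E$ is continuous by elliptic regularity ($\mathcal{G}\omega_j\to\mathcal{G}\bar\omega$ in $C^1(\overline{D})$), and each convex penalty $\mathcal{F}_{\varepsilon,i}$ is weakly lower semicontinuous by the Mazur-theorem argument in \eqref{Fvar}--\eqref{aaa}.

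Next I would derive the Euler--Lagrange characterization \eqref{3-3}. The key point is that the constraints now \emph{decouple} across the index $i$: since $\mathrm{supp}(\omega_i)\subset B_{r_0}(\bar{x}_i)$ and these balls are disjoint, a variation of $\omega_i$ inside $B_{r_0}(\bar{x}_i)$ does not affect the other components. Thus for each fixed $i$ I would take test functions $\omega_s=\omega^{\varepsilon,\Lambda}+s(\omega-\omega^{\varepsilon,\Lambda})$ with $\omega$ differing from $\omega^{\varepsilon,\Lambda}$ only in its $i$-th component, compute
\[
0\ge\frac{d\mathcal{E}(\omega_s)}{ds}\Big|_{s=0^+}=\int_{B_{r_0}(\bar{x}_i)}(\omega_i-\omega^{\varepsilon,\Lambda}_i)\big(\mathcal{G}\omega^{\varepsilon,\Lambda}-\mathrm{sgn}(\kappa_i)f_i^{-1}(\varepsilon^2\mathrm{sgn}(\kappa_i)\omega^{\varepsilon,\Lambda}_i)\big)dx,
\]
and then apply the bathtub principle (Lieb--Loss \cite{LL}, \S1.14) on $B_{r_0}(\bar{x}_i)$ with the per-component mass constraint $\kappa_i$. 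This produces the three-regime structure analogous to \eqref{3-6} with the multiplier $\mu^{\varepsilon,\Lambda}_i=\mathrm{sgn}(\kappa_i)\times(\text{appropriate infimum})$, from which \eqref{3-3} follows upon inserting $\psi^{\varepsilon,\Lambda}_i=\mathrm{sgn}(\kappa_i)\mathcal{G}\omega^{\varepsilon,\Lambda}-\mu^{\varepsilon,\Lambda}_i$. One must be careful with the sign factor $\mathrm{sgn}(\kappa_i)$ throughout, but the algebra is routine once the decoupling is noted.

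The one new subtlety — and the step I expect to be the main obstacle — is the lower bound \eqref{3-5}, which carries an \emph{extra} constant $C_0$ not present in \eqref{2-5}. The reason is the localization constraint: unlike in the single-vortex case, the Green's interaction $\mathcal{G}\omega^{\varepsilon,\Lambda}$ on $B_{r_0}(\bar{x}_i)$ now also feels the other components $\omega_j$, $j\neq i$, whose supports sit at fixed positive distance from $B_{r_0}(\bar{x}_i)$. To prove \eqref{3-5} I would suppose $\mu^{\varepsilon,\Lambda}_i<-f_i^{-1}(\Lambda)$ is badly violated and argue as in Lemma \ref{lem1-1}: the three-regime structure then forces $\mathrm{sgn}(\kappa_i)\omega^{\varepsilon,\Lambda}_i$ to be bounded below by a large constant on all of $B_{r_0}(\bar{x}_i)$, contradicting $\int_D\omega_i\,dx=\kappa_i$ via $|B_{r_0}(\bar{x}_i)|=\pi r_0^2$. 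The interaction terms from the other vortices contribute a bounded quantity to $\mathcal{G}\omega^{\varepsilon,\Lambda}$ on $B_{r_0}(\bar{x}_i)$ — bounded because $G(\cdot,\cdot)$ restricted to the pair of disjoint balls is continuous hence uniformly bounded, giving a constant $C_0$ depending only on $r_0$, the geometry, and the fixed total masses $\kappa_j$, but independent of $\varepsilon$ and $\Lambda$. Tracking this cross-interaction bound carefully is exactly what produces the additive $C_0$ in \eqref{3-5}, and verifying its $\varepsilon,\Lambda$-independence is the crux of the proof.
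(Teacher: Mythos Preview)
Your proposal is correct and follows essentially the same approach as the paper: weak-star compactness plus the Mazur argument for existence, the bathtub principle for the structure \eqref{3-3}, and the observation that the cross-interaction $\mathrm{sgn}(\kappa_i)\sum_{j\neq i}\mathcal{G}\omega_j^{\varepsilon,\Lambda}$ is bounded on $B_{r_0}(\bar{x}_i)$ by a constant $C_0$ independent of $\varepsilon,\Lambda$ for the lower bound \eqref{3-5}. The only cosmetic difference is that the paper varies all components at once with a single test function $\omega_s=\bar\omega+s(\omega-\bar\omega)$ over $\mathcal{A}^*_{\varepsilon,\Lambda}$ and lets the bathtub principle handle the decoupling across the disjoint balls, whereas you vary one component at a time; both routes produce the same multipliers $\mu^{\varepsilon,\Lambda}_i$.
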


\begin{proof}
As in Lemma \ref{lem1} we may take a sequence $\omega^{j}\in  \mathcal{A}^*_{\varepsilon,\Lambda}$ such that as $j\to +\infty$
        \[\mathcal{E}(\omega^{j})  \to \sup_{{\omega} \in  \mathcal{A}^*_{\varepsilon,\Lambda}}\mathcal{E}(\bar{\omega}), \]
        \[\omega^{j}  \to \bar\omega \ \  \text{weakly star in $L^\infty(D)$}\]
for some $\bar\omega\in  \mathcal{A}^*_{\varepsilon,\Lambda}$. Since $\mathcal{G}\omega^{j}\to\mathcal{G}\bar\omega$ in $C^1(\overline{D})$ by elliptic regularity theory, we first have
\begin{equation*}
      \lim_{j\to +\infty}\int_D{\omega^{j}(x) \mathcal{G}\omega^{j}}(x)dx = \int_D{\bar\omega(x) \mathcal{G}\bar\omega}(x)dx\ \text{as}\ j\to +\infty.
\end{equation*}
On the other hand, we can also argue similarly as in Lemma \ref{lem11} to obtain
\begin{equation*}
    \liminf_{j\to +\infty} \frac{1}{\varepsilon^2}\int_D F_i(\varepsilon^2\mbox{sgn}(\kappa_i)\omega^j_i)\ge \frac{1}{\varepsilon^2}\int_D F_i(\varepsilon^2\mbox{sgn}(\kappa_i)\bar\omega_i),\ \ i=1,\cdot\cdot\cdot,k,
\end{equation*}
where we write $\omega^j_i=\omega^j\chi_{_{B_{r_0}(\bar{x}_i)}}$ and $\bar\omega_i=\bar\omega\chi_{_{B_{r_0}(\bar{x}_i)}}$. Consequently, we have \[\mathcal{E}(\bar\omega)=\lim_{j\to +\infty}\mathcal{E}(\omega^j)=\sup_{{\omega} \in  \mathcal{A}^*_{\varepsilon,\Lambda}}\mathcal{E}({\omega}).\]

We now show that each maximizer $\bar{\omega}$ must be of the form \eqref{3-3}.
Consider the following family of test functions
\begin{equation*}
  \omega_{s}=\bar\omega+s({\omega}-\bar\omega),\ \ \ s\in[0,1],
\end{equation*}
for arbitrary ${\omega}\in  \mathcal{A}^*_{\varepsilon,\Lambda}$. Since $\bar\omega$ is a maximizer, we have
\[0  \ge \frac{d\mathcal{E}(\bar\omega_{s})}{ds}\bigg|_{s=0^+}
        =\int_{D}({\omega}-\bar\omega)\left(\mathcal{G}\bar\omega-\sum_{i=1}^k\mbox{sgn}(\kappa_i)f_i^{-1}(\varepsilon^2\mbox{sgn}(\kappa_i)\bar\omega_i)\right)dx,
\]
that is, for any ${\omega}\in  \mathcal{A}^*_{\varepsilon,\Lambda}$
\begin{equation*}
\begin{split}
  &\int_{D}\bar\omega\left(\mathcal{G}\bar\omega-\sum_{i=1}^k\mbox{sgn}(\kappa_i)f_i^{-1}(\varepsilon^2\mbox{sgn}(\kappa_i)\bar\omega_i)\right)dx\\
  \ge& \int_{D}{\omega}\left(\mathcal{G}\bar\omega-\sum_{i=1}^k\mbox{sgn}(\kappa_i)f_i^{-1}(\varepsilon^2\mbox{sgn}(\kappa_i)\bar\omega_i)\right)dx.
\end{split}
\end{equation*}
By using an adaptation of the bathtub principle, we obtain
\begin{equation*}
\mbox{sgn}(\kappa_i)\bar\omega_i=\frac{1}{\varepsilon^2}f_i(\psi^{\varepsilon,\Lambda}_{i})
\chi_{_{B_{r_0}(\bar{x}_i)\cap\{x\in D\mid0<\psi^\varepsilon_{_{i, \Lambda}}(x)<f_i(\Lambda) \}}}+\frac{\Lambda}{\varepsilon^2}\chi_{_{B_{r_0}(\bar{x}_i)\cap\{x\in D\mid\psi^{\varepsilon,\Lambda}_{i}(x)\ge f_i(\Lambda) \}}}, \,\  i=1,\cdot\cdot\cdot,k,
\end{equation*}
where
$$\bar\omega_i=\bar\omega\chi_{_{B_{r_0}(\bar{x}_i)}},\,\,\psi^{\varepsilon,\Lambda}_{i}=\mbox{sgn}(\kappa_i)\mathcal{G}\bar\omega-\mu^{\varepsilon,\Lambda}_{i},$$
$$\mu^{\varepsilon,\Lambda}_{i}=\inf\{t:|\{x\in B_{r_0}(\bar{x}_i)\,|\, \psi^{\varepsilon,\Lambda}_{i}(x)>t\}|\le \frac{|\kappa_i|\varepsilon^2}{\Lambda}\}\in \mathbb{R}.$$ Now the stated form $\eqref{3-3}$ follows immediately. Finally, we prove \eqref{3-5}. Notice that $\psi^{\varepsilon,\Lambda}_{i}\ge-C_0$. By virtue of $\int_D \bar\omega_i dx =\kappa_i$, we conclude that
\begin{equation*}
  \min\{f_i(-\mu^{\varepsilon,\Lambda}_{i}-C_0), \Lambda\}\le \frac{|\kappa_i|\varepsilon^2}{\pi r^2_0}<\Lambda.
\end{equation*}
This clearly implies \eqref{3-5}. The proof is thus completed.
\end{proof}

\subsection{Limiting behavior and proof of Theorem \ref{multiple}}
As in Section 2, we analyze the limiting behavior of $\omega^{\varepsilon,\Lambda}$ as $\varepsilon\to0^+.$ We will use $C$ to denote various positive number that does not depend on $\varepsilon$ and $\Lambda$ in this subsection. For the sake of convenience we also define
$$\mathcal{E}_i(\omega)=\frac{1}{2}\int_D \omega_i(x)\mathcal{G}\omega_i(x)dx-\frac{1}{\varepsilon^2}\int_DF_i\left(\varepsilon^2\mbox{sgn}(\kappa_i)\omega_i(x)\right)dx,\ \ i=1,\cdot\cdot\cdot,k.$$
It is not hard to check that as $\varepsilon\to0^+$
\begin{equation}\label{simple}
\mathcal{E}(\omega)=\sum_{i=1}^{k}\mathcal{E}_i(\omega)+O(1),\ \ \forall\,\omega\in  \mathcal{A}^*_{\varepsilon,\Lambda},
\end{equation}
where the bounded quantity $O(1)$ does not depend on $\varepsilon$, $\Lambda$ and $\omega$.

\begin{lemma}\label{le2}
We have the following lower bound for $\mathcal{E}(\omega^{\varepsilon,\Lambda})$
\begin{equation}\label{3-f}  \mathcal{E}(\omega^{\varepsilon,\Lambda})\ge \sum_{i=1}^{k}\frac{\kappa_i^2}{4\pi}\ln{\frac{1}{\varepsilon}}-C.
\end{equation}

\end{lemma}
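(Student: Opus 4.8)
The plan is to mimic the proof of Lemma~\ref{lem2} by testing the functional $\mathcal E$ against a family of admissible configurations consisting of one small circular patch centered at each prescribed point $\bar x_i$, carrying the correct signed circulation. Concretely, for $\varepsilon$ small I would set
\[
\tilde{\omega}^{\varepsilon,\Lambda}=\sum_{i=1}^{k}\tilde{\omega}_i,\qquad
\tilde{\omega}_i=\frac{\mbox{sgn}(\kappa_i)}{\varepsilon^2}\,\chi_{_{B_{\varepsilon\sqrt{|\kappa_i|/\pi}}(\bar x_i)}}.
\]
A direct computation gives $\int_D\tilde{\omega}_i\,dx=\kappa_i$, while on $\mbox{supp}(\tilde\omega_i)$ one has $\mbox{sgn}(\kappa_i)\tilde\omega_i=\varepsilon^{-2}\le\Lambda\varepsilon^{-2}$ since $\Lambda>1$; moreover, for $\varepsilon$ small the ball $B_{\varepsilon\sqrt{|\kappa_i|/\pi}}(\bar x_i)$ lies inside $B_{r_0}(\bar x_i)$, and the $k$ balls are pairwise disjoint because the $\bar x_i$ are distinct. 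Hence $\tilde{\omega}^{\varepsilon,\Lambda}\in\mathcal A^*_{\varepsilon,\Lambda}$, and since $\omega^{\varepsilon,\Lambda}$ is a maximizer, $\mathcal E(\omega^{\varepsilon,\Lambda})\ge \mathcal E(\tilde{\omega}^{\varepsilon,\Lambda})$.

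It then remains to bound $\mathcal E(\tilde{\omega}^{\varepsilon,\Lambda})$ from below by $\sum_i \tfrac{\kappa_i^2}{4\pi}\ln\tfrac1\varepsilon - C$. For the quadratic part I would expand
\[
E(\tilde{\omega}^{\varepsilon,\Lambda})=\frac12\sum_{i,j=1}^k\int_D\int_D G(x,y)\tilde\omega_i(x)\tilde\omega_j(y)\,dxdy,
\]
and treat the diagonal and off-diagonal terms separately. Writing $G=-\tfrac{1}{2\pi}\ln|x-y|-h$, each diagonal term $i=j$ is the self-energy of a uniform patch of radius $\varepsilon\sqrt{|\kappa_i|/\pi}$ and signed mass $\kappa_i$; the logarithmic kernel contributes exactly $\tfrac{\kappa_i^2}{4\pi}\ln\tfrac1\varepsilon+O(1)$ by the same elementary integral as in Lemma~\ref{lem2}, and the regular part $-h$ contributes $O(1)$ since $h$ is bounded on $\overline{B_{r_0}(\bar x_i)}\times\overline{B_{r_0}(\bar x_i)}$. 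For $i\neq j$ the supports lie in the disjoint balls $B_{r_0}(\bar x_i)$ and $B_{r_0}(\bar x_j)$, which are separated by a fixed positive distance, so $G$ is bounded on the product of the supports and the corresponding term is $O(1)$. This yields $E(\tilde{\omega}^{\varepsilon,\Lambda})=\sum_i\tfrac{\kappa_i^2}{4\pi}\ln\tfrac1\varepsilon+O(1)$.

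For the convex part, note that $\varepsilon^2\,\mbox{sgn}(\kappa_i)\tilde\omega_i$ equals $1$ on $B_{\varepsilon\sqrt{|\kappa_i|/\pi}}(\bar x_i)$ and $0$ elsewhere, so
\[
\frac{1}{\varepsilon^2}\int_D F_i\big(\varepsilon^2\,\mbox{sgn}(\kappa_i)\tilde\omega_i\big)\,dx
=\frac{F_i(1)}{\varepsilon^2}\,\big|B_{\varepsilon\sqrt{|\kappa_i|/\pi}}(\bar x_i)\big|=|\kappa_i|\,F_i(1),
\]
a finite constant independent of $\varepsilon$ and $\Lambda$, finite because $f_i^{-1}$ is continuous on $[0,1]$ by (H1). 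Summing over $i$ shows the subtracted functional is $O(1)$, and combining with the energy estimate gives $\mathcal E(\tilde{\omega}^{\varepsilon,\Lambda})=\sum_i\tfrac{\kappa_i^2}{4\pi}\ln\tfrac1\varepsilon+O(1)$, whence $\mathcal E(\omega^{\varepsilon,\Lambda})\ge\sum_i\tfrac{\kappa_i^2}{4\pi}\ln\tfrac1\varepsilon-C$ as claimed.

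The computation is entirely elementary; the only point requiring care—and the place where the multi-vortex geometry enters—is verifying that all cross terms and the nonlinear term are $O(1)$ \emph{uniformly in both $\varepsilon$ and $\Lambda$}. Uniformity in $\Lambda$ is what will later allow us to fix $\Lambda$ large, as in Lemma~\ref{lem6}; it holds here because $\tilde{\omega}^{\varepsilon,\Lambda}$ takes values $\pm\varepsilon^{-2}$ regardless of $\Lambda$, so neither the self-energies nor the constants $|\kappa_i|F_i(1)$ depend on $\Lambda$. The separation of the $\bar x_i$ is precisely what keeps the off-diagonal interaction energies bounded rather than logarithmically divergent.
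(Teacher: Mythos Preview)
Your argument is correct for the stated inequality, but it differs from the paper's proof in a way that matters downstream. You test $\mathcal E$ against a single configuration where \emph{all} $k$ components are replaced simultaneously by small circular patches, which yields the total lower bound $\mathcal E(\omega^{\varepsilon,\Lambda})\ge\sum_i\frac{\kappa_i^2}{4\pi}\ln\frac{1}{\varepsilon}-C$ directly. The paper instead replaces the components \emph{one at a time}: for each fixed $i$ it tests against $\tilde\omega^\varepsilon=\sum_{j\neq i}\omega^{\varepsilon,\Lambda}_j+\frac{\mathrm{sgn}(\kappa_i)}{\varepsilon^2}\chi_{B_{\varepsilon\sqrt{|\kappa_i|/\pi}}(\bar x_i)}$, keeping the maximizer's other $k-1$ components. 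After cancelling the common pieces (and the cross terms, which are $O(1)$ exactly as you observe), this comparison gives the \emph{individual} lower bounds
\[
\mathcal E_i(\omega^{\varepsilon,\Lambda})\ge\frac{\kappa_i^2}{4\pi}\ln\frac{1}{\varepsilon}-C,\qquad i=1,\dots,k,
\]
which are then summed via \eqref{simple} to obtain \eqref{3-f}. Your route is shorter for the lemma as written; the paper's route buys the stronger componentwise estimate \eqref{3-6}, and this is precisely what is combined with \eqref{3-8} in the proof of Lemma~\ref{le3} to control each Lagrange multiplier $\mu^{\varepsilon,\Lambda}_i$ separately. With only the summed bound you would not immediately get \eqref{3-7} for each $i$.
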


\begin{proof}
We choose a test function $\tilde{\omega}^\varepsilon \in  \mathcal{A}^*_{\varepsilon,\Lambda}$ as follows $$\tilde{\omega}^\varepsilon=\sum_{1\le j\le k, j\not=i}^{k}\omega^{\varepsilon,\Lambda}_j     +\frac{\text{sgn}(\kappa_i)}{\varepsilon^2}\chi_{_{B_{\varepsilon\sqrt{|\kappa_i|/\pi}}(\bar{x}_i)}}.$$
Note that $\mathcal{E}(\omega^{\varepsilon,\Lambda})\ge \mathcal{E}(\tilde{\omega}^\varepsilon)$. By a simple calculation, we get \begin{equation}\label{3-6}
\mathcal{E}_i(\omega^{\varepsilon,\Lambda})\ge \frac{\kappa_i^2}{4\pi}\ln\frac{1}{\varepsilon}-C,\ \ i=1,\cdot\cdot\cdot,k,
\end{equation}
where the positive number $C$ does not depend on $\varepsilon$ and $\Lambda$.
 Combining \eqref{simple} and \eqref{3-6}, we get \eqref{3-f}. The proof is completed.
\end{proof}

We now turn to estimate the Lagrange multiplier $\mu^{\varepsilon,\Lambda}_{i} $.
\begin{lemma}\label{le3}
There exists $\varepsilon_1>0$ not depending on $\Lambda$, such that for every $\varepsilon\in (0,\varepsilon_1)$, we have
  \begin{equation}\label{3-7}
    \mu^{\varepsilon,\Lambda}_{i}\ge\frac{|\kappa_i|}{2\pi}\ln{\frac{1}{\varepsilon}} -|1-2\delta_1|f_i^{-1}(\Lambda)-C,\ \ i=1,\cdot\cdot\cdot,k.
  \end{equation}
\end{lemma}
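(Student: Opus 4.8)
The statement to prove is the lower bound
\[
\mu^{\varepsilon,\Lambda}_{i}\ge\frac{|\kappa_i|}{2\pi}\ln{\frac{1}{\varepsilon}} -|1-2\delta_1|f_i^{-1}(\Lambda)-C.
\]
The plan is to mimic the single-vortex argument in Lemma \ref{lem3}, but applied componentwise to each $\mathcal{E}_i$ rather than to the global functional $\mathcal{E}$. The key is that Lemma \ref{le2} gives not only the global lower bound \eqref{3-f} but, via the intermediate step \eqref{3-6}, the sharper componentwise bound $\mathcal{E}_i(\omega^{\varepsilon,\Lambda})\ge \frac{\kappa_i^2}{4\pi}\ln\frac{1}{\varepsilon}-C$. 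This componentwise energy lower bound is what I would feed into the Lagrange-multiplier estimate.

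Let me check how the argument reads. Write $\psi^{\varepsilon,\Lambda}_{i}=\mathrm{sgn}(\kappa_i)\mathcal{G}\omega^{\varepsilon,\Lambda}-\mu^{\varepsilon,\Lambda}_{i}$ as in \eqref{3-4}. Using the characterization \eqref{3-3} of $\omega^{\varepsilon,\Lambda}_i$ together with assumption (H2)$'$ applied to $f_i$ (with its own constant $\delta_1$, which I would take common to all $i$ after shrinking), I would estimate
\begin{equation*}
2\mathcal{E}_i(\omega^{\varepsilon,\Lambda})\le |1-2\delta_1|\,|\kappa_i|\,f_i^{-1}(\Lambda)+\int_D \mathrm{sgn}(\kappa_i)\omega^{\varepsilon,\Lambda}_i\left(\psi^{\varepsilon,\Lambda}_{i}-f_i^{-1}(\Lambda)\right)_+dx +|\kappa_i|\,\mu^{\varepsilon,\Lambda}_{i},
\end{equation*}
exactly paralleling \eqref{2-7}. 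The main work is then to show the middle integral is uniformly bounded in $\varepsilon$ and $\Lambda$. Setting $U^{\varepsilon,\Lambda}_i:=(\psi^{\varepsilon,\Lambda}_{i}-f_i^{-1}(\Lambda))_+$, I would note that $U^{\varepsilon,\Lambda}_i$ is supported in the patch region $\{\mathrm{sgn}(\kappa_i)\omega^{\varepsilon,\Lambda}_i=\Lambda\varepsilon^{-2}\}$, integrate by parts to get $\int|\nabla U^{\varepsilon,\Lambda}_i|^2=\int \mathrm{sgn}(\kappa_i)\omega^{\varepsilon,\Lambda}_i\,U^{\varepsilon,\Lambda}_i$, and then run the Hölder--Sobolev chain of \eqref{fffff} to conclude the integral is $O(1)$. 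Combining with the componentwise bound $\mathcal{E}_i(\omega^{\varepsilon,\Lambda})\ge \frac{\kappa_i^2}{4\pi}\ln\frac{1}{\varepsilon}-C$ and dividing by $|\kappa_i|$ yields \eqref{3-7}.

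The one subtlety, and the place I expect the main obstacle, is the boundary behavior needed for the integration by parts. In the single-vortex case one uses $\mu^{\varepsilon,\Lambda}\ge -f^{-1}(\Lambda)$ to guarantee $U^{\varepsilon,\Lambda}=0$ on $\partial D$. Here, because $\psi^{\varepsilon,\Lambda}_i$ is only defined through the constrained functional on $B_{r_0}(\bar{x}_i)$ and the cross-interaction terms from the other vortices contribute a bounded but nonzero amount to $\mathcal{G}\omega^{\varepsilon,\Lambda}$ on $\partial B_{r_0}(\bar{x}_i)$, the lower bound \eqref{3-5} carries the extra constant $C_0$, so $U^{\varepsilon,\Lambda}_i$ need not vanish on the full boundary of its support. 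I would handle this by working on $B_{r_0}(\bar{x}_i)$ and absorbing the interaction terms: since $G(x,y)$ is smooth away from the diagonal and the other supports stay uniformly far from $\bar x_i$, their contribution to $\psi^{\varepsilon,\Lambda}_i$ is uniformly bounded, so the threshold set $\{\psi^{\varepsilon,\Lambda}_i\ge f_i^{-1}(\Lambda)\}$ sits strictly inside $B_{r_0}(\bar{x}_i)$ once $\Lambda$ is large, giving $U^{\varepsilon,\Lambda}_i=0$ on the relevant boundary and legitimizing the integration by parts. The requirement that $\varepsilon<\varepsilon_1$ independent of $\Lambda$ enters precisely to keep these localizations valid uniformly.
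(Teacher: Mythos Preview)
Your overall strategy matches the paper's, but there is a genuine gap in the integration-by-parts step. You propose to work directly with $U^{\varepsilon,\Lambda}_i=(\psi^{\varepsilon,\Lambda}_i-f_i^{-1}(\Lambda))_+$ and argue that its support is confined to $B_{r_0}(\bar x_i)$, so that it vanishes on the relevant boundary and so that $\omega^{\varepsilon,\Lambda}$ may be replaced by $\omega^{\varepsilon,\Lambda}_i$ in the identity. This claim is not justified: since $\psi^{\varepsilon,\Lambda}_i=\mathrm{sgn}(\kappa_i)\mathcal{G}\omega^{\varepsilon,\Lambda}-\mu^{\varepsilon,\Lambda}_i$ is built from the \emph{full} potential, near another center $\bar x_j$ with $\mathrm{sgn}(\kappa_i)\kappa_j>0$ the function $\psi^{\varepsilon,\Lambda}_i$ can be very large, so $U^{\varepsilon,\Lambda}_i$ need not vanish outside $B_{r_0}(\bar x_i)$. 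Your proposed fix (``once $\Lambda$ is large'') is circular, since the lemma must hold uniformly in $\Lambda$ in order to feed into Lemma~\ref{le6}; and at this stage you do not yet know $\mu_i^{\varepsilon,\Lambda}$ is large, so you cannot localize the threshold set a priori.

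The paper circumvents this by introducing an auxiliary comparison function
\[
\tilde U^{\varepsilon,\Lambda}_i:=\bigl(\mathcal{G}|\omega^{\varepsilon,\Lambda}_i|-\mu^{\varepsilon,\Lambda}_i-f_i^{-1}(\Lambda)-C_0\bigr)_+,
\]
built from the single-component potential $\mathcal{G}|\omega^{\varepsilon,\Lambda}_i|$ rather than from $\psi^{\varepsilon,\Lambda}_i$. This function vanishes on $\partial D$ thanks to \eqref{3-5}, and since $-\Delta\mathcal{G}|\omega^{\varepsilon,\Lambda}_i|=|\omega^{\varepsilon,\Lambda}_i|$ exactly, the identity $\int_D|\nabla\tilde U^{\varepsilon,\Lambda}_i|^2=\int_D|\omega^{\varepsilon,\Lambda}_i|\,\tilde U^{\varepsilon,\Lambda}_i$ is clean on all of $D$. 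On the patch set the cross-interaction contribution is uniformly bounded, so $U^{\varepsilon,\Lambda}_i$ and $\tilde U^{\varepsilon,\Lambda}_i$ differ there by at most a constant and can be swapped at the cost of $O(1)$ terms. The H\"older--Sobolev chain is then run on $B_{r_0}(\bar x_i)$ using the embedding $W^{1,1}(B_{r_0}(\bar x_i))\hookrightarrow L^2(B_{r_0}(\bar x_i))$, which (unlike the zero-trace embedding available in the single-vortex proof) produces an extra lower-order term of size $C\varepsilon\sqrt{|\kappa_i|}\int_D|\omega^{\varepsilon,\Lambda}_i|\,\tilde U^{\varepsilon,\Lambda}_i$. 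It is the absorption of \emph{this} term into the left-hand side, not any localization argument, that forces the restriction $\varepsilon<\varepsilon_1$ with $\varepsilon_1$ independent of $\Lambda$.
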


\begin{proof}
Recalling \eqref{3-3} and the assumption (H2)$'$ on each $f_i$, we have
\begin{equation}\label{3-8}
\begin{split}
    2\mathcal{E}_i(\omega^{\varepsilon,\Lambda}) &=  \int_D\omega^{\varepsilon,\Lambda}_{i}\mathcal{G}\omega^{\varepsilon,\Lambda}_{i}dx
    -\frac{2}{\varepsilon^2}\int_DF_i(\varepsilon^2\text{sgn}(\kappa_i)\omega^{\varepsilon,\Lambda}_{i})dx \\
     & \le\int_D |\omega^{\varepsilon,\Lambda}_{i}| \psi^{\varepsilon,\Lambda}_{i}dx-2\delta_1\int_D |\omega^{\varepsilon,\Lambda}_{i}| f_i^{-1}(\varepsilon^2\text{sgn}(\kappa_i)\omega^{\varepsilon,\Lambda}_{i})dx+\mu^{\varepsilon,\Lambda}_{i}|\kappa_i|+C \\
     & \le |1-2\delta_1||\kappa_i|f_i^{-1}(\Lambda)+\int_D|\omega^{\varepsilon,\Lambda}_{i}|\left(\psi^{\varepsilon,\Lambda}_{i}-f_i^{-1}(\Lambda)\right)_+dx
     +\mu^{\varepsilon,\Lambda}_{i}|\kappa_i|+C.
\end{split}
\end{equation}
Let $C_0$ be as in Lemma \ref{le1}. Set
\begin{equation*}
  U^{\varepsilon,\Lambda}_{i}:=\left(\psi^{\varepsilon,\Lambda}_{i}-f_i^{-1}(\Lambda)\right)_+,\ \ \tilde{U} ^{\varepsilon,\Lambda}_{i}:=\left(\mathcal{G}|\omega^{\varepsilon,\Lambda}_{i}|-\mu^{\varepsilon,\Lambda}_{i}-f_i^{-1}(\Lambda)-C_0\right)_+.
\end{equation*}
Note that $-\Delta \mathcal{G}|\omega^{\varepsilon,\Lambda}_{i}|=|\omega^{\varepsilon,\Lambda}_{i}|$. We multiply $\tilde{U} ^{\varepsilon,\Lambda}_{i}$ on both sides of this equation and integrate by parts to get
\begin{equation}\label{3-9}
\begin{split}
   &\,\,\,\,\,\,\,\int_D{|\nabla \tilde{U}^{\varepsilon,\Lambda}_{i}|^2}dx\\
   &= \int_D |\omega^{\varepsilon,\Lambda}_{i}|\tilde{U}^{\varepsilon,\Lambda}_{i}dx \\
     & \le \int_D |\omega^{\varepsilon,\Lambda}_{i}|{U}^{\varepsilon,\Lambda}_{i}dx+C\\
     & \le \frac{\Lambda}{\varepsilon^2}|\{|\omega^{\varepsilon,\Lambda}_{i}(x)|={\Lambda}{\varepsilon^{-2}}\}|^{\frac{1}{2}}\left(\int_{B_{r_0}(\bar{x}_i)} |U^{\varepsilon,\Lambda}_{i}|^2dx\right)^{\frac{1}{2}}+C\\
     & \le \frac{C\Lambda }{\varepsilon^2}|\{|\omega^{\varepsilon,\Lambda}_{i}(x)|={\Lambda}{\varepsilon^{-2}}\}|^{\frac{1}{2}}\int_{B_{r_0}(\bar{x}_i)} (|\nabla U^{\varepsilon,\Lambda}_{i}|+|U^{\varepsilon,\Lambda}_{i}| )dx +C\\
     & \le \frac{C\Lambda }{\varepsilon^2}|\{\omega^{\varepsilon,\Lambda}_{i}(x)|={\Lambda}{\varepsilon^{-2}}\}|^{\frac{1}{2}}\int_{\{|\omega^{\varepsilon,\Lambda}_{i}(x)|={\Lambda}{\varepsilon^{-2}}\}}(|\nabla \tilde{U}^{\varepsilon,\Lambda}_{i}|+|\tilde{U}^{\varepsilon,\Lambda}_{i}|)dx +C\\
     & \le C|\kappa_i|\left(\int_D {|\nabla \tilde{U}^{\varepsilon,\Lambda}_{i}|^2 }dx\right)^{\frac{1}{2}}+C|\{|\omega^{\varepsilon,\Lambda}_{i}(x)|={\Lambda}{\varepsilon^{-2}}\}|^{\frac{1}{2}}\int_D |\omega^{\varepsilon,\Lambda}_{i}|\tilde{U}^{\varepsilon,\Lambda}_{i}dx+C\\
     & \le C|\kappa_i|\left(\int_D {|\nabla \tilde{U}^{\varepsilon,\Lambda}_{i}|^2 }dx\right)^{\frac{1}{2}}+C\varepsilon \sqrt{|\kappa_i|}\int_D |\omega^{\varepsilon,\Lambda}_{i}|\tilde{U}^{\varepsilon,\Lambda}_{i}dx+C,
\end{split}
\end{equation}
where we used H\"older's inequality and Sobolev embedding $W^{1,1}(B_{r_0}(\bar{x}_i))\hookrightarrow L^2(B_{r_0}(\bar{x}_i))$, and the positive constant $C$  does not depend on $\varepsilon$ and $\Lambda$. From \eqref{3-9}, we conclude that if $\varepsilon<1/(2C{|\kappa_i|^{1/2}})$, then $\int_D |\omega^{\varepsilon,\Lambda}_{i}|U^{\varepsilon,\Lambda}_{i}dx$ is uniformly bounded with respect to $\varepsilon$, $\Lambda$. Now \eqref{3-7} clearly follows from \eqref{3-6} and \eqref{3-8}. The proof is completed.
\end{proof}

The following lemma shows that $\psi^{\varepsilon,\Lambda}_{i}$ has a prior upper bound with respect to $\Lambda$.
\begin{lemma}\label{le5}
Let $\varepsilon_1$ be as in Lemma \ref{le3}. Then for every $\varepsilon\in (0,\varepsilon_1)$, we have
\begin{equation*}
  \psi^{\varepsilon,\Lambda}_{i}(x) \le |1-2\delta_1|f_i^{-1}(\Lambda)+\frac{|\kappa_i|}{4\pi} \ln \Lambda +C,\ \ \forall\,x\in B_{r_0}(\bar{x}_i),\ \ i=1,\cdot\cdot\cdot,k.
\end{equation*}
\end{lemma}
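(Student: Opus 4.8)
The plan is to transcribe the proof of Lemma~\ref{lem5} to the present multi-vortex setting, the only new feature being the mutual interaction of the $k$ vortices. Fix $i$ and $x\in B_{r_0}(\bar{x}_i)$. By \eqref{3-4} and $\omega^{\varepsilon,\Lambda}=\sum_{j=1}^k\omega^{\varepsilon,\Lambda}_j$ I would write
\[
\psi^{\varepsilon,\Lambda}_{i}(x)=\mbox{sgn}(\kappa_i)\mathcal{G}\omega^{\varepsilon,\Lambda}_i(x)+\mbox{sgn}(\kappa_i)\sum_{j\neq i}\mathcal{G}\omega^{\varepsilon,\Lambda}_j(x)-\mu^{\varepsilon,\Lambda}_{i},
\]
and estimate the self-interaction term and the cross terms separately.

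For the cross terms, recall that $\mbox{supp}(\omega^{\varepsilon,\Lambda}_j)\subset B_{r_0}(\bar{x}_j)$ with $\overline{B_{r_0}(\bar{x}_i)}\cap\overline{B_{r_0}(\bar{x}_j)}=\varnothing$ for $j\neq i$ and each ball compactly contained in $D$. Hence $|G(x,y)|\le C$ for $x\in B_{r_0}(\bar{x}_i)$ and $y\in B_{r_0}(\bar{x}_j)$, so that $|\mathcal{G}\omega^{\varepsilon,\Lambda}_j(x)|\le C|\kappa_j|$, and the whole sum $\sum_{j\neq i}\mbox{sgn}(\kappa_i)\mathcal{G}\omega^{\varepsilon,\Lambda}_j(x)$ is $O(1)$ uniformly in $\varepsilon$ and $\Lambda$, to be absorbed into $C$. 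For the self term, since $\mbox{sgn}(\kappa_i)\omega^{\varepsilon,\Lambda}_i\ge0$ satisfies $\int_D\mbox{sgn}(\kappa_i)\omega^{\varepsilon,\Lambda}_i\,dx=|\kappa_i|$ and $\mbox{sgn}(\kappa_i)\omega^{\varepsilon,\Lambda}_i\le\Lambda/\varepsilon^2$, and $h$ is bounded on $\overline{B_{r_0}(\bar{x}_i)}\times\overline{B_{r_0}(\bar{x}_i)}$, I would bound
\[
\mbox{sgn}(\kappa_i)\mathcal{G}\omega^{\varepsilon,\Lambda}_i(x)\le\frac{1}{2\pi}\int_D\ln\frac{1}{|x-y|}\,\mbox{sgn}(\kappa_i)\omega^{\varepsilon,\Lambda}_i(y)\,dy+C,
\]
and then apply the bathtub principle exactly as in Lemma~\ref{lem5}: the right-hand integral is maximized by placing the mass $|\kappa_i|$ at density $\Lambda/\varepsilon^2$ on a ball of radius $\varepsilon\sqrt{|\kappa_i|/(\Lambda\pi)}$, which after the same elementary computation yields $\frac{|\kappa_i|}{2\pi}\ln\frac{1}{\varepsilon}+\frac{|\kappa_i|}{4\pi}\ln\Lambda+C$.

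Combining the two estimates and invoking the lower bound $\mu^{\varepsilon,\Lambda}_{i}\ge\frac{|\kappa_i|}{2\pi}\ln\frac{1}{\varepsilon}-|1-2\delta_1|f_i^{-1}(\Lambda)-C$ from Lemma~\ref{le3}, the two $\frac{|\kappa_i|}{2\pi}\ln\frac{1}{\varepsilon}$ contributions cancel and the claimed inequality follows. I do not anticipate a genuine obstacle: the argument is a direct adaptation of the single-vortex Lemma~\ref{lem5}, and the only point deserving care is the uniform boundedness of the cross interaction terms, which is guaranteed by the fixed separation $\overline{B_{r_0}(\bar{x}_i)}\cap\overline{B_{r_0}(\bar{x}_j)}=\varnothing$ built into the admissible class $\mathcal{A}^*_{\varepsilon,\Lambda}$.
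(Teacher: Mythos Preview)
Your proposal is correct and follows essentially the same approach as the paper: bound the cross terms uniformly using the fixed separation of the balls, estimate the self-interaction by the bathtub/rearrangement argument to get $\frac{|\kappa_i|}{2\pi}\ln\frac{1}{\varepsilon}+\frac{|\kappa_i|}{4\pi}\ln\Lambda+C$, and then cancel the $\ln\frac{1}{\varepsilon}$ term against the lower bound on $\mu^{\varepsilon,\Lambda}_i$ from Lemma~\ref{le3}. The paper merely compresses your first two steps into the single inequality $\psi^{\varepsilon,\Lambda}_{i}(x)\le \frac{1}{2\pi}\int_D\ln\frac{1}{|x-y|}\,|\omega^{\varepsilon,\Lambda}_{i}|(y)\,dy-\mu^{\varepsilon,\Lambda}_{i}+C$.
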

\begin{proof}
For any $x\in B_{r_0}(\bar{x}_i)$, we have
\begin{equation*}
\begin{split}
   \psi^{\varepsilon,\Lambda}_{i}(x) &  \le \frac{1}{2\pi}\int_D\ln\frac{1}{|x-y|}|\omega^{\varepsilon,\Lambda}_{i}|(y)dy-\mu^{\varepsilon,\Lambda}_{i}+C \\
     & \le \frac{\Lambda}{2\pi\varepsilon^2}\int_{B_{{\varepsilon\sqrt{|\kappa_i|/{\Lambda\pi}}}}(0)}\ln\frac{1}{|y|}dy-\mu^{\varepsilon,\Lambda}_{i}+C \\
     & \le \frac{|\kappa_i|}{2\pi}\ln\frac{1}{\varepsilon}+\frac{|\kappa_i|}{4\pi}\ln\Lambda-\mu^{\varepsilon,\Lambda}_{i}+C \\
\end{split}
\end{equation*}
Hence, by Lemma \ref{le3}, we have
\begin{equation*}
  \psi^{\varepsilon,\Lambda}_{i}(x) \le|1-2\delta_1|f_i^{-1}(\Lambda)+ \frac{|\kappa_i|}{4\pi} \ln \Lambda +C.
\end{equation*}
The proof is completed.
\end{proof}

Using Lemma \ref{le5}, we can further deduce the following result.
\begin{lemma}\label{le6}
Let $\varepsilon_1$ be as in Lemma \ref{le3}. If $\Lambda>1$ is sufficiently large, which does not depend on $\varepsilon$, then for every $\varepsilon\in (0,\varepsilon_1)$, we have
\begin{equation*}
  \omega^{\varepsilon,\Lambda}_{i}=\frac{\text{sgn}(\kappa_i)}{\varepsilon^2}f_i\big(\psi^{\varepsilon,\Lambda}_{i}\big)\chi_{_{B_{r_0}(\bar{x}_i)}},\ \ \ i=1,\cdot\cdot\cdot,k.
\end{equation*}
\end{lemma}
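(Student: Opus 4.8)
The plan is to mirror the argument of Lemma \ref{lem6}, carried out componentwise for each index $i\in\{1,\dots,k\}$. In view of the representation \eqref{3-3}, it suffices to show that for $\Lambda$ sufficiently large (independently of $\varepsilon$) the ``patch part'' is negligible, namely that
\[
\big|\{x\in B_{r_0}(\bar{x}_i)\mid \psi^{\varepsilon,\Lambda}_{i}(x)\ge f_i^{-1}(\Lambda)\}\big|=0,\qquad i=1,\dots,k.
\]
Once this is established, the second term in \eqref{3-3} drops out, leaving $\text{sgn}(\kappa_i)\,\omega^{\varepsilon,\Lambda}_i=\varepsilon^{-2}f_i(\psi^{\varepsilon,\Lambda}_i)\chi_{B_{r_0}(\bar{x}_i)}$, and multiplying through by $\text{sgn}(\kappa_i)$ yields the asserted form.

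First I would observe that, by the definition of the patch part in \eqref{3-3}, on the set $\{x\in B_{r_0}(\bar{x}_i)\mid |\omega^{\varepsilon,\Lambda}_i(x)|=\Lambda\varepsilon^{-2}\}$ one has $\psi^{\varepsilon,\Lambda}_i\ge f_i^{-1}(\Lambda)$. Combining this lower bound with the a priori upper bound supplied by Lemma \ref{le5}, I obtain on that set the inequality
\[
\big(1-|1-2\delta_1|\big)f_i^{-1}(\Lambda)\le \frac{|\kappa_i|}{4\pi}\ln\Lambda+C,
\]
where $C$ does not depend on $\varepsilon$ or $\Lambda$. This is the exact analogue of \eqref{2-10}, now localized to the $i$-th ball.

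The decisive point, just as in Lemma \ref{lem6}, is that $\delta_1\in(0,1)$ forces the coefficient $1-|1-2\delta_1|$ to lie in $(0,1)$, so the left-hand side is a fixed positive multiple of $f_i^{-1}(\Lambda)$. Recalling that assumption (H3) is equivalent to \eqref{2-11}, i.e.\ $\tau f_i^{-1}(s)-\ln s\to+\infty$ for every $\tau>0$, I would apply this with $\tau$ chosen according to $\big(1-|1-2\delta_1|\big)$ and $|\kappa_i|$ to conclude that the displayed inequality is violated once $\Lambda$ exceeds some threshold $\Lambda_i$ depending only on $\kappa_i$, $\delta_1$ and $C$, but not on $\varepsilon$. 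Hence for such $\Lambda$ the corresponding patch set is null. Taking $\Lambda>\max_{1\le i\le k}\Lambda_i$ then handles all finitely many components simultaneously.

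I do not expect a serious obstacle here: the estimate is essentially a transcription of the single-vortex case, the only additional bookkeeping being the signs $\text{sgn}(\kappa_i)$, the restriction to the balls $B_{r_0}(\bar{x}_i)$, and the need to select one $\Lambda$ uniform over the finite index set. The essential analytic input — that each $f_i^{-1}$ grows superlogarithmically by (H3), thereby dominating the $\tfrac{|\kappa_i|}{4\pi}\ln\Lambda$ term — is already available through Lemma \ref{le5}, so the argument reduces to assembling that bound with the structural decomposition \eqref{3-3}.
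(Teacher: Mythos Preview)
Your proposal is correct and follows essentially the same route as the paper: combine the pointwise lower bound $\psi^{\varepsilon,\Lambda}_{i}\ge f_i^{-1}(\Lambda)$ on the patch set with the upper bound of Lemma~\ref{le5} to obtain $(1-|1-2\delta_1|)f_i^{-1}(\Lambda)\le \tfrac{|\kappa_i|}{4\pi}\ln\Lambda+C$, then invoke (H3) via \eqref{2-11} to force this set to be null for $\Lambda$ large independently of $\varepsilon$. The only extra remark you make---taking $\Lambda>\max_i\Lambda_i$ to handle all $k$ components at once---is implicit in the paper and is harmless.
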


\begin{proof}
  Notice that
\begin{equation*}
  \psi^{\varepsilon,\Lambda}_{i}\ge f_i^{-1}(\Lambda)\ \ \text{on}\ \  \{x\in D\mid|\omega^{\varepsilon,\Lambda}_{i}(x)|={\Lambda}{\varepsilon^{-2}}\}.
\end{equation*}
Combining this with Lemma \ref{le5}, we conclude that for some $C$ independent of $\varepsilon$ and $\Lambda$,
\begin{equation}\label{3-10}
(1-|1- 2\delta_1|)f_i^{-1}(\Lambda)\le \frac{|\kappa_i|}{4\pi}\ln \Lambda+C\ \ \text{on}\ \  \{x\in D\mid|\omega^{\varepsilon,\Lambda}_{i}(x)|={\Lambda}{\varepsilon^{-2}}\}.
\end{equation}
But the assumption (H3) on $f_i$ implies for each $\tau>0$
\begin{equation}\label{3-11}
  \lim_{s\to +\infty}(\tau f_i^{-1}(s)-\ln s)=+\infty.
\end{equation}
Combining \eqref{3-10} and \eqref{3-11}, we deduce that
  $$|\{x\in D\mid |\omega^{\varepsilon,\Lambda}_{i}(x)|={\Lambda}{\varepsilon^{-2}}\}|=0$$
  if $\Lambda$ is sufficiently large and $0<\varepsilon<\varepsilon_1$. The proof is thus completed.
\end{proof}

In the sequel $\Lambda$ is assumed to be fixed and large enough such that the conclusion in Lemma \ref{le6} holds true, and $C$ will be used to denote various positive numbers not depending on $\varepsilon$. To simplify notation we shall abbreviate $(\mathcal{A}^*_{\varepsilon,\Lambda},\omega^{\varepsilon,\Lambda}, \omega^{\varepsilon,\Lambda}_{i},\psi^{\varepsilon,\Lambda}_{i}, \mu^{\varepsilon,\Lambda}_{i})$ as $(\mathcal{A}^*_{\varepsilon},\omega^{\varepsilon},\omega^{\varepsilon}_i, \psi^{\varepsilon}_i, \mu^{\varepsilon}_i)$.

Note that for every $\varepsilon\in (0,\varepsilon_1)$, there holds
\begin{equation*}
  \mathcal{G}|\omega_i^{\varepsilon}|(x)\ge \psi^\varepsilon_i(x)-C\ge \mu^\varepsilon_i-C,\ \ \forall\, x\in \text{supp
  }(\omega^\varepsilon_i),\ \ i=1,\cdot\cdot\cdot,k,
\end{equation*}
 Combining this and Lemma \ref{le7}, we get
\begin{lemma}\label{le8}
There exists some $R_0>1$ independent of $\varepsilon$ such that
\begin{equation}
  diam(supp(\omega^{\varepsilon}_i))\le R_0\varepsilon,\ \ i=1,\cdots,k.
\end{equation}
\end{lemma}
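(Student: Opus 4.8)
The plan is to deduce Lemma \ref{le8} as an immediate application of the general estimate Lemma \ref{le7}, just as Lemma \ref{lem8} was obtained in the single-vortex case. Fix $i\in\{1,\dots,k\}$ and normalize by setting $\Gamma:=|\omega^\varepsilon_i|/|\kappa_i|$. Since $\int_D|\omega^\varepsilon_i|\,dx=|\kappa_i|$, the function $\Gamma$ is nonnegative with unit $L^1$ mass. Because $\Lambda$ is now fixed and $0\le|\omega^\varepsilon_i|\le\Lambda\varepsilon^{-2}$ a.e., we have $\|\Gamma\|_{L^\infty(D)}\le(\Lambda/|\kappa_i|)\varepsilon^{-2}$, so the integrability hypothesis of Lemma \ref{le7} holds with $p=+\infty$ and $C_1=\Lambda/|\kappa_i|$.

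Next I would convert the pointwise lower bound for $\mathcal{G}|\omega^\varepsilon_i|$ recorded just before the statement into the logarithmic-potential inequality that Lemma \ref{le7} requires. Writing $G(x,y)=-\frac1{2\pi}\ln|x-y|-h(x,y)$ and using that $h$ is bounded on $\overline D\times\overline D$, one has
\[
\mathcal{G}|\omega^\varepsilon_i|(x)\le\frac1{2\pi}\int_D\ln\frac{1}{|x-y|}|\omega^\varepsilon_i|(y)\,dy+C .
\]
Combining this with the displayed bound $\mathcal{G}|\omega^\varepsilon_i|(x)\ge\mu^\varepsilon_i-C$ valid for $x\in\mathrm{supp}(\omega^\varepsilon_i)$, together with the multiplier estimate $\mu^\varepsilon_i\ge\frac{|\kappa_i|}{2\pi}\ln\frac1\varepsilon-C$ from Lemma \ref{le3} (where the now-fixed term $|1-2\delta_1|f_i^{-1}(\Lambda)$ is absorbed into $C$), I would multiply by $2\pi$ and divide by $|\kappa_i|$ to arrive at
\[
\int_D\ln\frac{1}{|x-y|}\Gamma(y)\,dy\ge\ln\frac1\varepsilon-C,\qquad\forall\,x\in\mathrm{supp}(\omega^\varepsilon_i) .
\]
This is exactly the hypothesis of Lemma \ref{le7} with $\Omega=\mathrm{supp}(\omega^\varepsilon_i)$, $\epsilon=\varepsilon$, $A=0$, and $C_2=C$.

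With these verifications in place, Lemma \ref{le7} yields $\mathrm{diam}(\mathrm{supp}(\omega^\varepsilon_i))\le R\,\varepsilon^{1-2\cdot0}=R\varepsilon$ for some $R>1$ independent of $\varepsilon$, and taking $R_0:=\max_{1\le i\le k}R$ completes the proof. I do not expect any genuine difficulty here, since the entire analytic content is already packaged in Lemma \ref{le7}; the only points needing a little care are the normalization by $|\kappa_i|$ (so that possibly negative circulations are handled uniformly) and the observation that, because the balls $B_{r_0}(\bar x_j)$ are pairwise disjoint with positive separation, the interaction of the $j$-th blob with points of $B_{r_0}(\bar x_i)$ for $j\neq i$ contributes only a bounded term — this is precisely what legitimizes the constant $-C$ in the lower bound for $\mathcal{G}|\omega^\varepsilon_i|$ recorded above the lemma.
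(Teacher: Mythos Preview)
Your proposal is correct and follows exactly the paper's approach: the paper records the inequality $\mathcal{G}|\omega_i^{\varepsilon}|(x)\ge \psi^\varepsilon_i(x)-C\ge \mu^\varepsilon_i-C$ on $\mathrm{supp}(\omega^\varepsilon_i)$ and then simply invokes Lemma~\ref{le7}, which is precisely what you do after carefully verifying the hypotheses. Your write-up merely makes explicit the normalization by $|\kappa_i|$ and the absorption of the fixed $\Lambda$-dependent terms into the constant, details the paper leaves implicit.
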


With the estimates of $\text{supp}(\omega^{\varepsilon}_i)$, we can further determine the location of $\text{supp}(\omega^{\varepsilon}_i)$. To this end, we define the center of each $\omega^{\varepsilon}_i$ to be
\begin{equation*}
  x^{\varepsilon}_i=\frac{1}{\kappa_i}\int_D x\omega^{\varepsilon}_i(x)dx.
\end{equation*}

\begin{lemma}\label{le9}
$
  \lim_{\varepsilon\to 0^+}(x^{\varepsilon}_1,\cdots,x^{\varepsilon}_k)=(\bar{x}_1,\cdots,\bar{x}_k).
$
\end{lemma}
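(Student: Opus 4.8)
The plan is to mimic the single-vortex argument in Lemma \ref{le9} (the one proving $H(x^*)=\min H$) but now exploit that the full Kirchhoff--Routh function $\mathcal W_k$ governs the joint location of the $k$ centers, and that $(\bar x_1,\dots,\bar x_k)$ is the \emph{unique} minimizer of $\mathcal W_k$ inside $\overline{B_{r_0}(\bar x_1)}\times\cdots\times\overline{B_{r_0}(\bar x_k)}$. First I would extract from the compactness of $\overline{B_{r_0}(\bar x_i)}$ a subsequence along which $x_i^\varepsilon\to x_i^*\in\overline{B_{r_0}(\bar x_i)}$ for each $i$; since the limit is unique it suffices to show $(x_1^*,\dots,x_k^*)=(\bar x_1,\dots,\bar x_k)$, and by uniqueness of the minimizer it is enough to prove $\mathcal W_k(x_1^*,\dots,x_k^*)\le \mathcal W_k(y_1,\dots,y_k)$ for every competitor $(y_1,\dots,y_k)\in \overline{B_{r_0}(\bar x_1)}\times\cdots\times\overline{B_{r_0}(\bar x_k)}$.

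The key step is an asymptotic expansion of $\mathcal E(\omega^\varepsilon)$ that isolates the Kirchhoff--Routh energy. Writing $\mathcal G\omega^\varepsilon=\sum_j \mathcal G\omega_j^\varepsilon$ and splitting the kinetic energy into diagonal ($i=j$) and off-diagonal ($i\neq j$) contributions, I would argue, using Lemma \ref{le8} (each support has diameter $\le R_0\varepsilon$) together with $G(x,y)=-\tfrac1{2\pi}\ln|x-y|-h(x,y)$, that each self-interaction term contributes the large factor $\tfrac{\kappa_i^2}{4\pi}\ln\tfrac1\varepsilon$ plus the regular part $-\tfrac12\kappa_i^2 h(x_i^\varepsilon,x_i^\varepsilon)+o(1)$, while each cross term converges to $-\kappa_i\kappa_j G(x_i^\varepsilon,x_j^\varepsilon)$ because the supports shrink to the distinct points $\bar x_i\neq\bar x_j$ and $G$ is smooth off the diagonal. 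Since $\mathcal F_\varepsilon$ is translation invariant and bounded in the relevant sense, the net effect is
\begin{equation}\label{KRexp}
\mathcal E(\omega^\varepsilon)=\sum_{i=1}^k\frac{\kappa_i^2}{4\pi}\ln\frac1\varepsilon-\frac12\mathcal W_k(x_1^\varepsilon,\dots,x_k^\varepsilon)+O(1)+o(1),
\end{equation}
where the $O(1)$ comes from the self-energy profile (independent of the centers) and $o(1)\to0$ along the subsequence. The sign and normalization must be matched carefully against the definition \eqref{krf} of $\mathcal W_k$.

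The comparison then follows the usual translation trick. For an arbitrary competitor $(y_1,\dots,y_k)$ in the product of balls I would define a test vorticity $\tilde\omega^\varepsilon=\sum_i \omega_i^\varepsilon(\cdot-y_i+x_i^\varepsilon)$, which lies in $\mathcal A^*_{\varepsilon,\Lambda}$ for small $\varepsilon$ since each translated support is still well inside $B_{r_0}(\bar x_i)$; because $\mathcal F_\varepsilon$ and the self-energies are unchanged by translation, maximality $\mathcal E(\omega^\varepsilon)\ge\mathcal E(\tilde\omega^\varepsilon)$ reduces, via the expansion \eqref{KRexp}, to $\mathcal W_k(x_1^\varepsilon,\dots,x_k^\varepsilon)\le \mathcal W_k(y_1,\dots,y_k)+o(1)$. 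Passing to the limit and using continuity of $\mathcal W_k$ on the (compact, diagonal-free) product of balls gives $\mathcal W_k(x_1^*,\dots,x_k^*)\le \mathcal W_k(y_1,\dots,y_k)$, so $(x_1^*,\dots,x_k^*)$ minimizes $\mathcal W_k$ there and hence equals $(\bar x_1,\dots,\bar x_k)$. As the limit is independent of the subsequence, the full convergence $\lim_{\varepsilon\to0^+}(x_1^\varepsilon,\dots,x_k^\varepsilon)=(\bar x_1,\dots,\bar x_k)$ follows.

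I expect the main obstacle to be making the expansion \eqref{KRexp} rigorous, in particular controlling the self-interaction terms uniformly: one must show that the diagonal energy produces exactly $\tfrac{\kappa_i^2}{4\pi}\ln\tfrac1\varepsilon-\tfrac12\kappa_i^2 H(x_i^\varepsilon)+O(1)$ with the $O(1)$ \emph{independent} of the center location, so that it cancels in the comparison $\mathcal E(\omega^\varepsilon)\ge\mathcal E(\tilde\omega^\varepsilon)$. This requires the scaling/rearrangement control from Lemma \ref{lem10} and Corollary \ref{lem12} to pin down the self-energy profile, and the care that the cross terms genuinely converge to $-\kappa_i\kappa_j G(\bar x_i,\bar x_j)$ rather than merely staying bounded; a secondary subtlety is ensuring the translated competitor remains admissible, i.e.\ that $\mathrm{dist}(x_i^\varepsilon,\bar x_i)+R_0\varepsilon$ stays below $r_0$ for small $\varepsilon$, which is guaranteed once we know the centers are converging into the interior of the balls.
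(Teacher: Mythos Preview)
Your approach is the paper's: pass to a subsequence, use a translated competitor $\tilde\omega^\varepsilon=\sum_i\omega_i^\varepsilon(\cdot+x_i^\varepsilon-y_i)$, exploit maximality, and let $\varepsilon\to0$ to deduce that the limit centers minimize $\mathcal W_k$ on the product of closed balls, hence equal $(\bar x_1,\dots,\bar x_k)$ by uniqueness.

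The one thing to streamline: the expansion \eqref{KRexp} and the ``main obstacle'' you flag are unnecessary. In the difference $\mathcal E(\omega^\varepsilon)-\mathcal E(\tilde\omega^\varepsilon)$ the singular self-interactions $\iint\ln\frac{1}{|x-y|}\,\omega_i^\varepsilon\omega_i^\varepsilon$ and the $F_i$-terms cancel \emph{exactly} (they are translation invariant), so you never need to show the self-energy $O(1)$ is independent of the center, nor do you need Lemma \ref{lem10} or Corollary \ref{lem12} here. What remains is
\[
\sum_{i\neq j}\iint G(x,y)\,\omega_i^\varepsilon\omega_j^\varepsilon\,dxdy-\sum_i\iint h(x,y)\,\omega_i^\varepsilon\omega_i^\varepsilon\,dxdy
\ \ge\ \text{(same with $\tilde\omega^\varepsilon$)},
\]
and both sides converge, by Lemma \ref{le8} and continuity of $h$ on $\overline D\times\overline D$ and of $G$ away from the diagonal, to $-\mathcal W_k(x_1^*,\dots,x_k^*)$ and $-\mathcal W_k(y_1,\dots,y_k)$ respectively. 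Your admissibility worry is also not circular: take the competitor $(y_1,\dots,y_k)$ in the \emph{open} product $\prod_i B_{r_0}(\bar x_i)$; then $\mathrm{supp}(\tilde\omega_i^\varepsilon)\subset B_{R_0\varepsilon}(y_i)\subset B_{r_0}(\bar x_i)$ for small $\varepsilon$ regardless of where $x_i^\varepsilon$ sits.
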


\begin{proof}
Up to a subsequence we may assume that $$\lim_{\varepsilon\to 0^+}(x^{\varepsilon}_1,\cdots,x^{\varepsilon}_k)=(x^*_1,\cdots,x^*_k)\in\overline{B_{r_0}(\bar{x}_1)}\times\cdots\times\overline{B_{r_0}(\bar{x}_k)}.$$
  For any $(x_1,\cdots,x_k)\in B_{r_0}(\bar{x}_1)\times\cdots\times B_{r_0}(\bar{x}_k)$, set
\begin{equation*}
  \tilde{\omega}^\varepsilon=\sum_{i=1}^{k}\tilde{\omega}^\varepsilon_i,\,\,\,\, \tilde{\omega}^\varepsilon_i(\cdot)=\omega^\varepsilon_i(x^\varepsilon_i-x_i+\cdot).
\end{equation*}
Since $\omega^\varepsilon$ is a maximizer, we have $E(\omega^\varepsilon)\ge E(\tilde{\omega}^\varepsilon)$. Observe that
  \begin{equation*}
    \begin{split}
       \int_D\int_D \ln\frac{1}{|x-y|}\omega^\varepsilon_i(x)\omega^\varepsilon_i(y)dxdy & = \int_D\int_D \ln\frac{1}{|x-y|}\tilde{\omega}^\varepsilon_i(x)\tilde{\omega}^\varepsilon_i(y)dxdy, \\
       \int_DF(\varepsilon^2 \omega^\varepsilon_i)dx &=  \int_DF(\varepsilon^2 \tilde{\omega}^\varepsilon_i)dx.
    \end{split}
  \end{equation*}
  Hence we obtain
   \begin{equation*}
   \begin{split}
       &\sum_{{i\not=j},{1\le i,j\le k}} \,   \int_D\int_D G(x,y)\omega^\varepsilon(x)_i\omega^\varepsilon_j(y)dxdy-\sum_{i=1}^{k}\int_D\int_Dh(x,y)\omega^\varepsilon_i(x)\omega^\varepsilon_i(y)dxdy   \\
        &\,\, \ge \sum_{{i\not=j},{1\le i,j\le k}}\,  \int_D\int_DG(x,y)\tilde{\omega}^\varepsilon_i(x)\tilde{\omega}^\varepsilon_j(y)dxdy-\sum_{i=1}^{k}\int_D\int_Dh(x,y)\tilde{\omega}^\varepsilon_i(x)\tilde{\omega}^\varepsilon_i(y)dxdy.
   \end{split}
   \end{equation*}
  Letting $\varepsilon\to 0^+$, we obtain
  \begin{equation*}
    \mathcal{W}_k(x^*_i,\cdot\cdot\cdot,x^*_k)\le \mathcal{W}_k(x_1,\cdot\cdot\cdot,x_k).
  \end{equation*}
  Since $(\bar{x}_1,\cdot\cdot\cdot,\bar{x}_k)$ is the unique minimum point of $\mathcal{W}_k$ in $\overline{B_{r_0}(\bar{x}_1)}\times\cdot\cdot\cdot\times\overline{B_{r_0}(\bar{x}_k)}$, we must have $(x^*_i,\cdot\cdot\cdot,x^*_k)=(\bar{x}_1,\cdot\cdot\cdot,\bar{x}_k)$. The proof is completed.
\end{proof}

Combining Lemmas \ref{le8} and \ref{le9}, we immediately get the following result.
\begin{lemma}\label{10}
  If  $\varepsilon>0$ is sufficiently small, we have
\begin{equation*}
  dist\big(supp(\omega^\varepsilon_i),\partial B_{r_0}(\bar{x}_i)\big)>0,\ \ \ i=1,\cdot\cdot\cdot,k.
\end{equation*}
\end{lemma}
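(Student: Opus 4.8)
The plan is to derive this lemma directly from the diameter estimate of Lemma~\ref{le8} and the convergence of centers in Lemma~\ref{le9}, the only tool being the triangle inequality. Because the substantive analysis has already been carried out in those two lemmas, I do not anticipate any real obstacle here; what remains is essentially bookkeeping to translate ``small support with center near $\bar{x}_i$'' into ``support strictly inside $B_{r_0}(\bar{x}_i)$''.

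First I would note that, for each fixed $i$, the center $x^\varepsilon_i=\kappa_i^{-1}\int_D x\,\omega^\varepsilon_i(x)\,dx$ lies in the convex hull of $supp(\omega^\varepsilon_i)$, being a normalized weighted average of points of the support. Hence every $x\in supp(\omega^\varepsilon_i)$ obeys $|x-x^\varepsilon_i|\le diam(supp(\omega^\varepsilon_i))\le R_0\varepsilon$ by Lemma~\ref{le8}. Combining this with the convergence $x^\varepsilon_i\to\bar{x}_i$ from Lemma~\ref{le9}, the triangle inequality gives, for every $x\in supp(\omega^\varepsilon_i)$,
\[|x-\bar{x}_i|\le|x-x^\varepsilon_i|+|x^\varepsilon_i-\bar{x}_i|\le R_0\varepsilon+|x^\varepsilon_i-\bar{x}_i|.\]
Both terms on the right tend to zero as $\varepsilon\to0^+$, so the right-hand side drops below $r_0/2$ for all sufficiently small $\varepsilon$; since there are only finitely many indices, I would take the smallest of the resulting thresholds to secure a single bound valid for all $i=1,\dots,k$. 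This shows $supp(\omega^\varepsilon_i)\subset\overline{B_{r_0/2}(\bar{x}_i)}$ and therefore $dist\big(supp(\omega^\varepsilon_i),\partial B_{r_0}(\bar{x}_i)\big)\ge r_0/2>0$, as claimed.

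The single point deserving a word of care is that Lemma~\ref{le9} supplies the \emph{full} limit $x^\varepsilon_i\to\bar{x}_i$ rather than merely a subsequential one. This is legitimate because $(\bar{x}_1,\dots,\bar{x}_k)$ is the unique minimizer of $\mathcal{W}_k$ in $\overline{B_{r_0}(\bar{x}_1)}\times\cdots\times\overline{B_{r_0}(\bar{x}_k)}$, so every subsequential limit of the centers must equal it; consequently the whole family $\{x^\varepsilon_i\}$ converges and the estimate above holds as $\varepsilon\to0^+$ without passing to a subsequence.
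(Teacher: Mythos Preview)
Your proposal is correct and follows exactly the route the paper takes: the paper's own proof consists of the single sentence ``Combining Lemmas~\ref{le8} and~\ref{le9}, we immediately get the following result,'' and you have simply spelled out that combination via the triangle inequality. Your remark that $x^\varepsilon_i$ lies in the convex hull of $supp(\omega^\varepsilon_i)$ (because $\kappa_i^{-1}\omega^\varepsilon_i$ is a probability density) and your observation about full versus subsequential convergence are both valid refinements of what the paper leaves implicit.
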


We now turn to study the asymptotic shape of the optimal vortices. As before, let $\zeta^\varepsilon_i\in L^\infty\big(B_{R_0}(0)\big)$ be defined by
\begin{equation*}
  \zeta^\varepsilon_i(x)=sgn(\kappa_i)\varepsilon^2\omega^\varepsilon_i(x^\varepsilon_i+\varepsilon x),\ \ i=1,\cdot\cdot\cdot,k,
\end{equation*}
for $R_0>1$ as in Lemma \ref{le8}. We denote by $g^\varepsilon_i$ the symmetric radially nonincreasing Lebesgue-rearrangement of $\zeta^\varepsilon_i$ centered on $0$. The following result is a counterpart of Lemma \ref{lem10}, which determines the asymptotic nature of $\omega^\varepsilon_i$ in terms of its scaled version $\zeta^\varepsilon_i$.

\begin{lemma}\label{le10}
Let $i\in\{1,\cdots,k\}$. Then every accumulation point of the family $\{\zeta^{\varepsilon}_i:\varepsilon>0\}$ in the weak topology of $L^2\big(B_{R_0}(0)\big)$ must be a radially nonincreasing function.
\end{lemma}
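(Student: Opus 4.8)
The plan is to follow the proof of Lemma \ref{lem10} almost verbatim, the only genuinely new ingredient being that rearranging a single vortex must not disturb its interaction with the remaining $k-1$ vortices at leading order. Fix $i\in\{1,\cdots,k\}$ and, passing to a subsequence, assume $\zeta^\varepsilon_i\rightharpoonup\zeta^*_i$ and $g^\varepsilon_i\rightharpoonup g^*_i$ weakly in $L^2(B_{R_0}(0))$, where $R_0$ is from Lemma \ref{le8}. Riesz's rearrangement inequality applied to $\zeta^\varepsilon_i$ and its symmetric decreasing rearrangement $g^\varepsilon_i$ bounds the $\ln\frac{1}{|x-y|}$-quadratic form of $\zeta^\varepsilon_i$ by that of $g^\varepsilon_i$; since the operator $u\mapsto\int_{B_{R_0}(0)}\ln\frac{1}{|\cdot-y|}u(y)\,dy$ is compact on $L^2(B_{R_0}(0))$, these quadratic forms pass to the weak limit, yielding the same inequality with $\zeta^*_i,g^*_i$ in place of $\zeta^\varepsilon_i,g^\varepsilon_i$.

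For the reverse inequality I would build a competitor that rearranges only the $i$-th vortex. Let $\tilde{\omega}^\varepsilon_i$ be the unscaled symmetric decreasing rearrangement of $\omega^\varepsilon_i$ about $x^\varepsilon_i$ (the function whose scaled profile is $g^\varepsilon_i$), and set $\tilde{\omega}^\varepsilon=\tilde{\omega}^\varepsilon_i+\sum_{j\neq i}\omega^\varepsilon_j$. By Lemma \ref{le8} the support of $\tilde{\omega}^\varepsilon_i$ lies in $B_{R_0\varepsilon}(x^\varepsilon_i)$, hence in $B_{r_0}(\bar{x}_i)$ for small $\varepsilon$ by Lemma \ref{10}; since rearrangement preserves the sign constraint and $\int\tilde{\omega}^\varepsilon_i=\kappa_i$, we have $\tilde{\omega}^\varepsilon\in\mathcal{A}^*_\varepsilon$, and maximality gives $\mathcal{E}(\omega^\varepsilon)\ge\mathcal{E}(\tilde{\omega}^\varepsilon)$. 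Expanding the difference, the $\mathcal{F}_\varepsilon$ contributions cancel because each $F_i$ is invariant under rearrangement; the self-interaction of vortex $i$ reduces, after the change of variables $x=x^\varepsilon_i+\varepsilon x'$, to $\frac{1}{4\pi}\int\int\ln\frac{1}{|x-y|}(\zeta^\varepsilon_i\zeta^\varepsilon_i-g^\varepsilon_i g^\varepsilon_i)$ — the divergent $\ln(1/\varepsilon)$ contributions cancelling since $\zeta^\varepsilon_i$ and $g^\varepsilon_i$ carry the same mass — plus an $h$-term that tends to $0$ because $h$ is smooth and both supports shrink to $\bar{x}_i$.

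The decisive point is the cross terms $\sum_{j\neq i}\int\int G(x,y)(\omega^\varepsilon_i-\tilde{\omega}^\varepsilon_i)(x)\omega^\varepsilon_j(y)\,dxdy$. Since the symmetric rearrangement is centered at $x^\varepsilon_i=\kappa_i^{-1}\int x\,\omega^\varepsilon_i\,dx$, the difference $\omega^\varepsilon_i-\tilde{\omega}^\varepsilon_i$ has both vanishing total mass and vanishing first moment; as $G(\cdot,y)$ is smooth on $\mathrm{supp}(\omega^\varepsilon_i)$ uniformly for $y\in\mathrm{supp}(\omega^\varepsilon_j)$ (the two supports being separated and shrinking to the distinct points $\bar{x}_i,\bar{x}_j$), a second-order Taylor expansion of $G(\cdot,y)$ about $x^\varepsilon_i$ bounds each cross term by $O(\varepsilon^2)=o(1)$. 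Combining these estimates with $\mathcal{E}(\omega^\varepsilon)\ge\mathcal{E}(\tilde{\omega}^\varepsilon)$ gives the reverse inequality in the limit; equality therefore holds in the limiting Riesz inequality, and Lemma 3.2 of Burchard--Guo \cite{BG} produces a translation carrying $\zeta^*_i$ to $g^*_i$. Because $\int x\,\zeta^\varepsilon_i\,dx=0$ for every $\varepsilon$ and $x\in L^2(B_{R_0}(0))$, weak convergence gives $\int x\,\zeta^*_i\,dx=0$, so the translation is trivial and $\zeta^*_i=g^*_i$ is radially nonincreasing. The main obstacle is exactly this cross-term estimate: one must exploit the matched zeroth and first moments of $\omega^\varepsilon_i-\tilde{\omega}^\varepsilon_i$ together with the uniform separation of the supports to absorb the interaction energy into the $o(1)$ remainder, so that the scalar maximization decouples into the single-vortex comparison of Lemma \ref{lem10}; the subordinate technical point, passage of the singular logarithmic quadratic form to the weak limit, is handled by compactness of its integral operator exactly as before.
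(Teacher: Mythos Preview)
Your proof is correct and follows essentially the same route as the paper: Riesz inequality for one direction, a competitor $\tilde\omega^\varepsilon$ obtained by symmetrically rearranging only the $i$-th vortex about $x^\varepsilon_i$ for the other, then Burchard--Guo plus the centroid condition to pin down the translation. The one minor variation is that you control the cross terms by a second-order Taylor expansion exploiting the matched zeroth and first moments (yielding $O(\varepsilon^2)$), whereas the paper simply packages all $h$-terms, cross terms, and $\mathcal F_\varepsilon$-terms into remainders $\mathcal R^\varepsilon_1,\mathcal R^\varepsilon_2$ and observes that $\mathcal R^\varepsilon_1-\mathcal R^\varepsilon_2\to 0$ because both $\omega^\varepsilon_i$ and $\tilde\omega^\varepsilon_i$ have mass $\kappa_i$ and supports shrinking to $\bar x_i$---your estimate is sharper than needed but not a different idea.
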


\begin{proof}
  Up to a subsequence we may assume that $\zeta^\varepsilon_i\to \zeta^*_i$ and $g^\varepsilon_i\to g^*_i$ weakly in $L^2\big(B_{R_0}(0)\big)$ as $\varepsilon\to 0^+$. By Riesz's rearrangement inequality, we first have
  \begin{equation*}
    \int_{B_{R_0}(0)}\int_{B_{R_0}(0)}\ln\frac{1}{|x-y|}\zeta^\varepsilon_i(x)\zeta^\varepsilon_i(y)dxdy\le  \int_{B_{R_0}(0)}\int_{B_{R_0}(0)}\ln\frac{1}{|x-y|}g^\varepsilon_i(x)g^\varepsilon_i(y)dxdy.
  \end{equation*}
Thus
\begin{equation}\label{3-16}
  \int_{B_{R_0}(0)}\int_{B_{R_0}(0)}\ln\frac{1}{|x-y|}\zeta^*_i(x)\zeta^*_i(y)dxdy\le  \int_{B_{R_0}(0)}\int_{B_{R_0}(0)}\ln\frac{1}{|x-y|}g^*_i(x)g^*_i(y)dxdy.
\end{equation}
Let $\tilde{\omega}^\varepsilon_i$ be defined as
\[
\tilde{\omega}^\varepsilon_i(x)=\left\{
   \begin{array}{lll}
        \varepsilon^2g^\varepsilon_i\big(\varepsilon^{-1}(x-x^\varepsilon_i)\big) &    \text{if} & x\in B_{R_0\varepsilon}(x^\varepsilon_i), \\
         0                  &    \text{if} & x\in D\backslash B_{R_0\varepsilon}(x^\varepsilon_i).
    \end{array}
   \right.
\]
Let $\tilde{\omega}^\varepsilon=\sum_{1\le j\le k,j\not=i}^{k}\omega^\varepsilon_j+\tilde{\omega}^\varepsilon_i$.
A direct calculation then yields to,
\begin{equation*}
\begin{split}
      \mathcal{E}({\omega}^\varepsilon)= \frac{1}{4\pi}\int_{B_{R_0}(0)}\int_{B_{R_0}(0)}&\ln\frac{1}{|x-y|}\zeta^\varepsilon_i(x)\zeta^\varepsilon_i(y)dxdy+\sum_{j=1}^{k}\frac{\kappa_j^2}{4\pi}\ln\frac{1}{\varepsilon} \\
     &     +\sum_{j\not=i}^{k} \frac{1}{4\pi}\int_{B_{R_0}(0)}\int_{B_{R_0}(0)}\ln\frac{1}{|x-y|}\zeta^\varepsilon_j(x)\zeta^\varepsilon_j(y)dxdy+\mathcal{R}^\varepsilon_1,
\end{split}
\end{equation*}
and
\begin{equation*}
\begin{split}
      \mathcal{E}(\tilde{\omega}^\varepsilon)= \frac{1}{4\pi}\int_{B_{R_0}(0)}\int_{B_{R_0}(0)}&\ln\frac{1}{|x-y|}g^\varepsilon_i(x)g^\varepsilon_i(y)dxdy +\sum_{j=1}^{k}\frac{\kappa_j^2}{4\pi}\ln\frac{1}{\varepsilon}\\
     &     +\sum_{j\not=i}^{k} \frac{1}{4\pi}\int_{B_{R_0}(0)}\int_{B_{R_0}(0)}\ln\frac{1}{|x-y|}\zeta^\varepsilon_j(x)\zeta^\varepsilon_j(y)dxdy+\mathcal{R}^\varepsilon_2,
\end{split}
\end{equation*}
where
\begin{equation*}
  \lim_{\varepsilon\to 0^+}\mathcal{R}^\varepsilon_1=\lim_{\varepsilon\to 0^+}\mathcal{R}^\varepsilon_2\in \mathbb{R}.
\end{equation*}
Recalling that $\mathcal{E}(\tilde{\omega}^\varepsilon)\le \mathcal{E}({\omega}^\varepsilon)$, we conclude that
\begin{equation*}
  \int_{B_{R_0}(0)}\int_{B_{R_0}(0)}\ln\frac{1}{|x-y|}\zeta^*_i(x)\zeta^*_i(y)dxdy\ge  \int_{B_{R_0}(0)}\int_{B_{R_0}(0)}\ln\frac{1}{|x-y|}g^*_i(x)g^*_i(y)dxdy.
\end{equation*}
which together with $\eqref{3-16}$ yields to
\begin{equation*}
  \int_{B_{R_0}(0)}\int_{B_{R_0}(0)}\ln\frac{1}{|x-y|}\zeta^*_i(x)\zeta^*_i(y)dxdy= \int_{B_{R_0}(0)}\int_{B_{R_0}(0)}\ln\frac{1}{|x-y|}g^*_i(x)g^*_i(y)dxdy.
\end{equation*}
By Lemma 3.2 in Burchard--Guo \cite{BG}, we know that there exists a translation $\mathcal T$ in $\mathbb{R}^2$ such that $\mathcal T\zeta^*_i=g^*_i$. Taking into account
\begin{equation*}
  \int_{B_{R_0}(0)}x\zeta^*_i(x)dx=\int_{B_{R_0}(0)}xg^*_i(x)dx=0,
\end{equation*}
we obtain $\zeta^*_i=g^*_i$. The proof is thus completed.
\end{proof}

Now, we turn to study the limiting behavior of the corresponding stream functions $\psi_i^\varepsilon$.
We define the scaled versions of $\psi_i^\varepsilon$ as follows
\begin{equation*}
    \Psi_i^\varepsilon(y)=\psi_i^\varepsilon(x_i^\varepsilon+\varepsilon y), \ \ y\in D_i^\varepsilon:=\{y\in \mathbb{R}^2~|~x_i^\varepsilon+\varepsilon y\in D\}.
\end{equation*}
Thus, we have
\begin{equation}\label{3-30}
\begin{split}
   -\Delta \Psi_i^\varepsilon(y)  & = \zeta^\varepsilon_i(y)+\text{sgn}(\kappa_i)\sum_{1\le j\le k,j\not=i}^{k}\varepsilon^2\omega^\varepsilon_j(x^\varepsilon_i+\varepsilon y)\\
     & =f_i\big(\Psi^\varepsilon_i\big)\chi_{_{B_{r_0}(\bar{x}_i)}}(x^\varepsilon_i+\varepsilon  y)+\text{sgn}(\kappa_i)\sum_{1\le j\le k,j\not=i}^{k}\varepsilon^2\omega^\varepsilon_j(x^\varepsilon_i+\varepsilon y),\\
   \int_{D_i^\varepsilon}&f_i\big(\Psi^\varepsilon_i\big)dx =|\kappa_i|, \ \ i=1,\cdot\cdot\cdot,k.
\end{split}
\end{equation}
 Note that $\{\Psi^\varepsilon_i>0\}\subset B_{R_0}(0)$. As before, we now introduce the limiting function $U^{|\kappa_i|}: \mathbb{R}^2\to \mathbb{R}$ defined as the unique radially symmetric solution of the following elliptic problem
\begin{equation}\label{3-31}
  \begin{cases}
    -\Delta U^{|\kappa_i|}= f_i(U^{|\kappa_i|}),& \\
    \int_{\mathbb{R}^2}f_i(U^{|\kappa_i|})=|\kappa_i|.&
  \end{cases}
\end{equation}

Arguing as in the proof of Lemma \ref{lem11}, we can obtain the following result.
\begin{lemma}\label{le11}
  As $\varepsilon\to 0^+$, we have $\Psi_i^\varepsilon\to U^{|\kappa_i|}$ in $C^{1,\alpha}_{\text{loc}}(\mathbb{R}^2)$.
\end{lemma}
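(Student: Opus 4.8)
The plan is to mirror the proof of Lemma \ref{lem11}, the only genuinely new feature being the cross-interaction term $\text{sgn}(\kappa_i)\sum_{j\ne i}\varepsilon^2\omega^\varepsilon_j(x^\varepsilon_i+\varepsilon y)$ appearing on the right-hand side of \eqref{3-30}. First I would record the uniform bound: the definition of $\mathcal{A}^*_{\varepsilon,\Lambda}$ gives $0\le\zeta_i^\varepsilon=\varepsilon^2\,\text{sgn}(\kappa_i)\,\omega_i^\varepsilon(x_i^\varepsilon+\varepsilon\cdot)\le\Lambda$ on $B_{R_0}(0)$, with $\Lambda$ now fixed. This is exactly the $L^\infty$ bound that drives the compactness argument.

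The key step is to show that, on every fixed ball $B_R(0)$, the interaction term vanishes identically for all sufficiently small $\varepsilon$, so that $\Psi_i^\varepsilon$ solves the autonomous equation $-\Delta\Psi_i^\varepsilon=f_i(\Psi_i^\varepsilon)$ there. Indeed, by Lemma \ref{le8} the support of $\omega_j^\varepsilon$ lies in $B_{R_0\varepsilon}(x_j^\varepsilon)$, while by Lemma \ref{le9} the centers satisfy $x_i^\varepsilon\to\bar x_i$ and $x_j^\varepsilon\to\bar x_j$ with $\bar x_i\ne\bar x_j$; hence for $|y|\le R$ the point $x_i^\varepsilon+\varepsilon y$ stays at distance $\ge|\bar x_i-\bar x_j|/2>R_0\varepsilon$ from $x_j^\varepsilon$ once $\varepsilon$ is small, so $\omega_j^\varepsilon(x_i^\varepsilon+\varepsilon y)=0$ for every $j\ne i$. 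The same computation, together with Lemma \ref{10}, gives $\chi_{B_{r_0}(\bar x_i)}(x_i^\varepsilon+\varepsilon y)=1$ for $|y|\le R$ and small $\varepsilon$, so the characteristic-function factor in \eqref{3-30} is also harmless.

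With these reductions the argument becomes a repetition of Lemma \ref{lem11}: the $L^\infty$ bound on $\zeta_i^\varepsilon$ yields, via classical $W^{2,p}_{\text{loc}}$ estimates and the Sobolev embedding, compactness of $(\Psi_i^\varepsilon)$ in $C^{1,\alpha}_{\text{loc}}(\mathbb{R}^2)$; passing to a subsequence, $\Psi_i^\varepsilon\to\Psi_i$ in $C^{1,\alpha}_{\text{loc}}$ and $\zeta_i^\varepsilon\to\zeta_i^*$ weakly-star, and letting $R\to+\infty$ along a diagonal gives $-\Delta\Psi_i=f_i(\Psi_i)=\zeta_i^*$ in $\mathbb{R}^2$. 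Since $\{\Psi_i^\varepsilon>0\}\subset B_{R_0}(0)$ and $f_i$ is continuous, the normalization $\int_{D_i^\varepsilon}f_i(\Psi_i^\varepsilon)=|\kappa_i|$ passes to $\int_{\mathbb{R}^2}f_i(\Psi_i)=|\kappa_i|$. Finally Lemma \ref{le10} forces $\zeta_i^*$, hence $\Psi_i$, to be radial, so $\Psi_i$ coincides with the unique radial profile $U^{|\kappa_i|}$ of \eqref{3-31}; since the limit is independent of the subsequence, the full family converges.

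I expect the main (though mild) obstacle to be the localization step of the second paragraph: verifying that the interaction term is exactly zero on compact sets requires combining the support estimate (Lemma \ref{le8}), the convergence of centers to the distinct points $\bar x_i$ (Lemma \ref{le9}), and the confinement of each support away from $\partial B_{r_0}(\bar x_i)$ (Lemma \ref{10}). Once this separation is settled, the remainder is verbatim the single-vortex computation, and the only conceptual point—the radial symmetry of the limit together with uniqueness of $U^{|\kappa_i|}$—is inherited directly from Lemma \ref{le10} and \cite{SV}.
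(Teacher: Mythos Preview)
Your proposal is correct and follows exactly the approach the paper intends: the paper's own proof of Lemma~\ref{le11} consists of the single sentence ``Arguing as in the proof of Lemma~\ref{lem11}, we can obtain the following result,'' and you have supplied precisely those details, including the one genuinely new point---that the cross-interaction term $\text{sgn}(\kappa_i)\sum_{j\ne i}\varepsilon^2\omega^\varepsilon_j(x^\varepsilon_i+\varepsilon y)$ and the characteristic-function factor in \eqref{3-30} vanish on each fixed ball for small $\varepsilon$ by the support separation coming from Lemmas~\ref{le8}, \ref{le9}, and \ref{10}. There is nothing to add.
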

As before, we can now sharpen Lemma \ref{le11} as follows.
\begin{corollary}\label{le12}
  As $\varepsilon\to 0^+$, one has $\zeta_i^\varepsilon\to f_i(U^{|\kappa_i|})$ weakly star in $L^\infty(\mathbb{R}^2)$.
\end{corollary}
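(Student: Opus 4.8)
The plan is to follow the proof of Corollary \ref{lem12} almost verbatim, the only new feature being that we must keep track of the characteristic factor $\chi_{B_{r_0}(\bar{x}_i)}$ appearing in the representation of $\omega^\varepsilon_i$ furnished by Lemma \ref{le6}. First I would rescale that representation. Starting from $\omega^\varepsilon_i=\frac{\text{sgn}(\kappa_i)}{\varepsilon^2}f_i(\psi^\varepsilon_i)\chi_{B_{r_0}(\bar{x}_i)}$ and substituting $x=x^\varepsilon_i+\varepsilon y$, the definitions $\zeta^\varepsilon_i(y)=\text{sgn}(\kappa_i)\varepsilon^2\omega^\varepsilon_i(x^\varepsilon_i+\varepsilon y)$ and $\Psi^\varepsilon_i(y)=\psi^\varepsilon_i(x^\varepsilon_i+\varepsilon y)$, together with $\text{sgn}(\kappa_i)^2=1$, yield
\[
\zeta^\varepsilon_i(y)=f_i\big(\Psi^\varepsilon_i(y)\big)\,\chi_{B_{r_0}(\bar{x}_i)}(x^\varepsilon_i+\varepsilon y),\qquad y\in B_{R_0}(0).
\]

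Next I would dispose of the characteristic factor. For $y\in B_{R_0}(0)$ the argument $x^\varepsilon_i+\varepsilon y$ ranges over $B_{R_0\varepsilon}(x^\varepsilon_i)$. By Lemma \ref{le9} we have $x^\varepsilon_i\to\bar{x}_i$, while $R_0\varepsilon\to 0$; hence for all sufficiently small $\varepsilon$ the ball $B_{R_0\varepsilon}(x^\varepsilon_i)$ is contained in $B_{r_0}(\bar{x}_i)$, so that $\chi_{B_{r_0}(\bar{x}_i)}(x^\varepsilon_i+\varepsilon y)\equiv 1$ on $B_{R_0}(0)$. (Lemmas \ref{le8} and \ref{10} make this containment explicit, but it already follows from the convergence of the centers.) Consequently $\zeta^\varepsilon_i=f_i(\Psi^\varepsilon_i)$ on $B_{R_0}(0)$ for $\varepsilon$ small.

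The conclusion then comes from Lemma \ref{le11}, which gives $\Psi^\varepsilon_i\to U^{|\kappa_i|}$ in $C^{1,\alpha}_{\text{loc}}(\mathbb{R}^2)$, in particular uniformly on the compact set $\overline{B_{R_0}(0)}$. Since $f_i$ is continuous by (H1), it follows that $f_i(\Psi^\varepsilon_i)\to f_i(U^{|\kappa_i|})$ uniformly on $\overline{B_{R_0}(0)}$. Combining this with the identity of the previous paragraph, and recalling that both $\zeta^\varepsilon_i$ and $f_i(U^{|\kappa_i|})$ vanish outside $B_{R_0}(0)$ (this is the note $\{\Psi^\varepsilon_i>0\}\subset B_{R_0}(0)$ passed to the limit), I obtain $\zeta^\varepsilon_i\to f_i(U^{|\kappa_i|})$ uniformly, hence strongly in $L^\infty(\mathbb{R}^2)$, which in particular implies the asserted weak-star convergence.

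There is no substantial obstacle here; the only point requiring genuine care is the bookkeeping of the extra factor $\chi_{B_{r_0}(\bar{x}_i)}$, which is absent in the single-vortex setting of Corollary \ref{lem12}. Once the support containment $B_{R_0\varepsilon}(x^\varepsilon_i)\subset B_{r_0}(\bar{x}_i)$ is verified for small $\varepsilon$, the statement reduces immediately to the continuity of $f_i$ and the $C^{1,\alpha}_{\text{loc}}$ convergence already established in Lemma \ref{le11}.
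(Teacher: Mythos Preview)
Your proposal is correct and follows the same approach the paper intends: the paper gives no detailed proof of Corollary~\ref{le12}, merely indicating that it sharpens Lemma~\ref{le10} via Lemma~\ref{le11}, and your argument---rescaling the representation from Lemma~\ref{le6}, eliminating the characteristic factor via Lemma~\ref{le9} (or Lemma~\ref{10}), and invoking the $C^{1,\alpha}_{\text{loc}}$ convergence from Lemma~\ref{le11} together with the continuity of $f_i$---is exactly how one fills in the details. Your observation that the convergence is in fact uniform (hence strong in $L^\infty$) is a harmless strengthening of the stated weak-star conclusion.
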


Arguing similarly as in the proof of Lemma \ref{lem13}, we can obtain the following expansions.
\begin{lemma}\label{le13}
  The following asymptotic expansions hold as $\varepsilon\to 0^+$
\begin{align}
\label{3-17}  \mathcal{E}_i(\omega^\varepsilon) & =\frac{\kappa_i^2}{4\pi}\ln\frac{1}{\varepsilon}+O(1),\ \ i=1,\cdot\cdot\cdot,k,\\
\label{3-18}  \mu^\varepsilon_i & =\frac{|\kappa_i|}{2\pi}\ln\frac{1}{\varepsilon}+O(1),\ \ i=1,\cdot\cdot\cdot,k,\\
\label{3-19} \mathcal{E}(\omega^\varepsilon)&= \sum_{i=1}^{k}\frac{\kappa_i^2}{4\pi}\ln\frac{1}{\varepsilon}+O(1).
\end{align}
\end{lemma}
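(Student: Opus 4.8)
The plan is to repeat the proof of Lemma~\ref{lem13} component by component, the only genuinely new feature being the interaction terms between distinct vortices. I would first dispose of \eqref{3-17}, then bootstrap to \eqref{3-18} through the Euler--Lagrange identity, and finally read off \eqref{3-19} from the decomposition \eqref{simple}. Throughout, the rescaling $x=x_i^\varepsilon+\varepsilon y$ together with Lemma~\ref{le11} and Corollary~\ref{le12} is what converts local vorticity integrals into convergent integrals of the limiting profile $U^{|\kappa_i|}$, and the asymptotic separation of the supports (Lemmas~\ref{le8}, \ref{le9} and~\ref{10}) is what keeps the cross terms bounded.

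For \eqref{3-17}, the lower bound $\mathcal{E}_i(\omega^\varepsilon)\ge \frac{\kappa_i^2}{4\pi}\ln\frac1\varepsilon-C$ is already supplied by Lemma~\ref{le2}. For the matching upper bound I would use $F_i\ge0$ to get $\mathcal{E}_i(\omega^\varepsilon)\le E_i(\omega^\varepsilon)=\frac12\int_D\omega_i^\varepsilon\mathcal{G}\omega_i^\varepsilon\,dx$, then split $G=-\frac1{2\pi}\ln|x-y|-h$. The regular part $h$ is bounded on $\overline{B_{r_0}(\bar x_i)}\times\overline{B_{r_0}(\bar x_i)}$ and $\int_D|\omega_i^\varepsilon|=|\kappa_i|$, so it contributes $O(1)$; for the singular part I would pass to the symmetric decreasing rearrangement of $|\omega_i^\varepsilon|$ via Riesz's rearrangement inequality and then apply the bathtub principle exactly as in the first step of Lemma~\ref{lem13}, obtaining $\iint\ln\frac{1}{|x-y|}|\omega_i^\varepsilon(x)||\omega_i^\varepsilon(y)|\,dxdy\le\kappa_i^2\ln\frac1\varepsilon+C$. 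This gives $\mathcal{E}_i(\omega^\varepsilon)\le\frac{\kappa_i^2}{4\pi}\ln\frac1\varepsilon+C$ and hence \eqref{3-17}.

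The key step is \eqref{3-18}. Starting from $2\mathcal{E}_i(\omega^\varepsilon)=\int_D\omega_i^\varepsilon\mathcal{G}\omega_i^\varepsilon\,dx-\frac{2}{\varepsilon^2}\int_D F_i(\varepsilon^2\,\text{sgn}(\kappa_i)\omega_i^\varepsilon)\,dx$, I would substitute $\text{sgn}(\kappa_i)\mathcal{G}\omega^\varepsilon=\psi_i^\varepsilon+\mu_i^\varepsilon$ from \eqref{3-4}. Since $\omega_i^\varepsilon=\text{sgn}(\kappa_i)|\omega_i^\varepsilon|$ and $\int_D|\omega_i^\varepsilon|=|\kappa_i|$, this yields $\int_D\omega_i^\varepsilon\mathcal{G}\omega^\varepsilon\,dx=\int_D|\omega_i^\varepsilon|\psi_i^\varepsilon\,dx+|\kappa_i|\mu_i^\varepsilon$, while the discrepancy $\int_D\omega_i^\varepsilon\mathcal{G}\omega^\varepsilon\,dx-\int_D\omega_i^\varepsilon\mathcal{G}\omega_i^\varepsilon\,dx=\sum_{j\ne i}\int_D\omega_i^\varepsilon\mathcal{G}\omega_j^\varepsilon\,dx$ is precisely the interaction term. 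By Lemma~\ref{10} the supports are well separated, so $\mathcal{G}\omega_j^\varepsilon\to\kappa_jG(\bar x_i,\bar x_j)$ uniformly on $\text{supp}(\omega_i^\varepsilon)$ and each cross term tends to $\kappa_i\kappa_jG(\bar x_i,\bar x_j)=O(1)$. Finally, rescaling shows $\int_D|\omega_i^\varepsilon|\psi_i^\varepsilon\,dx=\int_{B_{R_0}(0)}f_i(\Psi_i^\varepsilon)\Psi_i^\varepsilon\,dy$ and $\frac{1}{\varepsilon^2}\int_D F_i(\varepsilon^2\text{sgn}(\kappa_i)\omega_i^\varepsilon)\,dx=\int_{B_{R_0}(0)}F_i(f_i(\Psi_i^\varepsilon))\,dy$, which by Lemma~\ref{le11} and Corollary~\ref{le12} converge to the finite constants $\int f_i(U^{|\kappa_i|})U^{|\kappa_i|}$ and $\int F_i(f_i(U^{|\kappa_i|}))$. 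Collecting terms gives $2\mathcal{E}_i(\omega^\varepsilon)=|\kappa_i|\mu_i^\varepsilon+O(1)$, and combining with \eqref{3-17} together with $\kappa_i^2=|\kappa_i|^2$ yields \eqref{3-18}.

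Finally, \eqref{3-19} follows by summing \eqref{3-17} over $i$ and invoking \eqref{simple}: $\mathcal{E}(\omega^\varepsilon)=\sum_{i=1}^k\mathcal{E}_i(\omega^\varepsilon)+O(1)=\sum_{i=1}^k\frac{\kappa_i^2}{4\pi}\ln\frac1\varepsilon+O(1)$. I expect the main obstacle, compared with the single-vortex Lemma~\ref{lem13}, to be exactly the control of these off-diagonal interaction integrals $\int_D\omega_i^\varepsilon\mathcal{G}\omega_j^\varepsilon\,dx$; once their $O(1)$ size is justified from the separation of supports (Lemma~\ref{10}) and the smoothness of $G$ off the diagonal, the remainder of the argument is a faithful repetition of the single-vortex estimates.
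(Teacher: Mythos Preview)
Your proposal is correct and follows the same three-step outline as the paper, which simply says ``arguing similarly as in the proof of Lemma~\ref{lem13}'': upper/lower bounds for \eqref{3-17}, the Euler--Lagrange identity for \eqref{3-18}, and \eqref{simple} for \eqref{3-19}, with the cross terms handled exactly as you describe. If anything you are more elaborate than necessary---for the $O(1)$ bound on $\int_D|\omega_i^\varepsilon|\psi_i^\varepsilon\,dx$ and the $F_i$-term, the crude estimates $0\le\psi_i^\varepsilon\le C$ on $\mathrm{supp}(\omega_i^\varepsilon)$ (Lemma~\ref{le5}) together with $|\mathrm{supp}(\omega_i^\varepsilon)|\le C\varepsilon^2$ (Lemma~\ref{le8}) already suffice, so invoking the rescaled limits of Lemma~\ref{le11} and Corollary~\ref{le12} is not strictly required.
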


\begin{proof}[Proof of Theorem \ref{multiple}]
It follows from the above lemmas and Theorem B.

\end{proof}


\phantom{s}
 \thispagestyle{empty}

\end{document}